\newtheorem{mthm}{Theorem}
\newtheorem{thm}{Theorem}[section]
 \newtheorem{cor}[thm]{Corollary}
 \newtheorem{lem}[thm]{Lemma}
 \newtheorem{prop}[thm]{Proposition}
 \theoremstyle{definition}
 \newtheorem{defn}[thm]{Definition}
 \theoremstyle{remark}
 \newtheorem{rem}[thm]{Remark}
 \newcommand{\C}{\mathbb{C} }
 \newcommand{\CC}{\widehat{\C} }
 \newcommand{\R}{\mathbb{R} }
 \newcommand{\D}{\mathbb{D} }
 \newcommand{\Z}{\mathbb{Z} }
 \newcommand{\Q}{\mathbb{Q} }
 \newcommand{\M}{\mathcal{M} }
 \newcommand{\Perp}{\perp \! \! \! \perp}
\begin{document}
 
\title[Constructing clusters with matings]{Constructing rational maps with cluster points using the mating operation}
\author{Thomas Sharland}
\address{University of Warwick, Coventry, CV4 7AL, UK}
\email{tomkhfc@hotmail.com}
\subjclass[2010]{Primary 37F10}
\date{\today}

\begin{abstract}
In this article, we show that all admissible rational maps with fixed or period two cluster cycles can be constructed by the mating of polynomials. We also investigate the polynomials which make up the matings that construct these rational maps. In the one cluster case, one of the polynomials must be an $n$-rabbit and in the two cluster case, one of the maps must be either $f$, a ``double rabbit'', or $g$, a secondary map which lies in the wake of the double rabbit $f$. There is also a very simple combinatorial way of classifiying the maps which must partner the aforementioned polynomials to create rational maps with cluster cycles. Finally, we also investigate the multiplicities of the shared matings arising from the matings in the paper.
\end{abstract}
\maketitle

\tableofcontents

\section{Introduction}

The study of holomorphic dynamical systems was first given serious study by Pierre Fatou \cite{Fatou:1919,Fatou:1920} and Gaston Julia \cite{Julia:1918} in the earlier part of the 20th Century. After laying relatively dormant for a number years, the subject was given a new lease of life due to the improvements in technology which allowed mathematicians to view the various sets under investigation. In the mid-1980s, Hubbard and Douady produced their now famous ``Orsay lecture notes'' \cite{DouadyHubbard:Orsay1,DouadyHubbard:Orsay2}, and since then the subject has grown from strength to strength.

This paper is a partner to \cite{Thurstoneq}, in which it was shown that, in certain cases, a very simple set of combinatorial data can classify (in the sense of Thurson equivalence) a rational map with a periodic cluster cycle. Like its partner paper, this paper is made up of results found in the author's PhD thesis \cite{Mythesis}. Essentially, the former paper showed the uniqueness part of the classification of maps with cluster cycles, whereas the current paper is concerned with the existence part. We will show that all realisable combinatorial data for rational maps can be realised by matings of polynomials and so, when combined with the results of \cite{Thurstoneq}, we will show that all maps with cluster cycles are matings.

\subsection{Definitions}

Let $f \colon \CC \to \CC$ be a rational map on the Riemann sphere, of degree at least 2. The Julia set $J(f)$ will be the closure of the set of repelling periodic points of $f$, and the Fatou set is the set $F(f) = \CC \setminus J(f)$. The connected components of $F(f)$ are called Fatou components. In the case where the critical orbits are periodic, we call the immediate basins of the (super)attracting orbit critical orbit Fatou components. We concern ourselves in the main with bicritical rational maps with marked critical points and such that the two critical points belong to the attracting basins of two disjoint periodic orbits of the same period. If $f$ is a polynomial, then we define the filled Julia set $K(f)$ to be the set of points that are not attracted by the superattracting fixed point at infinity. It is well-known that $\partial K(f) = J(f)$. %For ease of exposition, we will, unless specified otherwise, use the term \emph{rational map} to mean such a map, often with the marking itself left implicit unless an explicit marking is required.

\begin{defn}
	Let $F \colon \CC \to \CC$ be a bicritical rational map. Then a cluster point for $F$ is a point in $J(F)$ which is the endpoint of the angle $0$ internal rays of at least one critical orbit Fatou component from each of the two critical cycles. We will define a cluster to be the union of the cluster point and the Fatou components meeting at it. The period of the cluster will be the period of the cluster point. The star of a cluster will be the union of the cluster point and the associated $0$ internal rays, including the points on the critical orbit.
\end{defn}

A standard example of a (fixed) cluster cycle is the rational map formed by the mating of the rabbit with the airplane, see Figure~\ref{f:rabair}. Notice that, by our restriction to bicritical maps, the rational maps in this paper can have at most one cluster cycle.  

\begin{figure}[ht]
    \begin{center}
    \includegraphics[width=0.95\textwidth]{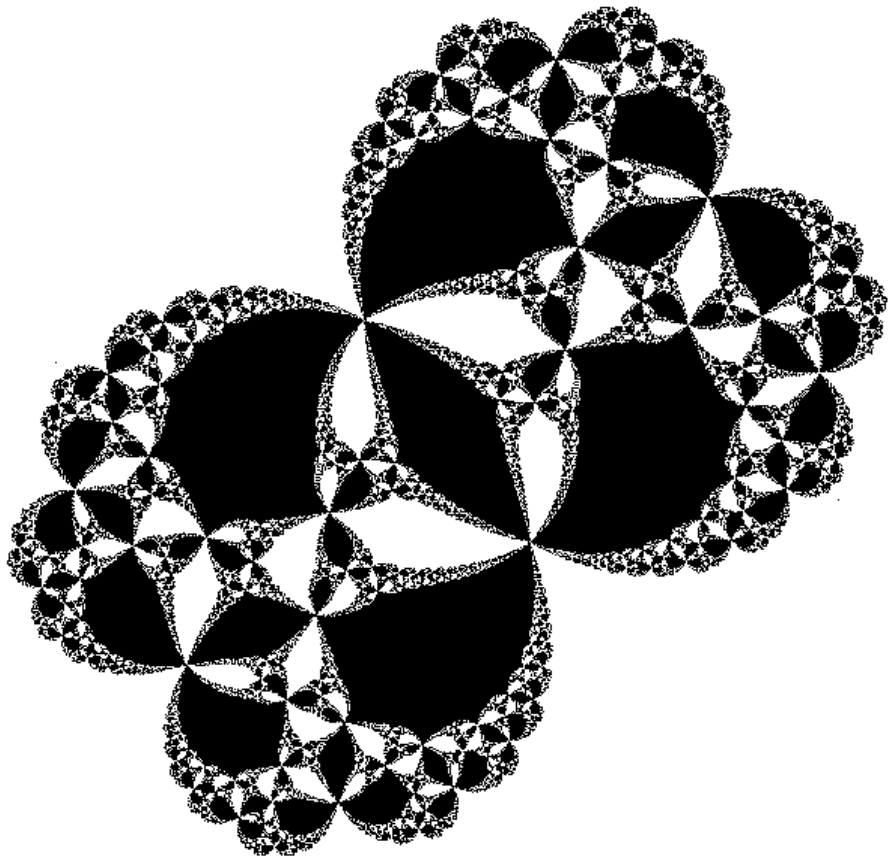}
    \end{center}
    \caption{A rational function with a fixed cluster point. This map is formed by the mating of the rabbit with the airplane.}
\label{f:rabair}
\end{figure}

We will define the critical displacement of a rational map with either a fixed cluster point or a period two cluster cycle. Informally, the critical displacement will give a (combinatorial) measure as to how far apart the two critical orbits are in the clusters. In actual fact, we require a different definition of this definition depending on whether we are studying the fixed case or the period two case. The reason for this is due to the result of the following lemma, which was proved in \cite{Thurstoneq}.

\begin{lem}\label{p:Marysresult}
	There does not exist a rational map $F$ with a period 2 cluster cycle such that the critical points are in the same cluster.
\end{lem}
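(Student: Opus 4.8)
The plan is to argue by contradiction. Suppose $F$ has a period $2$ cluster cycle $\{x_1,x_2\}$, with $F(x_1)=x_2$ and $F(x_2)=x_1$, and that both critical points lie in the cluster $C_1$ at $x_1$. Write $c_1\in U_0$ and $c_2\in V_0$ for the two critical points together with the critical orbit Fatou components containing them; the hypothesis means that the angle $0$ internal rays of $U_0$ and of $V_0$ both land at $x_1$. Since $F$ is bicritical of degree $n$, each critical point has local degree $n$, so $F\colon U_0\to U_1:=F(U_0)$ has degree exactly $n$, fully ramified over $c_1$; hence $F^{-1}(U_1)=U_0$. The angle $0$ ray of $U_1$ lands at $F(x_1)=x_2$, and in suitable B\"ottcher coordinates $F|_{U_0}$ is conjugate to $z\mapsto z^n$, so the $n$ internal rays of $U_0$ of angles $2\pi k/n$ $(k=0,\dots,n-1)$ are precisely the components of $F^{-1}(\text{angle }0\text{ ray of }U_1)$ inside $U_0$; their $n$ distinct landing points therefore exhaust $F^{-1}(x_2)$ and all lie on $\partial U_0$. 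Running the identical argument with $V_0$ in place of $U_0$ (again using $c_2\in V_0$, so that $F\colon V_0\to V_1$ has degree $n$) gives $F^{-1}(x_2)\subseteq\partial V_0$ as well.

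Consequently $\overline{U_0}\cap\overline{V_0}$ contains the set $F^{-1}(x_2)$, which has $n\geq 2$ points, one being $x_1$: the two critical orbit Fatou components of the cluster $C_1$ have closures meeting in at least two points. The next step is to turn this forced overlap into a contradiction. On $\partial U_0$ the points of $F^{-1}(x_2)$ appear in the cyclic order of the angles $2\pi k/n$, and likewise on $\partial V_0$; since $U_0$ and $V_0$ are Jordan domains with disjoint interiors, the planar configuration of two disks glued along these $n$ marked points is rigid, and one checks that the internal rays of $U_0$ and of $V_0$ landing at a common point of $F^{-1}(x_2)$ must be paired by $k\mapsto -k$. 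I would then feed this into the combinatorics of the cluster $C_1$: letting $N$ (necessarily even) be the common period of the two critical cycles, the $N$ critical orbit Fatou components meeting $x_1$ are permuted by the first return map $F^2$ as two $m$-cycles, one from each critical cycle, forcing the components of the two cycles to \emph{alternate} around $x_1$; moreover $F$ carries $C_1$ to the cluster $C_2$ at $x_2$, interchanging the roles of the critical points and their values. Tracking how $F$ moves, around the period $2$ cluster cycle, the rays implicated in the overlap then produces an incompatibility with this alternating structure. That the cluster period is exactly $2$ is essential: in the fixed case the analogous picture genuinely occurs — e.g.\ for $z\mapsto z^n$, whose two critical points lie in Fatou components meeting at the fixed cluster point $z=1$ — so the obstruction is a feature of the period $2$ setting.

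The step I expect to be the real obstacle is precisely this last one. Degree and Euler-characteristic bookkeeping alone will not suffice: the purely local picture at $x_1$ (and the induced subdivision of $\CC$) is entirely consistent with an $n$-point overlap $F^{-1}(x_2)\subseteq\partial U_0\cap\partial V_0$, so the contradiction must come from the interaction of the global dynamics — the full ramification of $U_0\to U_1$ and $V_0\to V_1$, the alternation of the two critical cycles about each cluster point, and the fact that $F$ swaps $C_1$ and $C_2$ — and making that interaction precise is the technical heart of the argument. A secondary point requiring explicit attention is that several ingredients above (internal rays land, every boundary point of $U_0$ is a landing point, $U_0$ and $V_0$ are Jordan domains) rest on local connectivity of $J(F)$, which holds for the postcritically finite maps in question but should be invoked rather than assumed.
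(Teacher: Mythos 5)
The paper does not actually prove Lemma~\ref{p:Marysresult} here; the statement is quoted as a result of the companion paper \cite{Thurstoneq}. So your attempt has to stand on its own, and as you yourself say, it does not get to the end.

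What you do establish is correct and is, in fact, the right geometric starting point: since $U_0$ and $V_0$ each contain a critical point, $F|_{U_0}$ and $F|_{V_0}$ are fully ramified, the angle-$0$ ray of $U_1=F(U_0)$ lands at $x_2$, and (granting Jordan-curve boundaries, which is fine in the hyperbolic postcritically finite setting) the $d$ internal rays of $U_0$ at angles $k/d$ land at the $d$ points of $F^{-1}(x_2)$; likewise for $V_0$. Hence $F^{-1}(x_2)\subseteq\partial U_0\cap\partial V_0$. You are also right that this alone cannot yield a contradiction: in the fixed cluster case the identical overlap occurs at $F^{-1}(x)$ and such maps exist (rabbit mated with airplane, $z\mapsto z^{d}$). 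But the route you gesture at from there --- the $k\mapsto -k$ pairing of internal rays, the alternation of the two critical cycles in the cyclic order, ``tracking how $F$ moves the implicated rays'' --- is not developed and does not obviously close up. You explicitly flag this step as the real obstacle; that flag is accurate, and without it there is no proof.

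The missing idea is to convert the overlap into a Thurston obstruction, exactly as in the paper's Proposition~\ref{p:adamproof} and Lemma~\ref{TanShish}: the overlap produces a \emph{Levy cycle}. Let $X_2$ be the star of $\mathcal{C}_2$, i.e.\ the union of $x_2$ with the angle-$0$ internal rays of the $2q$ critical orbit Fatou components meeting $x_2$, so that $X_2$ is a tree whose leaves are precisely the postcritical points in $\mathcal{C}_2$. Its preimage $F^{-1}(X_2)$ contains the star $X_1$ at $x_1$ \emph{and} a star $Y$ at each remaining point of $F^{-1}(x_2)$, and --- this is precisely your overlap, read on internal rays --- $X_1$ and $Y$ are attached to one another at \emph{both} critical points $\omega_1\in U_0$ and $\omega_2\in V_0$: the angle-$0$ ray of $U_0$ joins $\omega_1$ to $x_1$, the angle-$k/d$ ray joins $\omega_1$ to the corresponding preimage $y$, and similarly for $V_0$. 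Thus $F^{-1}(X_2)$ contains the loop $x_1-\omega_1-y-\omega_2-x_1$ and is not a tree. Now let $\gamma$ be the boundary of a thin tubular neighbourhood $N(X_2)$; it is a simple closed curve separating the $2q$ postcritical points of $\mathcal{C}_2$ from the $2q$ postcritical points of $\mathcal{C}_1$, hence non-peripheral. Because $F(X_2)=X_1$ is disjoint from $X_2$, one has $N(X_2)\cap F^{-1}(N(X_2))=\varnothing$ once the neighbourhood is thin enough, and every $\mathcal{C}_1$-postcritical point lies in $A:=F^{-1}(N(X_2))$ while $X_2$ and all $\mathcal{C}_2$-postcritical points lie in a single complementary disk $D'$; the remaining complementary disks contain no postcritical points. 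Since $A$ contains both critical points, $F|_{D'}\colon D'\to\CC\setminus N(X_2)$ is an unramified degree-$1$ covering, so $F\colon\partial D'\to\gamma$ is a homeomorphism, and $\partial D'$ and $\gamma$ cobound an annulus $D'\setminus N(X_2)$ free of postcritical points, hence are isotopic rel $P_F$. Therefore $\{\gamma\}$ is a Levy cycle and $F$ cannot be a rational map. Note that it is having \emph{both} $\omega_1,\omega_2$ in $\mathcal{C}_1$ that forces $X_1$ and $Y$ to be glued at two points, producing the loop; and it is the period being $\ge 2$ that forces $X_2\cap F^{-1}(X_2)=\varnothing$, which in turn makes $\gamma$ non-peripheral and the homotopy work --- in the fixed case $X\subset F^{-1}(X)$ and $\gamma$ is null-homotopic rel $P_F$, so no obstruction arises, which is why the analogous maps exist there.

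In short: your overlap lemma is correct and essential, but the actual contradiction needs the Levy cycle machinery, not an ad hoc analysis of ray pairings and cyclic alternation, and your proposal stops precisely where that argument begins.
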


The two definitions of critical displacement are the following. It is clear in both cases that the critical displacement is an odd integer, and that the definition is dependent on the labelling of the critical points of the map.

\begin{defn}
 Let $F$ be a rational map with a fixed cluster point. Label the endpoints of the star as follows. Let $e_0$ be the first critical point, and label the remaining arms in anticlockwise order by $e_1,e_2,\ldots,e_{2n-1}$. Then the second critical point is one of the $e_j$, and we call $j$ the critical displacement of the cluster of $F$. We denote the critical displacement by $\delta$.
\end{defn}

We will sometimes use the fact that the critical displacement can equally well be calculated as the combinatorial distance between the critical values, as opposed to the critical points. Also, notice that if one choice of marking for the critical points gives $\delta = k$, then the alternative marking will give a critical displacement of $\delta = -k$. Clearly, in light of Lemma~\ref{p:Marysresult}, any attempt to use the above definition to define the critical displacement in the period two case is impossible, hence we give the following definition for the period two case.

\begin{defn}
  Let $F$ be a rational map with a period two cluster cycle. Choose one of the critical points to be $c_1$, and label the cluster containing it to be $\mathcal{C}_{1}$. Then (by Lemma~\ref{p:Marysresult}) the other critical point $c_2$ is in the second cluster $\mathcal{C}_2$. We define the critical displacement $\delta$ as follows. Label the arms in the star of $\mathcal{C}_1$, starting with the arm with endpoint $c_1$, in anticlockwise order $\ell_0, \ell_1, \ldots, \ell_{2n-1}$. Then $F(c_2)$ is the endpoint of one of the $\ell_k$. This integer $k$ is the critical displacement, which we again denote by $\delta$.
\end{defn}

For the period two definition, we will later be using the fact that the critical displacement is also equal to the combinatorial distance between the $F(c_1)$ and $F^{\circ 2}(c_2)$. In contrast to the fixed case, there is not a neat symmetry to the two possible critical displacements a rational map can have. We will discuss this in further detail in Section~\ref{s:shared}.

\begin{defn}\label{d:CRN}
    Let $F \colon \CC \to \CC$ be a bicritical rational map and let $c$ be a cluster point of period $n$ of $F$. Then the combinatorial rotation number is defined as follows. The first return map, $F^{\circ n}$, maps the star of the cluster, $X_{F}$ to itself. Label the arms of the star (the $0$-internal rays) which belong to one of the critical orbits (it does not matter which) cyclically in anticlockwise order by $\ell_1$, $\ell_2, \ldots \ell_n$ (the initial choice of $\ell_1$ is not important). Then for each $k$, there exists $p$ such that $F^{\circ n}$ maps $\ell_{k}$ to $\ell_{k+p}$, subscripts taken modulo $n$. We then say the combinatorial rotation number is $\rho = \rho(F) = p/n$. 
\end{defn}

We can now define the combinatorial data of a rational map with a cluster cycle to be the pair $(\rho,\delta)$ where $\rho$ is the combinatorial rotation number and $\delta$ is the critical displacement. As shown in \cite{Thurstoneq}, the combinatorial data of a map is enough to classify it (in the sense of Thurston) in the fixed case for all degrees and the period two case if the map is quadratic.

\subsection{Matings}

We will be constructing rational maps using matings. The mating of polynomials was first mentioned by Douady in \cite{Douady:Bourbaki}. Informally, the construction allows us to take two complex polynomials $f$ and $g$ (along with their filled Julia sets $K(f)$ and $K(g)$) and paste them together to construct branched cover of the sphere. We will informally consider the mating operation to be a map from the ordered pairs $(f,g)$ of polynomials to the space of branched covers of the sphere. For further reading on this subject, see \cite{TanLei:1990,Milnor:mating,Wittner,TanShish}.

Let $f$ and $g$ be two monic degree $d$ polynomials defined on $\C$. In this paper, $f$ and $g$ will be unicritical but this is not needed in general. We define $\widetilde{\C} = \C \cup \{ \infty \cdot e^{2 \pi i t} : t \in \R / \Z \}$, the complex plane with the circle at infinity. We then extend the two polynomials to the circle at infinity by defining
\[
  f(\infty \cdot e^{2 \pi i t}) =  \infty \cdot e^{2 d \pi i t} \quad \text{and} \quad g(\infty \cdot e^{2 \pi i t}) =  \infty \cdot e^{2 d \pi i t}.
\]
Label this extended dynamical plane of $f$ by $\widetilde{\C}_f$ and the extended dynamical plane of $g$ by $\widetilde{\C}_g$. We then create a topological sphere by gluing the two extended planes together along the circle at infinity. More formally, we define
\[
 S^2_{f,g} = \widetilde{\C}_f \uplus \widetilde{\C}_g / \sim
\]
where $\sim$ is the relation which identifies the point $\infty \cdot e^{2 \pi i t} \in \widetilde{\C}_f$ with the point $\infty \cdot e^{- 2 \pi i t} \in \widetilde{\C}_g$. We now define a new map, the formal mating $f \uplus g$, on this new space $S^2_{f,g}$ by defining
\begin{align*}
	f \uplus g|_{\widetilde{\C}_f} \, =& \, f \quad \textrm{and} \\
	f \uplus g|_{\widetilde{\C}_g} \, =& \, g. 
\end{align*}

We now wish to define an alternative type of mating, called the \emph{topological mating}. First we require a brief discussion on the theory of external rays. Suppose the (filled) Julia set of the degree $d \geq 2$ monic polynomial $f \colon \CC \to \CC$ is connected. Recall, that by B\"{o}ttcher's theorem, there is a conformal isomorphism
\[
  \phi = \phi_f \colon \CC \setminus \overline{\D} \to \CC \setminus K(f)
\]
which can be chosen so that it conjugates $z \mapsto z^d$ on $\CC \setminus \overline{\D}$ with the map $f$ on $\CC \setminus K(f)$. 

\begin{defn}\label{d:extray}
 Consider the radial line $r_t = \{ r \exp{2 \pi i t} : r > 1 \} \subset \C \setminus \D$. Then we call the set
\[
  R_{f}(t) = \phi_f (r_t)
\]
 the external ray of angle $t$.
\end{defn}

If $K(f)$ (equivalently $J(f)$) is locally connected, we can define the Carath\'{e}odory semiconjugacy $\gamma \colon \R / \Z$ such that
\[
 \gamma(t) = \gamma_f(t) = \lim_{r \to 1} \phi_f(r \exp(2 \pi i t)).
\]
The point $\gamma_f(t) \in K(f)$ is called the landing point of the external ray $R_{f}(t)$. Before moving on, we introduce some terminology. Given a polynomial of the form $f_c(z) = z^d + c$ with locally connected Julia set, it is easy to see that the points $\beta_k = \gamma(k/(d-1))$ must be fixed. We call these the $\beta$-fixed point of $f_c$. There exists at most one other fixed point, which we will call the $\alpha$-fixed point of $f_c$. We can also construct parameter rays analogously in the parameter plane. Let $\Phi$ be the (unique) Riemann map  which gives a conformal isomorphism between $\CC \setminus \overline{\D}$ and $\CC \setminus \M_d$, with $\Phi (\infty) = \infty$ and so that $\Phi$ is asymptotic to the identity at infinity. Then the parameter ray of angle $\theta$ is the set
\[
  R_{\M_d}(t) = \Phi( \{ r\exp{2 \pi i t} : r > 1\}).
\]
The notion of landing of rays is defined similarly as to the case with external rays.

We now can define the topological mating of two monic degree $d$ polynomials $f_1$ and $f_2$, assuming they have locally connected Julia set. We first define the ray-equivalence relation $\sim$ on $S^2_{f,g}$. The equivalence relation $\sim_f$ on $\widehat{\C}_f$ is generated by $x \sim_f y$ if and only if $x,y \in \overline{R}_{f}(t)$ for some $t$. Notice that the closure of the external ray contains both the landing point and the point on the circle at infinity. Define a similar equivalence relation on $\widehat{\C}_g$. Then the equivalence relation $\sim$ will be generated $\sim_f$ on $\widehat{\C}_f$ and $\sim_g$ on $\widehat{\C}_g$. We denote the equivalence class of $x$ under this relation by $[x]$.

Denote the Carath\'{e}odory semiconjugacy derived from $f_{j}$ by $\gamma_{j}$, so that $\gamma_{j}$. We see that the ray equivalence relation restricts to an equivalence relation $\sim'$ on the disjoint union of $K(f_1)$ and $K(f_2)$ by
\[
    \gamma_{1}(t) \sim' \gamma_{2}(-t) \quad \textrm{ for each } \quad t \in \R / \Z.
\]
We define $K(f_{1}) \Perp K(f_{2})$ to be the quotient topological space $S^2_{f,g} / \sim$, where every equivalence class is
identified to a point. Making use of the fact that $\gamma_{j}(dt) = f_{j}(\gamma(t))$, we can piece together $f_{1} |_{K(f_{1})}$ and $f_{2} |_{K(f_{2})}$ to form a continuous map which we call $f_{1} \Perp f_{2}$. This is the topological mating. In nice cases, this quotient space $K_1 \Perp K_2$ is a topological sphere. We can think of the topological mating as the formal mating, where the external rays have been ``drawn tight'' using the ray equivalence relation. We remark that in this paper, we will only be considering the mating of (monic unicritical) hyperbolic polynomials of degree $d \geq 2$. By the results of \cite{Pilgrim:Jordan}, this means the Julia sets will be locally connected and so the topological mating will be well defined. Now suppose we have constructed the topological mating $f_1 \Perp f_2$. We say that a rational map $F$ is the geometric mating of the two monic polynomials $f_1$ and $f_2$ if there exists a topological conjugacy $h$ which is orientation preserving and holomorphic on $\stackrel{\circ}{K_1}$ and $\stackrel{\circ}{K_2}$ satisfying
\[
	h \circ (f_1 \Perp f_2) = F \circ h.
\]
In this case, we will write $F \cong f_1 \Perp f_2$. If there exist two (or more) rational maps satisfying the above, we say the mating is shared. 

\subsection{Thurston Equivalence}

We now discuss how the mating can be used to define a rational map on the Riemann sphere. As it stands the formal mating is a branched covering of the sphere. To provide a conformal structure to the topological sphere, we need to invoke the results of Thurston on the classification of branched covers. Recall a branched covering $F S^2 \to S^2$ is called postcritically finite if the postcritical set
\[
  P_F = \bigcup_{n > 0} F^{\circ n}(\{ z : z \text{ is a critical point of $F$} \}).
\]
is finite.

\begin{defn}
    Two postcritically finite orientation preserving branched self-coverings with labelled critical points $(F, P_{F})$ and $(G,P_{G})$ of $S^2$ are said to be \emph{Thurston equivalent} (alternatively, \emph{combinatorially equivalent} or just \emph{equivalent}) if there exists orientation preserving homeomorphisms $\phi_{1}, \phi_{2} \colon S^{2} \to S^{2}$ such that
    \begin{itemize}
     \item{$\phi_{1} |_{P_{F}} = \phi_{2} |_{P_{F}}$}
     \item{The following diagram commutes.
        \[
             \xymatrix{       (S^{2}, P_{F}) \ar[r]^{\phi_{1}} \ar[d]_{F}    & (S^{2}, P_{G}) \ar[d]^{G}
            \\
                (S^{2}, P_{F}) \ar[r]_{\phi_{2}}                       & (S^{2}, P_{G})}
        \]}
    \item{$\phi_{1}$ and $\phi_{2}$ are isotopic via homeomorphisms $\phi_{t}$, $t \in [0,1]$ satisfying $\phi_{0} |_{P_{F}} = \phi_{t} |_{P_{F}} = \phi_{1} |_{P_{F}}$ for each $t \in [0,1].$}
        \end{itemize}
When two branched coverings are equivalent in this way, we write $F \cong G$.
\end{defn}

 Let $\Gamma = \{ \gamma_{1}, \gamma_{2}, \ldots , \gamma_{n} \}$ be a collection of curves in $S^{2}$. If the $\gamma_{i} \in \Gamma$ are simple, closed, non-peripheral, disjoint and non-homotopic relative to $P_F$ then we say $\Gamma$ is a multicurve. We say the multicurve is $F$-stable if for any $\gamma_{i} \in \Gamma$, all the non-peripheral components of $F^{-1}(\gamma_{i})$ are homotopic rel $S^{2} \setminus P_{F}$ to elements of $\Gamma$. Given an $F$-stable multicurve, we can define a non-negative matrix $F_{\Gamma} = (f_{ij})_{n \times n}$ in the following natural way. For each $i,j$, let $\gamma_{i,j,\alpha}$ be the components (these are all simple, closed curves) of $F^{-1}(\gamma_{j})$ which are homotopic to $\gamma_{i}$ in $S^{2} \setminus P_{F}$. Now define
\[
	F_{\Gamma}(\gamma_{j}) = \sum_{i,\alpha} \frac{1}{\deg F |_{\gamma_{i,j,\alpha}} \colon \gamma_{i,j,\alpha} \to \gamma_{j}} \gamma_{i}
\] 
\noindent where $\deg$ denotes the degree of the map. By standard results on non-negative matrices (see \cite{Seneta}), this matrix $f_{ij}$ will have a leading non-negative eigenvalue $\lambda$. We write $\lambda(\Gamma)$ for the leading eigenvalue associated to the multicurve $\Gamma$.

\begin{defn}
 The multicurve $\Gamma$ is called a Thurston obstruction if $\lambda(\Gamma) \geq 1$.
\end{defn}

\begin{thm}[Thurston's Theorem]
 A post-critically finite branched covering $F \colon S^{2} \to S^{2}$ of degree $d \geq 2$ with hyperbolic orbifold is equivalent to a rational map $R$ on the Riemann sphere if and only if for any $F$-stable multicurve $\Gamma$ we have $\lambda(\Gamma,F)<1$. In that case the rational function $R$ is unique up to conjugation by an automorphism of the Riemann sphere $\CC$.
\end{thm}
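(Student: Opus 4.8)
The plan is to establish this via the Teichm\"{u}ller-theoretic argument of Thurston, in the form developed by Douady and Hubbard. Write $\mathcal{T} = \mathcal{T}(S^2, P_F)$ for the Teichm\"{u}ller space of the sphere marked by the postcritical set. Since $F(P_F) \subseteq P_F$ one has $F^{-1}(P_F) \supseteq P_F$, so the $F$-preimage of a complex structure on $(S^2, P_F)$ is again such a structure, and this induces a holomorphic \emph{pullback map} $\sigma_F \colon \mathcal{T} \to \mathcal{T}$. The first step, the \emph{translation principle}, is to observe that a fixed point $\tau$ of $\sigma_F$ is precisely a point whose complex structure is pulled back to itself by $F$; uniformizing, $F$ then becomes a holomorphic, hence rational, map $R$ with $F \cong R$, and conversely any rational representative yields a fixed point. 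This reduces the theorem to two statements: $\sigma_F$ has a fixed point if and only if $F$ carries no Thurston obstruction, and the fixed point (hence $R$) is unique.

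Second, I would record the contraction properties of $\sigma_F$. As a holomorphic self-map of the Kobayashi-hyperbolic space $\mathcal{T}$, $\sigma_F$ is weakly contracting for the Teichm\"{u}ller metric by the Schwarz--Pick lemma; the finer fact is that its co-derivative --- pushforward on the cotangent spaces of integrable quadratic differentials --- is \emph{strictly} norm-decreasing, and this is exactly where the hyperbolic orbifold hypothesis is used, to exclude the flat (Latt\`{e}s-type) exceptions in which the pushforward can be an isometry. Strict contraction immediately gives the uniqueness clause: $\sigma_F$ has at most one fixed point, so the rational map $R$, when it exists, is unique up to conjugation by a M\"{o}bius transformation. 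For the ``only if'' half of the equivalence, suppose $F \cong R$ with $R$ rational and let $\Gamma$ be any $F$-stable multicurve; realizing each $\gamma_i$ by a round annulus $A_i$ in $\CC \setminus P_F$, with the $A_i$ chosen disjoint, and relating the modulus of $A_i$ to the moduli of the essential components of $R^{-1}(A_i)$ via the Gr\"{o}tzsch inequality, one produces a positive vector all of whose coordinates are weakly decreased by $F_\Gamma$, and examining the equality case forces $\lambda(\Gamma, F) < 1$.

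Third comes the substantive ``if'' direction: assuming no $F$-stable multicurve is an obstruction, produce a fixed point of $\sigma_F$. Fix a basepoint $\tau_0$ and iterate, $\tau_n = \sigma_F^{\circ n}(\tau_0)$. If the orbit remains in a compact subset of $\mathcal{T}$, then some iterate $\sigma_F^{\circ N}$ is uniformly contracting on that set (again by the strict co-derivative estimate and hyperbolicity of the orbifold), and one extracts a fixed point as a limit. Hence everything turns on excluding the alternative, that $\tau_n$ leaves every compact set. By Mumford's compactness criterion this would force the hyperbolic lengths of certain simple closed geodesics of $(S^2, P_F)_{\tau_n}$ to tend to $0$; one organizes these pinching curves into an $F$-stable multicurve $\Gamma$ and, by analysing the rates at which their lengths --- equivalently, the moduli of the surrounding annuli --- evolve under pullback, once more via Gr\"{o}tzsch-type inequalities, shows that the Thurston matrix $F_\Gamma$ has leading eigenvalue at least $1$. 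This contradicts the no-obstruction hypothesis, so the orbit is bounded, $\sigma_F$ has its unique fixed point, and $F$ is equivalent to a rational map, unique up to conjugation.

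The main obstacle is clearly this last step: controlling the degeneration of the Teichm\"{u}ller orbit and extracting from it a genuine obstruction with eigenvalue $\geq 1$ demands delicate length-function and extremal-length bookkeeping near the boundary of moduli space. It is also the point at which the hyperbolic orbifold hypothesis must be handled with care, since without it the Euclidean exceptional maps defeat the strict contraction estimates that underpin both the existence and the uniqueness halves of the proof.
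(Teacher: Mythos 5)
The paper does not prove this statement: it is quoted as Thurston's theorem, and the reader is referred to Douady and Hubbard \cite{DouadyHubbard:1993} for the proof. Your sketch is a faithful high-level outline of exactly that Teichm\"{u}ller-theoretic pullback argument (translation principle, strict contraction of $\sigma_F$ under the hyperbolic-orbifold hypothesis, Gr\"{o}tzsch/Mumford-compactness analysis of degenerating orbits producing an obstruction with $\lambda \geq 1$), so it matches the cited source rather than anything internal to this paper.
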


The requirement of $F$ having a hyperbolic orbifold in the above theorem will not concern us in this paper, since all of our branched coverings will have hyperbolic orbifolds. Indeed, if a branched covering $F$ has non-hyperbolic orbifold, then it must have $P_F \leq 4$ and the pre-image of the post-critical set must be contained in the union of the postcritical set and the set of critical points of $F$. The interested reader should refer to \cite{DouadyHubbard:1993} for a proof and discussion of the above result. Though Thurston's theorem is a very strong result, it has some problems in applications, namely it is difficult to check in general for Thurston obstructions.

\begin{defn}
    A multicurve $\Gamma = \{ \gamma_{1}, \gamma_{2}, \ldots , \gamma_{n} \}$ is a Levy cycle if for each $i =1,\ldots,n$, the curve $\gamma_{i-1}$ (or $\gamma_{n}$ if $i = 1$) is homotopic to some component $\gamma_{i}'$ of $F^{-1}(\gamma_{i})$ (rel $P_{F}$) and the map $F \colon \gamma_{i}' \to \gamma_{i}$ is a homeomorphism.
\end{defn}

The following result, the culmination of work by Rees, Shishikura and Tan Lei, greatly simplifies the search for Thurston obstructions in the bicritical case.

\begin{prop}
    In the bicritical case, $F$ has a Levy cycle if and only if it has a Thurston obstruction.
\end{prop}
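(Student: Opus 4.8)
\emph{Proof proposal.} The two implications have completely different characters: one is a formal observation, the other is the substantive theorem of Rees, Shishikura and Tan Lei, and I would treat them separately.

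\smallskip
\noindent\textbf{Levy cycle $\Rightarrow$ Thurston obstruction.} This direction is standard. Let $\Gamma=\{\gamma_1,\dots,\gamma_n\}$ be a Levy cycle and let $\widehat\Gamma\supseteq\Gamma$ be an $F$-stable multicurve containing (a sub-multicurve homotopic to) $\Gamma$, obtained by the usual completion procedure. Because $\Gamma$ is a Levy cycle, for every $i$ the curve $\gamma_{i-1}$ is homotopic rel $P_F$ to a component $\gamma_i'$ of $F^{-1}(\gamma_i)$ with $\deg(F|_{\gamma_i'})=1$, so the coefficient of $\gamma_{i-1}$ in $F_{\widehat\Gamma}(\gamma_i)$ is at least $1$. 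Hence the principal submatrix of $F_{\widehat\Gamma}$ on $\gamma_1,\dots,\gamma_n$ dominates, entry by entry, the permutation matrix of an $n$-cycle, whose spectral radius is $1$; by monotonicity of the Perron eigenvalue of a non-negative matrix under entrywise domination and under passing to principal submatrices, $\lambda(\widehat\Gamma)\ge 1$, so $\widehat\Gamma$ is a Thurston obstruction.

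\smallskip
\noindent\textbf{Thurston obstruction $\Rightarrow$ Levy cycle.} Here the plan has three stages. First, \emph{reduce}: given a Thurston obstruction, replace it by an $F$-stable multicurve $\Gamma$ whose transition matrix $F_\Gamma$ is irreducible with leading eigenvalue $\lambda=\lambda(\Gamma)\ge 1$, by restricting to a strongly connected component of the transition digraph realising the maximal eigenvalue (checking that the restricted multicurve can still be taken $F$-stable, or quoting the standard reduction to an irreducible obstruction), and let $v=(v_i)>0$ be its Perron eigenvector. Second, \emph{bring in bicriticality via Riemann--Hurwitz}: if $\gamma\subset S^2\setminus P_F$ is a simple closed curve bounding disks $D,D'$, then each of the only two critical values of $F$ lies in exactly one of $D,D'$, and an Euler-characteristic count on $F^{-1}(D)$ and $F^{-1}(D')$ shows that all but a tightly bounded number of the components of $F^{-1}(\gamma)$ are mapped to $\gamma$ by a homeomorphism; for instance, if $D$ contains no critical value then $F^{-1}(D)$ is a disjoint union of disks, each mapped homeomorphically, and every component of $F^{-1}(\gamma)$ has degree $1$. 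Third, \emph{extract the Levy cycle}: argue that if $\Gamma$ contained no Levy cycle, then every directed cycle in the transition digraph would be forced to traverse an edge arising from a preimage component of degree $\ge 2$, and that the bookkeeping of stage two together with the positivity of $v$ makes this incompatible with $\lambda\ge 1$; it then follows that some directed cycle of $\Gamma$ consists entirely of homeomorphic transitions, and relabelling the curves around it exhibits a Levy cycle.

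\smallskip
\noindent\textbf{The main obstacle.} The difficulty is concentrated in the third stage, and specifically in the interface between the Riemann--Hurwitz bookkeeping and the eigenvalue estimate. The naive hope --- that the product of the transition coefficients around a homeomorphism-free directed cycle is $<1$, forcing $\lambda<1$ at once --- fails, because a single entry $(F_\Gamma)_{ij}=\sum_\alpha 1/\deg(F|_{\gamma_{i,j,\alpha}})$ is a sum of reciprocals of positive integers and can exceed $1$ even when all the degrees are $\ge 2$ (for example $\frac12+\frac12$). What must be used instead is the finer fact that a curve occurring as a degree $\ge 2$ preimage of another ``consumes'' one of the two critical values, in a way that cannot recur unboundedly often along a backward orbit --- and this is exactly the point where bicriticality is indispensable, the statement being false once there are three or more critical values. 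Making this quantitative, whether by weighting each curve by the number of postcritical points it encloses or by an annulus/Gr\"otzsch argument that tracks precisely when the modulus inequality along the $F$-stable multicurve is strict, is the technical heart of the theorem; for the purposes of this paper we may take it as established by the cited work of Rees, Shishikura and Tan Lei.
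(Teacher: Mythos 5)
The paper offers no proof of this proposition at all: it is stated bare and attributed to ``the culmination of work by Rees, Shishikura and Tan Lei,'' with the substantive direction simply cited. Your proposal therefore does strictly more than the paper. The forward direction (Levy cycle $\Rightarrow$ obstruction) is correct as you write it, and is the standard argument: the Levy cycle sits inside some $F$-stable completion, the corresponding principal submatrix dominates a cyclic permutation matrix entrywise, and monotonicity of the Perron eigenvalue gives $\lambda\ge 1$. One small point worth flagging if you wanted to make this airtight: you should justify that the ``usual completion procedure'' actually terminates in a finite $F$-stable multicurve (or, alternatively, avoid it by arguing directly with the sequence of preimage multicurves), since that is where the domination argument is applied.

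For the reverse direction you correctly identify it as the genuine theorem, you isolate the precise place where the naive ``every cycle sees a degree $\ge 2$ preimage, hence $\lambda<1$'' argument fails (the entry $\tfrac12+\tfrac12$ counterexample is exactly the right thing to notice), and you correctly locate bicriticality as the hypothesis that makes the bookkeeping close up. You then defer to the cited works rather than execute the argument, which is the same move the paper makes. So: your easy direction is right, your diagnosis of the hard direction is insightful and accurate, and your decision to cite rather than reprove matches the paper's own treatment. If you ever want to complete stage three yourself, the cleanest route in the literature is the annulus/Gr\"otzsch-inequality version, where one shows that along an irreducible obstruction the inequality $\mathrm{mod}\,A_i \le \sum_j (F_\Gamma)_{ij}\,\mathrm{mod}\,A_j$ must eventually be strict unless some returning cycle of preimage components is entirely of degree one --- which is the Levy cycle; the bicritical hypothesis is what forces the strictness whenever a degree $\ge 2$ component appears.
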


A related result in \cite{TanLei:1990} is the following. 

\begin{thm}\label{t:Tanequivalences}
	Let  $f_{1} , f_{2}$ be monic unicritical polynomials of the form $f_{i}(z) = z^d + c_i$ and with $\alpha$-fixed points labelled as $\alpha_{1}$ and $\alpha_{2}$ respectively. Then the following are equivalent.
		\begin{enumerate}
			\item{$F$ has a (good) Levy cycle $\Gamma = \{ \gamma_{1},\ldots,\gamma_{n} \}$.}
			\item{$F$ has a ray-equivalence class $\tau$ containing closed loops and two distinct fixed points.}
			\item{$[\alpha_{1}] = [\alpha_{2}]$}
			\item{$f_{1}$ and $f_{2}$ are in conjugate limbs of the degree $d$ multibrot set.}
		\end{enumerate}
\end{thm}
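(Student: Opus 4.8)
The plan is to prove the three equivalences $(1)\Leftrightarrow(2)$, $(2)\Leftrightarrow(3)$ and $(3)\Leftrightarrow(4)$ separately, since each has its own flavour, and to organise everything around a single bookkeeping fact: the fixed points of the formal mating $f_1\uplus f_2$ on $S^2_{f,g}$ are easy to enumerate. Each $f_i$ contributes its $\beta$-fixed point $\beta_i=\gamma_i(0)$ and its $\alpha$-fixed point $\alpha_i$, plus the repelling fixed point on the circle at infinity, which is glued into the class of the $\beta$-points. Since the only angle-$0$ ray fixed by $z\mapsto z^d$ is $R_{f_i}(0)$, the class $[\beta_1]=[\beta_2]$ is a single point and contains no loop. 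Hence the only ray-equivalence class that can possibly contain two distinct fixed points, or a loop together with a fixed point, is one containing $\alpha_1$ and $\alpha_2$; this is the engine behind $(2)\Leftrightarrow(3)$, and one direction ($(2)\Rightarrow(3)$) is then immediate.

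For $(3)\Leftrightarrow(4)$ I would use the combinatorial description of the limbs of $\M_d$. For $c$ in a limb, the $\alpha$-fixed point of $f_c$ is the landing point of a finite cyclic family of external rays whose angles form a rotation set with rotation number $p/q$, $q\ge 2$, and the limb of $\M_d$ containing $c$ is determined by, and determines, that rotation set (its characteristic angles). Under the gluing, $\gamma_1(t)$ is identified with $\gamma_2(-t)$, so $[\alpha_1]=[\alpha_2]$ iff some ray landing at $\alpha_1$ has angle $t$ with $-t$ an angle of a ray landing at $\alpha_2$; a short argument with rotation sets (the negative of a rotation set of rotation number $p/q$ is again one, of rotation number $-p/q=(q-p)/q$) upgrades this single coincidence to the statement that the whole rotation set at $\alpha_2$ is the image under $t\mapsto -t$ of the one at $\alpha_1$, which is exactly the condition that the two limbs are complex conjugate. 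For $(3)\Rightarrow(2)$, given $[\alpha_1]=[\alpha_2]$ one must exhibit a loop: at least two rays land at each $\alpha_i$, and the bipartite graph obtained by alternately following $f_1$-rays through $\alpha_1$ and $f_2$-rays through $\alpha_2$ closes up into a cycle precisely because the involution $t\mapsto -t$ interacts with a rotation set of period $q\ge 2$ in a way that forbids the graph from being a tree.

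The remaining and hardest step is $(1)\Leftrightarrow(2)$, converting between the purely topological notion of a (good) Levy cycle in $S^2\setminus P_F$ and the analytic/combinatorial notion of a ray-equivalence class carrying a loop. For $(2)\Rightarrow(1)$ the idea is to thicken the offending ray-equivalence class $\tau$ to a regular neighbourhood and take a core curve $\gamma$ of an annular piece surrounding the loop; because $\tau$ is invariant under the formal mating and the loop contains none of the critical points of $F$ (the critical points land in critical-orbit Fatou components, never in the $\alpha$-class), some component of $F^{-1}(\gamma)$ maps homeomorphically onto $\gamma$ and is homotopic to it, or to the next curve of a finite cycle of such annuli, producing a good Levy cycle. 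For $(1)\Rightarrow(2)$ one reverses this: a Levy cycle is in particular a Thurston obstruction, and by the preceding proposition (Levy cycle $\Leftrightarrow$ Thurston obstruction in the bicritical case) together with the Rees--Shishikura--Tan analysis of matings, any obstruction in this setting is carried by a ray-equivalence class, which must then be the essential loop sought in $(2)$; concretely, iterating a Levy annulus and using that the Julia sets are locally connected (Pilgrim's theorem, in the hyperbolic case to which we restrict) forces the annuli to shrink onto a ray-equivalence class, which therefore contains the homotopically essential curve.

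I expect the genuine obstacle to be this last equivalence $(1)\Leftrightarrow(2)$: the thickening construction and the control of preimage components are delicate, and making the ``annuli shrink onto a ray class'' argument precise is exactly the content of the cited work of Rees, Shishikura and Tan Lei. My plan is therefore to quote the preceding proposition and those results wherever possible, to work throughout in the hyperbolic unicritical setting so that all external rays land and the quotient $K_1\Perp K_2$ behaves well, and to keep the new content confined to the rotation-set bookkeeping in $(2)\Leftrightarrow(3)\Leftrightarrow(4)$ and to the verification that the loop in the $\alpha$-class avoids $P_F$ so that the Levy-cycle homeomorphism condition holds.
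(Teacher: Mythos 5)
The paper does not prove Theorem~\ref{t:Tanequivalences}: it quotes it from Tan Lei's work (\cite{TanLei:1990}) and uses it as a black box, so there is no in-paper argument to compare against. Judged on its own terms, your outline follows the standard route and the overall decomposition is sensible, but a few points need attention. First, the fixed-point bookkeeping is stated for $d=2$ only: for general $d$ each $f_i$ has $d-1$ $\beta$-fixed points $\gamma_i(k/(d-1))$, the map $t\mapsto dt$ fixes $d-1$ angles, and there are $d-1$ fixed points on the circle at infinity; the resulting $\beta$-ray classes are paths $\beta_1^{(k)}$---(fixed ray)---(circle point)---(fixed ray)---$\beta_2^{(d-1-k)}$, which \emph{do} contain two distinct fixed points of $F$. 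So your sentence ``the only ray-equivalence class that can possibly contain two distinct fixed points is one containing $\alpha_1$ and $\alpha_2$'' is false as written; the correct statement is that only the $\alpha$-class can carry a \emph{loop}, and condition~(2) demands both a loop and two fixed points. Second, for $(3)\Rightarrow(2)$ the bipartite-graph language is more than you need: once $[\alpha_1]=[\alpha_2]$ and the rotation sets are mutual negatives, every one of the $q\ge 2$ rays at $\alpha_1$ is identified with a ray at $\alpha_2$, and any two such edges already close a loop. Third, and most seriously, the implication $(1)\Rightarrow(2)$ is exactly the analytic core of Tan Lei's theorem; appealing to ``the Rees--Shishikura--Tan analysis of matings'' to carry it out is essentially invoking the theorem being proved. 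The preceding proposition (Levy cycle $\Leftrightarrow$ Thurston obstruction in the bicritical case) is an independent input and may be cited freely, but the step from an abstract obstruction to a ray-equivalence class carrying it must be reproduced (via the pullback and modulus estimates in \cite{TanLei:1990} or its sequels), not quoted. You flag this gap honestly, which is to your credit, but as it stands the proposal is an outline of Tan Lei's proof rather than a self-contained argument.
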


This result significantly reduces the work required to check whether the mating of two unicritical monic polynomials is obstructed, since it suffices by condition 3 to check that the ray classes of $[\alpha_1]$ and $[\alpha_2]$ are disjoint. We finish with a slight generalisation of Theorem~\ref{t:Tanequivalences}, which we will use in the next section.

\begin{lem}\label{TanShish}
	Let $F$ be a mating. Let $[x]$ be a periodic ray class such that $[x]$ contains a closed loop. Then each boundary curve of a tubular neighbourhood of $[x]$ generates a Levy cycle.
\end{lem}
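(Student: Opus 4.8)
The plan is to imitate the tubular--neighbourhood argument by which Tan Lei and Shishikura deduce condition~(1) from condition~(2) of Theorem~\ref{t:Tanequivalences}, but with ``a ray class containing two distinct fixed points'' weakened to ``a periodic ray class''. Write $F = f_1 \Perp f_2$, let $\tau_0 = [x]$ have period $n$, and set $\tau_k = F^{\circ k}([x])$ for $k = 0, \ldots, n-1$, so the $\tau_k$ are pairwise disjoint ray classes with $F(\tau_k) = \tau_{k+1}$, indices modulo $n$. Because $F$ carries external rays to external rays (multiplying angles by $d$) and the equator to itself by $\zeta \mapsto \zeta^d$, each $F|_{\tau_k} \colon \tau_k \to \tau_{k+1}$ is a map of finite $1$--complexes; in fact it is a \emph{covering}, since $F$ is a local homeomorphism at every point of $\tau_k$: the interior of an external ray carries no periodic point, the equator carries $\zeta \mapsto \zeta^d$, and a periodic landing point is a repelling --- hence non--critical --- periodic point. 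In particular, as $\tau_0$ contains a closed loop, so does each $\tau_k$.

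Next I would fix the ambient geometry. Since $F$ is a mating of hyperbolic postcritically finite polynomials, its postcritical set $P_F$ lies in $\stackrel{\circ}{K_1} \cup \stackrel{\circ}{K_2}$, while each $\tau_k$ --- being a union of external rays, their landing points on the Julia sets, and points of the equator --- is disjoint from $\stackrel{\circ}{K_1} \cup \stackrel{\circ}{K_2}$, hence from $P_F$. We may therefore choose pairwise disjoint compact regular neighbourhoods $N_0, \ldots, N_{n-1}$ of $\tau_0, \ldots, \tau_{n-1}$, each disjoint from $P_F$ and each a planar surface deformation retracting onto the corresponding graph; $\partial N_k$ is then a disjoint union of simple closed curves in $S^2 \setminus P_F$.

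The heart of the argument is a consistent choice of loops. A boundary component $\gamma$ of $N_0$ is freely homotopic inside $N_0$ to a closed edge--path in $\tau_0$; if that path is inessential in $S^2 \setminus P_F$ then $\gamma$ is peripheral and there is nothing to prove, so assume it is essential. Using the covering property of the $F|_{\tau_k}$ and the finiteness of the set of homotopy classes of edge--paths involved, a pigeonhole argument around the cycle $\tau_0 \to \tau_1 \to \cdots \to \tau_0$ should produce closed loops $L_k \subset \tau_k$, with $L_0$ representing the class of $\gamma$, such that $F(L_k) = L_{k+1}$ and $F \colon L_k \to L_{k+1}$ has degree one (possibly after replacing $n$ by a multiple of itself, which is harmless). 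For each $k$ let $\gamma_k$ be a boundary component of $N_{k-1}$ running parallel to $L_{k-1}$, so $\gamma_1 = \gamma$. Since $F|_{L_{k-1}}$ is a homeomorphism onto $L_k$, shrinking the neighbourhoods so that $F^{-1}(N_k)$ is a regular neighbourhood of $F^{-1}(\tau_k) \supseteq \tau_{k-1}$, the map $F$ carries $\gamma_k$ homeomorphically onto a simple closed curve homotopic rel $P_F$ to $\gamma_{k+1}$ (indices cyclic); equivalently, $\gamma_k$ is homotopic rel $P_F$ to a component of $F^{-1}(\gamma_{k+1})$ mapped homeomorphically. Finally one verifies that each $\gamma_k$ is non--peripheral: a loop in a ray class must alternate between $\widehat{\C}_{f_1}$ and $\widehat{\C}_{f_2}$ (within a single plane a ray class is a star, hence simply connected), so $L_{k-1}$ meets both planes, and one shows --- as in \cite{TanLei:1990,TanShish} --- that each of its two complementary disks contains a point of $P_F$. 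It then follows that $\{\gamma_1, \ldots, \gamma_n\}$, after deleting repetitions, is a multicurve on which $F$ acts as a cyclic chain of homeomorphisms, i.e.\ a Levy cycle, and it is generated by $\gamma = \gamma_1$.

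The main obstacle lies in the third paragraph, and is twofold. First is the degree--one control: ensuring the relevant preimage components map by homeomorphisms rather than by higher--degree wrappings --- this is exactly what distinguishes a Levy cycle from a mere Thurston obstruction, and it is forced by $F$ being a covering along each $\tau_k$, which in turn rests on periodic landing points being non--critical and on the equatorial map being $\zeta \mapsto \zeta^d$. Second is the non--peripherality of the boundary curves, i.e.\ that such a loop genuinely separates $P_F$; this exploits the bipartite structure of loops in ray classes of a mating. Once these two points are in place the remaining steps --- the forward iteration, the choice of neighbourhoods, and the homotopies --- are routine.
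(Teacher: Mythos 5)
The paper states this lemma with only the remark that it is ``a slight generalisation of Theorem~\ref{t:Tanequivalences}'' and gives no proof of its own, so there is no argument of the paper to compare you against. Your proposal follows the standard tubular--neighbourhood mechanism from \cite{TanLei:1990,TanShish}, which is certainly the intended route. The setup (each $F|_{\tau_k}$ is unramified since all critical points of a mating of hyperbolic polynomials lie in $\stackrel{\circ}{K_1} \cup \stackrel{\circ}{K_2}$, disjoint from any ray class) and the construction of the multicurve from boundary components of regular neighbourhoods are correct.

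However, the two obstacles you flag in your last paragraph are genuine, and the first one is not quite patched by what you propose. A covering of finite graphs can wrap a loop around several times, and pigeonhole alone only produces a loop that eventually returns to its own free--homotopy class; it does not by itself force the return to have degree one. (If $\tau_0$ has Euler characteristic $0$, the usual $\chi$--argument that rules out higher--degree self--covers also fails.) There is a cleaner and gap--free route available which you do not use: the paper records, in the discussion preceding Lemma~\ref{l:armslemma}, that a periodic ray class contains no strictly preperiodic point, so every vertex of $\tau_0$ is periodic and hence every edge of $\tau_0$ is a ray of \emph{periodic} angle. The angle map $\theta \mapsto d^n\theta$ is injective on periodic angles (a nonzero difference of two periodic angles has denominator coprime to $d$ and so cannot equal $k/d^n$), so $F^{\circ n}$ acts as a bijection on the finite edge set of $\tau_0$, hence on $\tau_0$ itself. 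It follows at once that each $F|_{\tau_k}\colon \tau_k \to \tau_{k+1}$ is a homeomorphism, and the degree--one property of the induced maps on boundary curves is automatic; the pigeonhole and the replacement of $n$ by a multiple are unnecessary.

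The second obstacle --- non--peripherality of the boundary curves --- is left as an appeal to Tan Lei and Shishikura and does need substance. A loop in a ray class alternates between the two copies of $\C$, but it is not immediate that \emph{every} complementary disc of every such loop contains a postcritical point of the mating: a priori a wedge at a periodic branch point could be postcritically empty, and if so the corresponding boundary curve is peripheral and cannot lie in any Levy cycle. In that case the lemma, read literally, would overclaim; the content actually used in the paper (Lemmas~\ref{disjointness} and~\ref{l:matingrayclass}) is only that a loop in a periodic ray class forces \emph{some} Levy cycle, which survives after discarding peripheral boundary curves. Your proof should either establish that each complementary region of the loop meets the postcritical set (using that the vertices of the loop are periodic branch points at which, for postcritically finite hyperbolic polynomials, critical--orbit Fatou components accumulate), or explicitly weaken the conclusion to ``the non--peripheral boundary curves, together with their pullbacks, form a Levy cycle''.
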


By a result of Rees \cite{Rees:Param}, in the hyperbolic postcritically finite case, if the formal mating of $f_1$ and $f_2$ is Thurston equivalent to the rational map $F$, then $F$ will be the geometric mating of $f_1$ and $f_2$. Hence we can use each of the different notions of mating when discussing a mating, without worrying that a different choice will affect the Thurston class. 

\subsubsection{A combinatorial view of matings}

In this section we discuss the (periodic) ray classes that occur in the formal matings, which are then collapsed in the topological mating. This discussion is extremely natural and will allow us to focus on the important ray classes (those which become the cluster points) in the later sections.

\begin{lem}\label{disjointness}
Let $F = f_1 \uplus f_2$ be the formal mating of two hyperbolic polynomials which has no Thurston obstruction.	Let $z_0,F(z_0) = z_1,\ldots,F^{\circ(n-1)} = z_{n-1}$ be a period $n$ orbit of $f_1$ which is contained in $J(f_1)$ and has combinatorial rotation number different from $0$. Then the periodic ray classes $[z_0],[z_1],\ldots,[z_{n-1}]$ are pairwise disjoint.
\end{lem}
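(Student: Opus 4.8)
The plan is to argue by contradiction: I will show that if two of the ray classes $[z_0],\dots,[z_{n-1}]$ coincided, then some \emph{periodic} ray class of $F$ would contain a closed loop, which by Lemma~\ref{TanShish} yields a Levy cycle and hence --- by the bicritical equivalence between Levy cycles and Thurston obstructions --- a Thurston obstruction, contradicting the hypothesis that $F=f_1\uplus f_2$ is unobstructed.

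So suppose $[z_i]=[z_j]$ with $0\le i<j\le n-1$. Since $F$ maps external rays to external rays and the circle at infinity to itself, it sends ray--equivalent points to ray--equivalent points; using this together with the fact that ray classes are either equal or disjoint, repeated application of $F$ gives $[z_0]=[z_m]$ where $m=j-i\in\{1,\dots,n-1\}$, and $z_0\neq z_m$ because the orbit has period $n$. Put $\Sigma:=[z_0]$. As $z_0$ is $n$--periodic we have $F^{\circ n}(\Sigma)\subseteq\Sigma$, so $\Sigma$ is a periodic ray class. I would next invoke the (standard, in this postcritically finite setting) structural fact that $\Sigma$ is a finite topological graph: it is a union of closures of finitely many external rays of $f_1$ and $f_2$ --- all of periodic angles of bounded period --- together with their landing points, and at the vertex $z_0$ its incident edges are precisely the $q$ external rays of $f_1$ landing there. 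Because the combinatorial rotation number of the orbit is nonzero, $q\ge 2$, and the first return map $F^{\circ n}$ (which equals $f_1^{\circ n}$ on $\widetilde{\C}_{f_1}$) cyclically permutes these $q$ ``arms'' by a fixed nonzero shift.

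The core of the argument is to produce a closed loop in $\Sigma$. Since $\Sigma$ is a Peano continuum it is arcwise connected, so I choose a simple arc $A\subseteq\Sigma$ from $z_0$ to $z_m$; near $z_0$ it runs along one definite arm. The set $F^{\circ n}(A)$ is a continuous image of an arc, hence again a Peano continuum; it lies in $F^{\circ n}(\Sigma)\subseteq\Sigma$ and contains $F^{\circ n}(z_0)=z_0$ and $F^{\circ n}(z_m)=z_m$, so it contains a simple arc $A'$ from $z_0$ to $z_m$. The key observation is that the critical points of the mating lie in the \emph{interiors} of $K(f_1)$ and $K(f_2)$, hence off $\Sigma$, so $F^{\circ n}$ is a local homeomorphism near $z_0$. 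Consequently, in a small neighbourhood of $z_0$ the set $F^{\circ n}(A)$ --- and therefore $A'$ --- runs along the $f_1^{\circ n}$--image of the arm used by $A$, which is a \emph{different} arm since the rotation number does not vanish. Thus $A$ and $A'$ are two distinct arcs in $\Sigma$ joining $z_0$ to $z_m$ and leaving $z_0$ along different arms, and a routine point--set argument --- cut $A$ at the first parameter where it returns to $A'$; that initial sub-arc of $A$, together with a sub-arc of $A'$, bounds a simple closed curve --- exhibits a closed loop in $\Sigma$. Feeding this into Lemma~\ref{TanShish} produces the contradiction and completes the proof.

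The step I expect to be the main obstacle is the bookkeeping at $z_0$: ensuring that when one replaces the connected set $F^{\circ n}(A)$ by an honest sub-arc $A'$, the information about which arm is used near $z_0$ is not lost. This is exactly where both hypotheses enter --- the absence of critical points on $\Sigma$ (giving the local homeomorphism) and the non-vanishing of the combinatorial rotation number (forcing the arm genuinely to move) --- while the remaining ingredients, namely that $\Sigma$ is a finite graph / Peano continuum and the ``two arcs with common endpoints contain a loop'' fact, are routine.
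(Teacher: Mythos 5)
Your proof follows essentially the same route as the paper's: take a path $\gamma$ through external rays from $z_0$ to $z_m$ inside the common ray class, push it forward by the first return map $F^{\circ n}$ to get a second path $\gamma'$ between the same two points, observe that the nonzero rotation number forces $\gamma'$ to leave $z_0$ along a different arm than $\gamma$, and conclude that $\gamma\cup\gamma'$ contains a loop, contradicting the absence of obstructions via Lemma~\ref{TanShish}. You simply spell out more carefully the point-set details (reducing $[z_i]=[z_j]$ to $[z_0]=[z_m]$, extracting an honest sub-arc from the continuous image, and noting the first-return map is a local homeomorphism at $z_0$) that the paper's terser proof leaves implicit.
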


\begin{proof}
Suppose there exists $k$ with $[z_0] = [z_k]$. Then there exists a path through external rays $\gamma$ from $z_0$ to $z_k$. The map $F^{\circ n}$ will take $z_0$ to $z_0$ and $z_k$ to $z_k$ and takes the path $\gamma$ to some path $\gamma'$, which is a path from $z_0$ to $z_k$. But $\gamma'$ is not equal to $\gamma$, since the first return map to $z_0$ and $z_k$ will permute the external rays landing there. Hence the union $\gamma \cup \gamma'$ contains a loop; and so the mating will be obstructed. This contradiction completes the proof.
\end{proof}

\begin{lem}\label{l:matingrayclass}
	If the mating of two hyperbolic polynomials is not obstructed, each periodic ray class contains at most one periodic branch point with non-zero combinatorial rotation number.
\end{lem}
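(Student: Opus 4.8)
The plan is to argue by contradiction, using the same mechanism as in Lemma~\ref{disjointness}: a nontrivial permutation of the rays landing at a periodic branch point, when combined with a path of external rays realising a ray-class coincidence, forces a loop and hence a Levy cycle, contradicting the assumption that the mating is unobstructed. So suppose some periodic ray class $[x]$ contains two distinct periodic branch points $b$ and $b'$, each with non-zero combinatorial rotation number. We may assume (replacing $[x]$ by an iterate under $F\uplus f_2$, which only permutes ray classes among themselves) that $[x]$ itself is fixed by the appropriate return map, so that both $b$ and $b'$ are fixed by $F^{\circ m}$ for a common period $m$.

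The key step is to choose, inside the ray class $[x]$, a simple path $\gamma$ of external rays running from $b$ to $b'$. Since $b$ and $b'$ lie in the same ray class, such a $\gamma$ exists; one can take it to be embedded by pruning any loops (if $\gamma$ already contains a loop we are immediately done, as in Lemma~\ref{disjointness}). Now apply the first return map $F^{\circ m}$. Because $b$ has non-zero combinatorial rotation number, $F^{\circ m}$ cyclically permutes the (at least two, in fact at least $n\geq 2$) external rays landing at $b$ nontrivially, so the initial arm of $\gamma$ at $b$ is sent to a \emph{different} arm at $b$; likewise the terminal arm of $\gamma$ at $b'$ is sent to a different arm at $b'$. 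Hence $\gamma' = F^{\circ m}(\gamma)$ is a path from $b$ to $b'$ which shares neither its first nor its last edge with $\gamma$. The union $\gamma \cup \gamma'$ is then a connected graph in which both $b$ and $b'$ have degree at least two, so it contains a closed loop made up of external rays. By Lemma~\ref{TanShish} (or directly by Theorem~\ref{t:Tanequivalences}, since this closed loop together with the fixed points $b,b'$ exhibits condition (2)), the mating has a Levy cycle and is therefore obstructed, contradicting the hypothesis.

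The main obstacle is the bookkeeping in the middle step: one has to be careful that $\gamma'$ genuinely differs from $\gamma$ as a path, not merely that the endpoints' local arms differ, and in particular to handle the possibility that $\gamma$ passes through $b$ or $b'$ more than once, or passes through other branch points in the class. Restricting attention to an embedded $\gamma$ (justified by the loop-pruning observation above) and noting that $F^{\circ m}$ fixes the whole ray class setwise disposes of these cases, since any forced loop already completes the argument. The rotation-number hypothesis is used precisely to guarantee that the permutation of arms at each of $b$ and $b'$ has no fixed arm along $\gamma$, which is what prevents $\gamma'$ from coinciding with $\gamma$; this is the exact analogue of the step "the first return map will permute the external rays landing there" in the proof of Lemma~\ref{disjointness}.
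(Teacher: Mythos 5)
Your overall strategy is the same as the paper's: connect the two branch points by a path $\gamma$ of external rays inside the ray class, apply the return map to get $\gamma'$, note that the nontrivial rotation at the endpoints forces $\gamma'\neq\gamma$, conclude that $\gamma\cup\gamma'$ contains a loop, and invoke the loop-gives-Levy-cycle machinery. The final loop-and-Levy-cycle step and your degree-two observation at $b$ and $b'$ are fine, and your aside about pruning loops from $\gamma$ is a sensible way to keep things clean.

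The one genuine gap is in the sentence ``both $b$ and $b'$ are fixed by $F^{\circ m}$ for a common period $m$,'' which is asserted but not justified, and on which the rest of the argument rests. If you simply take $m$ to be the least common multiple of the two periods, then it is no longer automatic that $F^{\circ m}$ rotates the arms \emph{nontrivially} at both $b$ and $b'$: if $b$ has period $n_1$, rotation number $p_1/q_1$ in lowest terms, and $m=kn_1$, then the rotation of $F^{\circ m}$ at $b$ is $kp_1/q_1\pmod 1$, which vanishes precisely when $q_1\mid k$. So the claim that ``the initial arm of $\gamma$ at $b$ is sent to a different arm'' does not follow from the hypotheses alone; you need the stronger statement that $b$ and $b'$ actually have the \emph{same} period, so that $m$ can be taken to be that common period and the rotation at each endpoint is exactly the given (nonzero) one. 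The paper establishes precisely this, by a short argument using Lemma~\ref{disjointness}: if $n$ is the period of $z_0$, then $F^{\circ n}$ fixes the ray class and sends $w_0$ to $w_n$ with $[w_0]=[w_n]$, whence $w_0=w_n$ by Lemma~\ref{disjointness}, so the period of $w_0$ divides $n$, and by symmetry the periods agree. Once you insert that step (or the equivalent reasoning via the period of the ray class), the remainder of your argument goes through and matches the paper's.
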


\begin{proof}
Let $w_{0}$ and $z_{0}$ be two periodic branch points with non-zero combinatorial rotation number, such that $[w_{0}] = [z_{0}]$. We will show that the periods of $z_0$ and $w_0$ are equal. Let the period of $z_0$ be $n$. Then the map $F^{\circ n}$ maps $z_0$ to itself, the periodic ray class $[z_0]$ to itself and $w_0$ to $w_n = F^{\circ n}(w_0)$. Then we must have $[w_0] = [z_0] = [w_n]$ and so $w_0 = w_n$ by Lemma~\ref{disjointness}. Hence the period of $w_0$ is divisible by $n$. An analogous argument shows the period of $w_0$ divides the period of $z_0$, and so the periods are the same. Denote this common period by $n$.  

Now let $\gamma$ be a path through external rays from $w_{0}$ to $z_{0}$. Since none of the rays meet a pre-critical point (since the maps $f_1$ and $f_2$ are hyperbolic), the $n$th iterate of $\gamma$, which we call $\gamma'$, will also be a path from $w_{0}$ to $z_{0}$. Since the external rays at $w_{0}$ and $z_{0}$ are permuted under the first return map, $\gamma \neq \gamma'$, and so the curve $\gamma \cup \gamma'$ contains a loop, and so the mating is obstructed. \end{proof}

The importance of this second lemma is it will tell us the nature of the maps which make up the matings that produce maps with cluster cycles. Intuitively, it is clear that, to create a rational map with a cluster cycle of period $p$ and rotation number $\rho$, it is necessary for one of the maps to have a periodic cycle of period $p$ and combinatorial rotation number $\rho$, so that this cycle for the polynomial will become the cluster point cycle. We will show that this intuition is correct, at least in the cases for periods one and two (though analogous proofs exist for all periods). The above lemma, on the other hand, sheds some light on the behaviour of the other polynomial in the mating. In short, it will allow us to say that the second polynomial must belong to a satellite component in the Multibrot set. 

Note that if we have a periodic ray class which contains a point $p$ with combinatorial rotation number not equal to $0$, then the ray class must contain a sort of rotational symmetry: each global arm at the point $p$\footnote{a global arm is a component of the complement to $p$ in the ray class} is homeomorphic to each of the other global arms. Furthermore, if the ray class is periodic, then it cannot contain a strictly pre-periodic point, and all points in the ray class have period dividing the period of the ray class. 

\begin{lem}\label{l:armslemma}
	Suppose a periodic ray class contains a point $p$ which has combinatorial rotation number different from $0$. Then given any periodic orbit $\mathcal{O} = \mathcal{O}(z_0)=\{z_0,z_1,\ldots,z_{n-1} \}$, the intersection between any global arm at $p$ and $\mathcal{O}$ contains at most one point. 
\end{lem}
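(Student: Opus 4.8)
The plan is to argue by contradiction, using the rotational symmetry of the ray class at $p$ recorded in the remark preceding the statement (we work, as throughout this section, with an unobstructed formal mating). Write $m$ for the period of the ray class $[p]$. The first thing to establish is that $m$ is also the period of $p$ itself and that the first return map $F^{\circ m}$ of $[p]$ is, in a strong sense, a rotation: since $f_1$ and $f_2$ are hyperbolic the external rays making up $[p]$ avoid all pre-critical points, so $F^{\circ m}$ restricts to a homeomorphism of $[p]$ onto itself fixing $p$ (this is exactly the device used in the proofs of Lemmas~\ref{disjointness} and~\ref{l:matingrayclass}), whence it permutes the global arms at $p$; and because $\rho(F)\neq 0$ this permutation is a non-trivial cyclic rotation, so it fixes no global arm. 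I would also recall from the same remark that every periodic point lying in the periodic ray class $[p]$ has period dividing $m$, and hence is fixed by $F^{\circ m}$.

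With this in hand the argument is short. Suppose, for contradiction, that some global arm $A$ at $p$ contains two distinct points $z_a$ and $z_b$ of the orbit $\mathcal{O}$; if $\mathcal{O}\cap[p]$ were empty or equal to $\{p\}$ the conclusion would hold vacuously (since $p$ lies in no global arm), so we may assume such $A$, $z_a$, $z_b$ exist. Since $z_a\in A\subseteq[p]$, the period of $z_a$ divides $m$, so $F^{\circ m}(z_a)=z_a$. On the other hand $F^{\circ m}$ carries the global arm $A$ onto some global arm $A'$, and $z_a=F^{\circ m}(z_a)\in A'$ while also $z_a\in A$; as distinct global arms are disjoint connected components of $[p]\setminus\{p\}$, we get $A'=A$. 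Thus $F^{\circ m}$ fixes the arm $A$, contradicting the fact that the first return map fixes no global arm. Hence every global arm meets $\mathcal{O}$ in at most one point.

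The contradiction itself is immediate, so the real work is in justifying the two inputs imported from the surrounding remark: that the first return map $F^{\circ m}$ genuinely permutes \emph{all} of the global arms of the ray class (not merely the arms of the star) as a fixed-point-free cyclic rotation, and that a periodic point of a periodic ray class inherits a period dividing that of the class. Both ultimately rest on the tree (no-loop) structure of periodic ray classes of unobstructed matings --- a closed loop in a periodic ray class would, by Lemma~\ref{TanShish}, generate a Levy cycle --- together with hyperbolicity of $f_1$ and $f_2$; making these two facts precise is where I expect the main difficulty to lie.
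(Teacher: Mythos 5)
Your argument has a genuine gap, and it is located exactly where you suspected: the second of the two ``inputs'' you import from the remark is false, with the divisibility reversed. If $z_a\in[p]$ has period $n$, then $F^{\circ n}[p]=F^{\circ n}[z_a]=[z_a]=[p]$, so the period $m$ of the ray class divides $n$, not the other way round. In general $n$ is a strict multiple of $m$: in the mating of the rabbit with the airplane the ray class of the rabbit's $\alpha$-fixed point is fixed ($m=1$), yet it contains the period-$3$ root points of the airplane's critical orbit Fatou components. So you cannot conclude that $F^{\circ m}(z_a)=z_a$.

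This is not a cosmetic issue. Your contradiction never actually uses the second point $z_b$; as written the argument would show that \emph{no} global arm meets $\mathcal{O}$ at all, which is false and would destroy the very application the paper makes of this lemma (in Proposition~\ref{p:nrabbit}, where each global arm of $[\alpha_1]$ must carry exactly one root point of a critical orbit Fatou component of $f_2$). The correct quantity to iterate by is not the period $m$ of $[p]$ but the return time $k$ of the individual arm $\ell$ (so $F^{\circ k}(\ell)=\ell$ and $F^{\circ k}$ restricts to a homeomorphism of $\ell$). The paper's proof first shows that if $\mathcal{O}$ meets $[p]$ at all then every global arm meets $\mathcal{O}$, and then, supposing an arm $\ell$ contains at least two orbit points with $z_0$ nearest to $p$, applies the return map $F^{\circ k}$ of $\ell$ to the sub-arc $\gamma=[p,z_0]$: the image $\gamma'\subset\ell$ is an arc from $p$ past $z_0$ to the next orbit point, forcing a pre-image of $z_0$ in the interior of $\gamma$, which is impossible since $\gamma$ contains no pre-periodic points and no other orbit points. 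That step, comparing $\gamma$ with its image under the arm's return map, is what actually uses the hypothesis of two points in one arm, and is what is missing from your proposal.
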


\begin{proof}
Clearly if $[p] \neq [z_i]$ for any $i$ then the intersection of $\mathcal{O}(z)$ with any global arm will be empty. So assume without loss of generality that $[p] \cap [z_0] \neq \varnothing$. Let $\ell$ be a global arm at $p$ in the periodic ray class. There exists some $k$ so that $F^{\circ k}(\ell) = \ell$. If $\ell \cap \mathcal{O}$ is empty, then $\ell$ also contains no pre-images of points in $\mathcal{O}$, since otherwise some forward image of $\ell$ will contain a point in $\mathcal{O}$, and so $\ell \cap \mathcal{O}$ would be non-empty, a contradiction. Since $\ell$ contains no pre-images of points in $\mathcal{O}$, no forward iterate of $\ell$ can contain points of $\mathcal{O}$. Since global arms are mapped homeomorphically onto global arms, this means all the global arms have empty intersection with the orbit $\mathcal{O}$. Hence every global arm at $p$ must contain at least one point of $\mathcal{O}$.
	
Suppose the global arm $\ell$ contains $r>1$ elements of $\mathcal{O}$, $z_0,\ldots,z_{r-1}$. Under the first return map to $\ell$, these elements must be permuted, since the orbit is periodic. Without loss of generality, we can assume the first element of $\ell \cap \mathcal{O}$ on the global arm (in terms of distance from $p$) is $z_0$. Then the second point (again, in terms of distance in the tree from $p$) in $\ell \cap \mathcal{O}$ is $F^{\circ k}(z_0) = z_k$ for some $k$. Let $\gamma$ be the sub-arm from $p$ to $z_0$. Then $\gamma' = F^{\circ k}(\gamma)$ is contained in $\ell$ and will be a path from $p$ to $z_k$, since the global arm $\ell$ maps homeomorphically onto itself under the return maps (since the map $F$ is a homeomorphism on the graphs of the ray equivalence classes). In particular, $z_0 \in \gamma'$. However, this means that there has to be a pre-image of $z_0$ in the path $\gamma$. However, by construction, there are no points in the orbit of $z_0$ in the interior of $\gamma$ and there also does not exist any preperiodic points in $\gamma$. This contradiction means that the global arm $\ell$ must contain at most one point in $\mathcal{O}$. \end{proof}

\subsection{Main results}

The focus of this paper will be the proof of the following two theorems.

\begin{mthm}\label{mainthm1}
 Suppose that $F$ is a bicritical rational map with a fixed cluster point and the combinatorial rotation number is $p/q$. Then the following hold.
   \begin{itemize}
      \item{$F$ is the mating of a $q$-rabbit (whose $\alpha$-fixed point has combinatorial rotation number $p/q$) and another map $h$.}
      \item{The map $h$ has an associated angle with angular rotation number $(q-p)/q$.}
   \end{itemize}
\end{mthm}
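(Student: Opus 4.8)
The plan is to combine an explicit construction of a mating with the classification theorem of the companion paper. By \cite{Thurstoneq} a bicritical rational map with a fixed cluster point is determined, up to Thurston equivalence, by its combinatorial data $(\rho,\delta)$, which in our situation has the form $(p/q,\delta)$ for some odd integer $\delta$; and by the result of Rees \cite{Rees:Param}, if a formal mating is Thurston equivalent to $F$ then $F$ is the corresponding geometric mating. It therefore suffices to produce, for the given $p/q$ and for each admissible $\delta$, a pair of monic unicritical polynomials $(R_q,h)$ such that the formal mating $R_q\uplus h$ has no Thurston obstruction and carries a fixed cluster point with combinatorial data $(p/q,\delta)$, where $R_q$ is a $q$-rabbit whose $\alpha$-fixed point has combinatorial rotation number $p/q$. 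Applying the classification of \cite{Thurstoneq} to $F$ and to $R_q\uplus h$ then yields $F\cong R_q\Perp h$, which is the first bullet.

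For the rabbit factor, we take $R_q$ to be one of the centres of the period-$q$ hyperbolic component of $\M_d$ attached to the main cardioid at internal angle $p/q$. Its $\alpha$-fixed point is the landing point of $q$ external rays of angles $\Theta=\{\theta_1,\dots,\theta_q\}$, on which multiplication by $d$ acts as a cyclic permutation with rotation number $p/q$, and the $q$ Fatou components of the critical cycle of $R_q$ all touch $\alpha_{R_q}$ and are permuted with the same rotation number. In the formal mating the gluing $t\mapsto -t$ along the circle at infinity identifies these rays with the rays of $h$ of angles $-\Theta$, on which multiplication by $d$ acts with rotation number $-p/q\equiv (q-p)/q$. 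We now choose $h$, its choice depending on $\delta$, to be a monic unicritical polynomial with a period-$q$ critical cycle whose $q$ Fatou components touch the landing points in $K(h)$ of the rays $R_h(-\theta_i)$; by Lemma~\ref{l:matingrayclass} these landing points are necessarily branch points of zero combinatorial rotation number, since the ray class in question already contains the branch point $\alpha_{R_q}$ of rotation number $p/q\neq 0$. With this choice the ray class $[\alpha_{R_q}]$ in $S^2_{R_q,h}$ is invariant under $R_q\uplus h$ and is met, on each side of the circle at infinity, by $q$ critical-orbit Fatou components; hence it collapses to a fixed cluster point whose combinatorial rotation number is $p/q$ in the sense of Definition~\ref{d:CRN}. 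The position of the critical value of $h$ among the $q$ arms emanating from the cluster point is then arranged to realise the prescribed critical displacement $\delta$. In particular each angle of $-\Theta$ is an associated angle of $h$ with angular rotation number $(q-p)/q$; this is the second bullet.

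Finally we must rule out a Thurston obstruction for $R_q\uplus h$. Since the mating is bicritical, an obstruction is equivalent by Theorem~\ref{t:Tanequivalences} to $[\alpha_{R_q}]=[\alpha_h]$, equivalently to $R_q$ and $h$ lying in conjugate limbs of $\M_d$; this is excluded by choosing $h$ outside the limb conjugate to the one containing $R_q$. Consistently, by Lemma~\ref{TanShish} any obstruction would arise from a periodic ray class containing a closed loop, while Lemmas~\ref{disjointness}, \ref{l:matingrayclass} and \ref{l:armslemma} show that the only periodic ray class carrying a branch point of non-zero combinatorial rotation number, namely $[\alpha_{R_q}]$, is a tree for the chosen $h$. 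Thus $R_q\uplus h$ is unobstructed, realises the data $(p/q,\delta)$, and by \cite{Thurstoneq} is Thurston equivalent to $F$, so $F\cong R_q\Perp h$. The main obstacle I anticipate is in the second paragraph: identifying, for each admissible critical displacement $\delta$, the correct partner $h$ and verifying that the resulting mating possesses exactly one fixed cluster point, with data precisely $(p/q,\delta)$ and no extraneous clusters — this is the step at which the cyclic arrangement of the $2q$ arms of the star of $F$ must be matched, side by side, against the external-angle combinatorics of $h$.
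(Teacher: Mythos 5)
Your proposal follows the same overall route as the paper: realize the combinatorial data $(p/q,\delta)$ by an explicit mating of a $q$-rabbit with a suitably chosen partner $h$, then invoke the Thurston-equivalence classification from \cite{Thurstoneq} (Theorem~\ref{Fixedcase}) together with Rees's theorem to conclude that $F$ itself is this geometric mating, so that the two bullets follow from the construction. The paper packages the same argument into Proposition~\ref{p:nrabbit} (one factor must be a $q$-rabbit), Lemma~\ref{hangles} (the other factor has an associated angle of rotation number $(q-p)/q$), and Proposition~\ref{p:allobtained} (all admissible data can be realized by matings).

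Two points to tighten. First, your sentence ``each angle of $-\Theta$ is an associated angle of $h$'' overclaims. By the paper's definition an associated angle is one whose external ray lands at a root point of the \emph{critical value} Fatou component of $h$; the $q$ rays $R_h(-\theta_i)$ land at the root points of the $q$ different Fatou components in the critical cycle of $h$, so only the one (or two) whose landing point is the root of the critical value component are associated angles. The correct statement, and what Lemma~\ref{hangles} asserts, is that \emph{one} of the $-\theta_i$ is an associated angle of $h$, and it has rotation number $(q-p)/q$. Second, you flag as an anticipated obstacle the step of matching $\delta$ to the choice of $h$; the paper's Proposition~\ref{p:allobtained} closes this by labelling the angles $\theta_1,\ldots,\theta_q$ at $\alpha_{R_q}$ cyclically starting from the ray anticlockwise of the critical value component of the rabbit, and taking $h$ to be the polynomial associated to $-\theta_k$ to produce $\delta = 2k-1$. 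With that explicit indexing, the cyclic arrangement of the $2q$ arms around the cluster point is forced, and the verification you were worried about reduces to reading off the position of the critical value of $h$ among the arms.
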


The theorem in the period two cluster case is only concerned with the quadratic case.

\begin{mthm}\label{mainthm2}
  Suppose that $F$ is a quadratic rational map with a period two cluster point and the combinatorial rotation number is $p/q$. Then the following hold. 
  \begin{itemize}
    \item{$F$ is a mating.}
    \item{Precisely one of the maps in the mating belongs to the $1/2$-limb of the Mandelbrot set and this map is one of the two period $2q$ maps which belong to the $p/q$-sublimb of the period two component of the Mandelbrot set.}
    \item{The other map has an associated angle with 2-angular rotation number $(q-p)/q$.}
  \end{itemize}
\end{mthm}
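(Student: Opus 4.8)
The plan is to reduce Theorem~\ref{mainthm2} to an existence problem and then settle it by an explicit construction, following the template of Theorem~\ref{mainthm1}. Since $F$ is a quadratic rational map with a period two cluster cycle, the results of \cite{Thurstoneq} show that the Thurston class of $F$ is determined by its combinatorial data $(\rho,\delta)=(p/q,\delta)$; in particular this pair is admissible. So it suffices to build, for every admissible $(p/q,\delta)$, a mating $f_1\Perp f_2$ whose rational map carries a period two cluster cycle with that same data $(p/q,\delta)$: by the uniqueness half of \cite{Thurstoneq} this rational map is then Thurston equivalent to $F$, and since a formal mating Thurston equivalent to a rational map is automatically the geometric mating (Rees~\cite{Rees:Param}, as recalled above), we conclude $F\cong f_1\Perp f_2$.

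First I would identify the polynomial $f_1$. Write $G:=f_1\Perp f_2$ for the mating to be constructed, and assume it has a period two cluster cycle with rotation number $p/q$. The cluster point of $G$ lifts to a periodic ray class $[x]$ of the formal mating $f_1\uplus f_2$, and Lemmas~\ref{l:matingrayclass} and \ref{l:armslemma} show this class is rotationally organised enough that, for one of the two polynomials --- call it $f_1$ --- the ray class $[x]$ contains a period two point $a$ of $f_1$ around which the adjacent critical orbit Fatou components of $f_1$ are cyclically permuted by the return map $f_1^{\circ2}$ with rotation number $p/q$ (Lemma~\ref{disjointness} simultaneously gives that the two cluster points of $G$ are distinct, in keeping with Lemma~\ref{p:Marysresult}). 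A monic quadratic centre carrying such a period two point is precisely one of the period $2q$ centres lying in the wake cut off by the two parameter rays landing at the root of the satellite component which bifurcates from the period two hyperbolic component of $\M_2$ at internal angle $p/q$; because the period two component already lies in the $1/2$-limb of $\M_2$, so does $f_1$. There are exactly two such centres --- the ``double rabbit'' $f$ and the secondary centre $g$ lying in the wake of $f$ --- and the two choices are selected by the two allowed values of $\delta$, i.e.\ by the marking of the critical points of $F$. This determines $f_1$.

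Next I would pin down $f_2$ and check that the mating is unobstructed. The period two point $a$ of $f_1$ is the landing point of $q$ external rays of $f_1$ whose angles form a single cycle under multiplication by $4$; under the gluing $t\mapsto-t$ these attach to the external rays of $f_2$ at the negated angles, and for the quotient to realise a genuine cluster with critical displacement $\delta$ the common landing point of those rays in $K(f_2)$ must carry a critical orbit Fatou component of $f_2$ in the prescribed cyclic position. This requirement singles out the hyperbolic component of $\M_2$ containing $f_2$ via its characteristic (``associated'') angle $\theta$, and a short computation exploiting the orientation-reversing flip $t\mapsto-t$ gives that the $2$-angular rotation number of $\theta$ equals $(q-p)/q$, the period two analogue of the $(q-p)/q$ appearing in Theorem~\ref{mainthm1}. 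To rule out a Thurston obstruction I would invoke Theorem~\ref{t:Tanequivalences}: it is enough to verify $[\alpha_1]\neq[\alpha_2]$, equivalently that $f_1$ and $f_2$ do not lie in conjugate limbs of $\M_2$, which holds because the above associated angle forces $f_2$ outside the limb conjugate to the sublimb containing $f_1$; Lemma~\ref{TanShish} then also confirms that the periodic ray class destined to collapse to the cluster point does not by itself generate a Levy cycle. Putting these together, $G$ is a genuine rational map with a period two cluster cycle, and retracing the construction shows its combinatorial data is exactly $(p/q,\delta)$.

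The main obstacle will be the angle bookkeeping of the previous paragraph: one must prove that the external angles of $f_1$ at its period two cycle, after negation, land at a single point of $K(f_2)$ in precisely the cyclic order needed to produce combinatorial data $(p/q,\delta)$; that the resulting $f_2$ is unique; that it lies in no limb conjugate to a sublimb meeting $f_1$, so that Theorem~\ref{t:Tanequivalences} genuinely excludes an obstruction; and that its $2$-angular rotation number comes out to $(q-p)/q$. This reduces to a careful analysis of the arithmetic relating the rotation number $p/q$ of the $f_1^{\circ2}$-action on the arms at $a$, the combinatorial rotation number of the cluster of $G$, and the angular data of $f_2$, together with a verification that the construction sweeps out exactly the admissible pairs $(p/q,\delta)$ and no others.
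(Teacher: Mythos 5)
Your proposal follows essentially the same template as the paper: reduce the theorem to an existence problem via the Thurston-uniqueness result of \cite{Thurstoneq} (Theorem~\ref{Per2case}), then show each admissible pair $(p/q,\delta)$ is realised by an unobstructed mating, and in the process identify the two polynomials. This is exactly how the paper's Section~\ref{Proofmainthm2} proceeds, quoting Proposition~\ref{p:alldata2} for existence, Proposition~\ref{p:2clusttunedrabbit} for the identity of the map in $\M_{1/2}$, and Proposition~\ref{p:hangles} for the $2$-angular rotation number of the complement. Two points where your route diverges or is thinner than the paper's are worth flagging. First, to locate the period-two point of $f_1$ inside the cluster ray class you lean on Lemmas~\ref{l:matingrayclass} and \ref{l:armslemma} directly; these lemmas constrain ray classes that are already known to contain a branch point with non-zero rotation number, but by themselves they do not produce such a branch point, nor do they say it lies in $J(f_1)$ rather than $J(f_2)$. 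The paper avoids this gap by first proving Theorem~\ref{t:preciselyone} with a clean degree-two counting argument: if both polynomials lay outside $\M_{1/2}$, the rays of angles $1/3$ and $2/3$ would yield a second period-two cycle for the rational map, which a quadratic cannot have; only after that does it identify the sublimb of the distinguished polynomial. You should supply an argument of this kind rather than appealing loosely to the rotational organisation of the ray tree. Second, your sentence that the choice between the double rabbit $f_{p/q}$ and the secondary map $g_{p/q}$ is ``selected by the two allowed values of $\delta$'' is not right: the paper's Proposition~\ref{l:critdisp} shows the secondary map produces only $\delta\in\{1,2q-1\}$, while the double rabbit realises every admissible $\delta$; the two centres are not in bijection with two values of $\delta$. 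Apart from these two points, the plan is sound and matches the paper's; the ``main obstacle'' paragraph correctly identifies the angle bookkeeping that Propositions~\ref{p:hangles}, \ref{p:alldata2} and \ref{p:hanglesconverse} carry out.
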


We also will prove the following fact about the multiplicity of the shared matings in the period two cluster case.

\begin{mthm}\label{t:shares}
  In the period two cluster cycle case, each combinatorial data can be obtained in either two, three or four ways and all of these multiplicities are obtained.
\end{mthm}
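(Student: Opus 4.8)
\emph{Setup.} The plan is to combine Theorem~\ref{mainthm2} with the uniqueness result of \cite{Thurstoneq} to reduce the statement to a finite combinatorial count. Fix a realisable combinatorial datum $(p/q,\delta)$ of period-two cluster type. By \cite{Thurstoneq} it is carried by a unique marked quadratic rational map $F$, and the ``multiplicity'' of the statement is the number of (appropriately marked) pairs of polynomials whose geometric mating is $F$. By Theorem~\ref{mainthm2}, in every such pair exactly one factor lies in the $1/2$-limb of the Mandelbrot set and is one of the two period-$2q$ maps in the $p/q$-sublimb of the period-two component, namely the double rabbit $f$ or the secondary map $g$ lying in the wake of $f$. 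Writing $m_f$ and $m_g$ for the numbers of partners of $f$ and of $g$ realising $F$, the multiplicity is $m = m_f + m_g$, and the theorem amounts to the assertions $m_f,m_g\in\{1,2\}$ together with the realisability of $m=2,3,4$.

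\emph{Upper bound.} I would prove $m_f\le 2$ (and symmetrically $m_g\le 2$) as follows. A partner $h$ of $f$ realising $F$ is, by the third bullet of Theorem~\ref{mainthm2}, a hyperbolic postcritically finite quadratic polynomial with an associated angle of 2-angular rotation number $(q-p)/q$. In the combinatorial picture of matings (Lemmas~\ref{disjointness}--\ref{l:armslemma}), this angle together with $\delta$ prescribes the cyclic pair of external rays of $h$ landing at the periodic point of $J(h)$ that becomes the cluster cycle, hence prescribes the root parameter of the hyperbolic component containing $h$; Lemma~\ref{l:armslemma} (each global arm of the cluster ray class meets a given periodic orbit at most once) rules out further coincidences that would force additional matings. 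The only residual freedom is the choice among the at most two hyperbolic components abutting that root, equivalently the placement of the critical value of $h$ relative to the star of the cluster --- precisely the ambiguity measured by the two critical displacements $F$ can carry. This gives at most two candidates, and non-obstructedness of each is verified, as in the proof of Theorem~\ref{mainthm2}, via Theorem~\ref{t:Tanequivalences} and Lemma~\ref{TanShish}.

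\emph{Lower bound and the trichotomy.} Existence (Theorem~\ref{mainthm2}) gives $m_f\ge 1$, and $m_g\ge 1$ follows because $f$ and $g$ lie in the same $p/q$-sublimb of the period-two component and carry the same internal-angle data at their distinguished cycles, so the partner construction in the proof of Theorem~\ref{mainthm2} applies verbatim with $g$ in place of $f$. An $f$-mating and a $g$-mating realising $F$ are never the same pair, since $f\not\cong g$ and the cluster-carrying factor is intrinsic to $F$; hence the two counts add and $m = m_f + m_g\in\{2,3,4\}$. To realise all three values I would give explicit examples in small period: for $\rho=1/2$ with smallest critical displacement one expects $m_f=m_g=1$, so $m=2$; the value $m=3$ should occur exactly when, for one of $f$ and $g$, the two critical displacements of $F$ become distinct and both admissible --- this is where the failure of the symmetry $\delta\mapsto-\delta$ in the period-two case (noted after the definition of the period-two critical displacement) is essential --- and $m=4$ when this happens for both. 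Direct angle computations for $q=2$ and $q=3$ should exhibit all three.

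The main obstacle is the sharp upper bound $m_f,m_g\le 2$ together with the bookkeeping of markings: re-marking the two critical points of $F$ produces a genuinely different combinatorial datum in the period-two case, so one must track precisely which partner of $f$ (respectively $g$) corresponds to which marking, verify that no partner is double-counted, and confirm that the candidates produced are exactly those realising $F$. Once this is in place, the trichotomy and its sharpness follow from the examples above.
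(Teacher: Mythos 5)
Your decomposition $m = m_f + m_g$ with $m_f, m_g \in \{1,2\}$ does not match the actual combinatorics, and the lower bound $m_g \geq 1$ is simply false. Consider $\rho = 1/4$, $\delta = 5$: the paper's explicit computation shows the only matings realising this datum are $f \Perp h_5$ and $h_5 \Perp f$, so no mating involving the secondary map $g$ occurs at all, i.e.\ $m_g = 0$. Your justification for $m_g \geq 1$ --- that $f$ and $g$ ``carry the same internal-angle data'' so the partner construction for $f$ ``applies verbatim'' with $g$ in place of $f$ --- is precisely what fails. The double rabbit $f$ is a tuning and has a symmetric Hubbard tree; the secondary map $g$ does not. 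Proposition~\ref{l:critdisp} shows that a mating $g_{p/q} \Perp h$ can only produce critical displacement $1$ or $2q-1$, so for generic $\delta$ there is no $g$-mating at all. The asymmetry $\delta \mapsto 2p - \delta$ that you correctly flag as important is exactly what makes the $g$-matings rare rather than ubiquitous.

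The correct bookkeeping is: $m_f = 2$ always, because equation~(\ref{e:shared}) gives the two distinct ordered pairs $(f_{p/q}, h_{\delta})$ and $(h_{2p-\delta}, f_{p/q})$, and these can never coincide since exactly one factor lies in $\M_{1/2}$. Then $m_g \in \{0,1,2\}$: the mating $g_{p/q} \Perp h$ exists and realises $(\rho,\delta)$ only when $\delta \in \{1, 2q-1\}$, and $h' \Perp g_{p/q}$ realises $(\rho,\delta)$ only when $2p - \delta \in \{1, 2q-1\}$, i.e.\ $\delta \in \{2p-1, 2p+1 \bmod 2q\}$. Depending on which of these conditions holds, $m_g = 0, 1$ or $2$, giving $m \in \{2,3,4\}$; the explicit calculation for $\rho = 1/4$ exhibits all three values. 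Your upper-bound argument for $m_f \leq 2$, though vaguely worded, is in the right spirit; but without the observation of Proposition~\ref{l:critdisp} and the forced count $m_f = 2$ coming from (\ref{e:shared}), the proposal cannot establish the trichotomy or its sharpness.
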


\section{Fixed cluster points}\label{fixedclust}

Our goal in this section will be to prove Theorem~\ref{mainthm1}. The results of this section are relatively elementary, but provide an interesting insight into the the differences between the fixed case and the period two case. It will also allow us to introduxw some terminology that will be of use in the period two cluster cycle case.

\begin{prop}\label{p:adamproof}
    Suppose $F$ is a bicritical rational map with a fixed cluster point, where both critical points have period $n$. Then the two critical point components cannot be adjacent. In other words, the critical displacement cannot be $1$ or $2n-1$. 
\end{prop}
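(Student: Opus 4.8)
The plan is to rule out $\delta=1$ by constructing a Levy cycle for $F$; the case $\delta=2n-1$ then follows at once, since swapping the labels of $c_1$ and $c_2$ replaces $\delta$ by $2n-\delta$. This is the contradiction we are after: $F$, being a rational map with hyperbolic orbifold (here $|P_F|=2n\ge 4$, so I assume $n\ge 2$; the degenerate case $n=1$, in which both critical points are fixed and $F$ is M\"obius conjugate to $z\mapsto z^d$, is excluded), has no Thurston obstruction, and in the bicritical case a Thurston obstruction is the same thing as a Levy cycle.

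First I would fix the picture at the cluster point $p$. Since $p$ is fixed, every critical orbit Fatou component meets $p$, so the star has $2n$ arms; writing $U^{(i)}_j$ for the component containing $F^{\circ j}(c_i)$, the arm ending at $F^{\circ j}(c_i)$ is the $0$-internal ray of $U^{(i)}_j$, and $F$ carries it to the arm ending at $F^{\circ(j+1)}(c_i)$. For $j\in\Z/n$, let $\eta_j$ be the arc from $F^{\circ j}(c_1)$ to $F^{\circ j}(c_2)$ obtained by joining the $0$-internal rays of $U^{(1)}_j$ and $U^{(2)}_j$ at their common landing point $p$; then $F$ maps $\eta_j$ homeomorphically onto $\eta_{j+1}$, and --- this is where $\delta=1$ enters --- $\eta_0$ joins $c_1$ and $c_2$ along two \emph{adjacent} arms of the star. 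Taking thin regular neighbourhoods $\Phi_j\supset\eta_j$ with $\Phi_j\cap P_F=\{F^{\circ j}(c_1),F^{\circ j}(c_2)\}$ and putting $\gamma_j=\partial\Phi_j$, one checks that $\{\gamma_j\}_{j\in\Z/n}$ is a multicurve: each $\gamma_j$ is non-peripheral, as it separates two postcritical points from the remaining $2n-2\ge 2$; two of them are homotopic rel $P_F$ only when they enclose the same partition of $P_F$, which (the enclosed pairs being pairwise disjoint) forces the indices to agree --- the one exception being $n=2$, when all the $\gamma_j$ coincide; and disjoint representatives exist since the enclosed pairs are disjoint.

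The work is in verifying the Levy cycle condition: for each $j$, some component of $F^{-1}(\gamma_j)$ should be homotopic to $\gamma_{j-1}$ and mapped by $F$ homeomorphically onto $\gamma_j$. When $j\not\equiv 1\pmod n$ this is routine, since $\Phi_j$ then contains no critical value (the critical values being $v_i=F(c_i)=F^{\circ 1}(c_i)$): so $F^{-1}(\Phi_j)$ is a disjoint union of $d$ discs, each mapped homeomorphically onto $\Phi_j$, and the one containing $\eta_{j-1}$ (which lies in $F^{-1}(\eta_j)$) meets $P_F$ in exactly $\{F^{\circ(j-1)}(c_1),F^{\circ(j-1)}(c_2)\}$, so its boundary is homotopic to $\gamma_{j-1}$. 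If $n=2$ this already yields a Levy cycle of length one. The genuine obstacle is the step $j\equiv 1$, where $\Phi_1$ contains \emph{both} critical values, each having its critical point as its unique ($d$-fold) preimage. Here I would compute $F^{-1}(\eta_1)$ by hand: pulling back the two $0$-internal rays making up $\eta_1$ yields the $d$ internal rays of $U^{(1)}_0$ together with the $d$ internal rays of $U^{(2)}_0$, which land precisely at the $d$ points of $F^{-1}(p)$ (all of which lie on $\partial U^{(1)}_0\cap\partial U^{(2)}_0$); hence $F^{-1}(\eta_1)$ is a copy of the complete bipartite graph $K_{2,d}$ with hub vertices $c_1$ and $c_2$. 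By Riemann--Hurwitz $\chi(F^{-1}(\Phi_1))=d-2(d-1)=2-d$, and $F^{-1}(\Phi_1)$ is connected (the monodromy around $v_1$ alone is a $d$-cycle), so it is a sphere with $d$ holes and $F$ maps each of its $d$ boundary circles with degree $1$ onto $\gamma_1$. It remains to identify which of those circles is homotopic to $\gamma_0$, and this is exactly where the adjacency of the arms at $c_1$ and $c_2$ is used: since the arm ending at each $F^{\circ j}(c_i)$ with $j\ne 0$ is disjoint from $\overline{U^{(1)}_0}\cup\overline{U^{(2)}_0}$ away from $p$, and since the arms at $c_1$ and $c_2$ are consecutive at $p$, all $2n-2$ postcritical points other than $c_1,c_2$ lie in a single complementary face of $K_{2,d}$; the boundary circle of $F^{-1}(\Phi_1)$ facing that face therefore separates $\{c_1,c_2\}$ from the rest of $P_F$, i.e.\ is homotopic rel $P_F$ to $\gamma_0$. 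This yields the Levy cycle and hence the desired contradiction.
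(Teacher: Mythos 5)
Your proof constructs exactly the Levy cycle that the paper constructs: curves encircling the consecutive pairs $\{F^{\circ j}(c_1),F^{\circ j}(c_2)\}$ of postcritical points, indexed cyclically over $j\in\Z/n$, with the adjacency hypothesis $\delta=1$ doing its work at the step involving the critical values. So the approach is the same. What you add, and what the paper's terse proof (which essentially defers to Figure~\ref{f:Adamproof2}) leaves implicit, is the actual verification of the Levy-cycle condition at the one delicate step, namely pulling $\gamma_1$ back through a disc containing both critical values: your Riemann--Hurwitz count $\chi\bigl(F^{-1}(\Phi_1)\bigr)=2-d$, the identification of $F^{-1}(\eta_1)$ with a copy of $K_{2,d}$ whose non-hub vertices are the $d$ points of $F^{-1}(p)$, and the observation that adjacency of the two arms at $c_1,c_2$ forces the remaining $2n-2$ postcritical points into a single complementary face, together pin down the boundary component of $F^{-1}(\Phi_1)$ that is homotopic rel $P_F$ to $\gamma_0$ and is mapped by degree one. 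You also correctly flag the degenerate $n=1$ case (vacuous) and the $n=2$ case (a Levy cycle of length one). This is a complete and careful rendering of the argument the paper sketches.
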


\begin{proof}
      We will assume, to obtain a contradiction, that $U_{0}$ is the adjacent component which is anticlockwise of $V_{0}$ in the cyclic ordering of components around the cluster point $c$, and that $U_0$ and $V_0$ are the components containing the critical points of $F$. We will use the notation that $U_{k} = F^{\circ k}(U_{0})$ and $V_{k} = F^{\circ k}(V_{k})$ to label the critical orbit components. Let $\gamma_1$ be a curve which loops around $v_1$ and $v_2$ as in Figure~\ref{f:Adamproof2} so that the only critical orbit Fatou components intersecting with $\gamma_1$ are $U_1$ and $U_2$. Now define $\gamma_n = F^{\circ (n-1)}(\gamma_1)$ for $n = 1,\ldots,n-1$.  Then $\Gamma = \{ \gamma_1, \ldots, \gamma_n \}$ is a Levy cycle. \end{proof}

%We will use the notation that $U_{k} = F^{\circ k}(U_{0})$ and $V_{k} = F^{\circ k}(V_{k})$ to label the critical orbit components.We begin with a description of the critical Fatou components of the map $F$. The cluster point $c$ is fixed and belongs to $\partial U_{1} \cap \partial V_{1}$. Since $F$ is $d$-to-one on $U_{0}$ and $V_{0}$ and because they are Jordan domains, there must exist pre-periodic pre-images $\omega_i$, $i=1,\ldots,d-1$ of $c$ on $\partial U_{0} \cap \partial V_{0}$. We label these points in order, so that travelling anticlockwise around $\partial U_0$ from $c$, the first point we meet is $\omega_1$, then $\omega_2$ and so on until we label the pre-image clockwise from $c$ by $\omega_{d-1}$. Another way of regarding the $\omega_j$ is that $\omega_j$ is the landing point of the internal ray of angle $j/d$. From this discussion, we see that $\partial U_{0} \cap \partial V_{0}$ contains at least $d$ points. Denote the critical values of $F$ by $v_{1} \in U_{1}$ and $v_{2} \in V_{1}$.

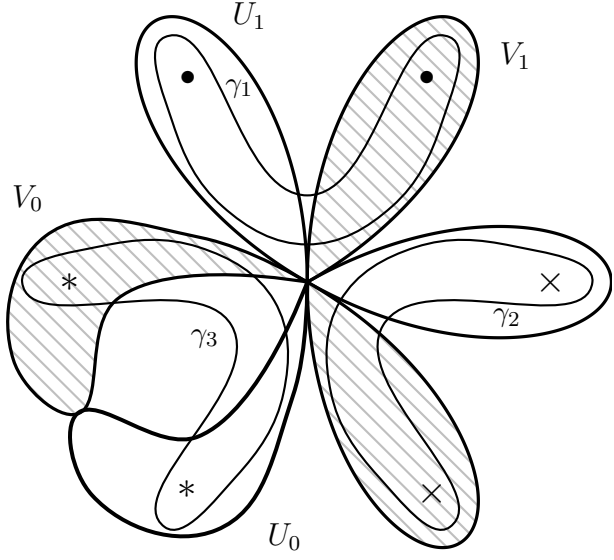
\begin{figure}[ht]
\begin{center}
\begin{pspicture*}(-4,-3.75)(4,3.75)
    \psplot[plotstyle=curve,plotpoints=500,polarplot=true,linewidth=1.2pt]%
        {90}{150}{4 3 x mul 90 add sin mul}
    \psplot[plotstyle=curve,plotpoints=500,polarplot=true,linewidth=1.2pt]%
        {150}{210}{4 3 x mul 90 add sin mul}
    \psplot[plotstyle=curve,plotpoints=500,polarplot=true,fillstyle=vlines,hatchcolor=lightgray,linewidth=1.2pt]%
        {30}{90}{4 3 x mul 270 add sin mul}
    \psplot[plotstyle=curve,plotpoints=500,polarplot=true,fillstyle=vlines,hatchcolor=lightgray,linewidth=1.2pt]%
        {90}{150}{4 3 x mul 270 add sin mul}
        \SpecialCoor
            \psccurve[curvature=1 0.25 0](0.5;90)(1.5;138)(2.75;130)(3.75;120)
                (1.15;90)(3.75;60)(2.75;50)(1.5;42)(0.5;90)
            \pscurve[curvature=0.9 1 0,fillstyle=vlines,hatchcolor=lightgray,linewidth=1.2pt]
                (0,0)(1.5;157)(3.5;170)(4;195)(3.5;210)(2.5;185)(0,0)
            \pscurve[curvature=0.9 1 0,fillcolor=white,linewidth=1.5pt]
                (0,0)(1.5;263)(3.5;250)(4;225)(3.5;210)(2.5;235)(0,0)
            \psccurve[curvature=1 0.25 0](0.5;330)(1.5;18)(2.75;10)(3.75;0)
                (1.15;330)(3.75;300)(2.75;290)(1.5;282)(0.5;330)
            \psccurve[curvature=1 0.25 0](0.5;210)(1.5;258)(2.75;250)(3.75;240)
                (1.15;210)(3.75;180)(2.75;170)(1.5;162)(0.5;210)
            \uput{16pt}[340](3.25;120){\large$\gamma_{1}$}
            \uput{16pt}[220](3.25;0){\large$\gamma_{2}$}
            \uput{5pt}[210](1.15;210){\large$\gamma_{3}$}
        \NormalCoor
    \uput{3}[60](0,0){\Large$\bullet$}
    \uput{3}[120](0,0){\Large$\bullet$}
    \uput{3}[180](0,0){\LARGE$\ast$}
    \uput{3}[240](0,0){\LARGE$\ast$}
    \uput{3}[0](0,0){\LARGE$\times$}
    \uput{3}[300](0,0){\LARGE$\times$}
    \uput{3.8}[48](0,0){\Large$V_1$}
    \uput{3.4}[102](0,0){\Large$U_1$}
    \uput{3.6}[163](0,0){\Large$V_0$}
    \uput{3.2}[265](0,0){\Large$U_0$}
\end{pspicture*}
\end{center}
\caption{The multicurve $\Gamma$.}
\label{f:Adamproof2}
\end{figure}

This result tells us that the two critical points cannot be in adjacent Fatou components in the cluster. We will actually show that these are the only cases that cannot be obtained as combinatorial data for rational maps. We now prove the first part of our classification: that one of the maps must be an $q$-rabbit. Here, a $q$-rabbit will be any map which belongs to a hyperbolic component which bifurcates off of the (unique) period 1 component in the degree $d$ multibrot set $\M_d$.

\begin{prop}\label{p:nrabbit}
	Let $f_1$ and $f_2$ be monic unicritical polynomials with a period $q$ superattracting orbit. Suppose $F \cong f_{1} \Perp f_2$ is a rational map that has a fixed cluster point. Then one of $f_{1}$ or $f_{2}$ is an $q$-rabbit.
\end{prop}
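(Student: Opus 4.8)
The plan is to argue entirely on the formal mating side. Since $f_1\Perp f_2\cong F$ is a rational map, the formal mating $f_1\uplus f_2$ is unobstructed and Thurston equivalent to $F$ (cf. the theorems of Thurston and Rees quoted above), so the fixed cluster point $c$ corresponds to an $F$-fixed ray class, which I again write $[c]$. First I would record its structure as a subset of $S^2_{f_1,f_2}$: the vertices are the landing points of the external rays it contains (each lying in $J(f_1)$ or in $J(f_2)$) and the edges are the matched pairs $\overline{R}_{f_1}(t)\cup\overline{R}_{f_2}(-t)$, so the incidence graph is \emph{bipartite}, every edge joining a $J(f_1)$-vertex to a $J(f_2)$-vertex. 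As the mating is unobstructed $[c]$ contains no loop (Lemma~\ref{TanShish}), hence is a finite tree. Because $c$ is fixed and each critical cycle is a single $q$-cycle under $F$, the set of components of a given critical cycle whose $0$-internal ray lands at $c$ is invariant under the first return map, hence is the whole cycle; so all $2q$ arms of the star are present and $F$ permutes the $q$ arms of each critical cycle around $c$ as a single $q$-cycle.

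Next I would study the homeomorphism $F|_{[c]}$ of this finite tree. It has finite order and is non-trivial. A non-trivial finite-order automorphism of a finite tree either fixes a vertex or inverts an edge; the second option is impossible here, since the tree is bipartite with the two colours on the two sides of the mating and $F$ cannot interchange a point of $J(f_1)$ with a point of $J(f_2)$. So $F|_{[c]}$ fixes a vertex $x$; say $x\in J(f_1)$, so $f_1(x)=x$. Its action on the edges at $x$ cannot be trivial (that would force it to fix all of $[c]$ by connectedness), and the external rays of $f_1$ at the repelling fixed point $x$ are permuted transitively by $f_1$, so this action is a single $q_1$-cycle, where $q_1$ is the number of rays of $f_1$ landing at $x$. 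In particular $x$ has non-zero combinatorial rotation number, hence $x=\alpha_{f_1}$, and $f_1$ lies in the $p'/q_1$-wake off the period-one component of $\M_d$ for some $p'$ coprime to $q_1$. Moreover $F^{\circ q_1}$ fixes $x$ and every edge at $x$, hence fixes $[c]$ pointwise; so $F|_{[c]}$ has order exactly $q_1$, and since the induced order-$q$ permutation of the arms of the star therefore has order dividing $q_1$, we get $q\mid q_1$.

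To finish I would bring in the combinatorial structure of $\M_d$. The polynomial $f_1$ has a period-$q$ superattracting orbit, so it is the centre of a period-$q$ hyperbolic component $H$, and $H$ lies in the $p'/q_1$-wake. In that wake the $p'/q_1$-bulb is the unique hyperbolic component of period $q_1$, it bifurcates directly off the period-one component, and every other component there has period strictly greater than $q_1$. Since $q\le q_1$, the only possibility is $H=$ the $p'/q_1$-bulb; hence $q=q_1$ and $f_1$ is a $q$-rabbit whose $\alpha$-fixed point has combinatorial rotation number $p'/q$. Had the fixed vertex $x$ instead lain in $J(f_2)$, the identical argument would give that $f_2$ is a $q$-rabbit. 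This proves the proposition.

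I expect the middle paragraph to be the main obstacle: squeezing a genuine fixed vertex of $[c]$ out of the rotational data at the cluster, checking it must be an $\alpha$-fixed point, and pinning down the number of rays there so that the period-one wake has the right denominator. The passage to the formal mating is routine, and the final step is a standard fact about $\M_d$; the subtlety is that the purely combinatorial argument on the tree only locates a candidate polynomial and a candidate wake, and one genuinely needs parameter-plane input (period growth inside wakes, together with the divisibility $q\mid q_1$) to rule out $f_1$ being some deeper period-$q$ component rather than an actual $q$-rabbit.
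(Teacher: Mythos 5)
Your argument is correct, but it reaches the conclusion by a noticeably different route from the paper's. Where the paper locates the cluster point among the $d+1$ fixed points of $F$ and rules out the $\beta$-ray classes, you extract a fixed vertex of the tree $[c]$ from the classical fixed-point lemma for finite tree automorphisms, killing the edge-inversion alternative by the bipartite structure across the equator --- a clean structural observation that works. The more substantial divergence is the final step: having established only the divisibility $q \mid q_1$, you must appeal to period growth inside wakes of $\M_d$ (the $p'/q_1$-bulb is the unique component of period $\le q_1$ in its wake) to force $q = q_1$. The paper avoids this parameter-space input entirely: the proof of Lemma~\ref{l:armslemma} in fact shows that each global arm at $\alpha_1$ contains \emph{exactly} one root point of a critical orbit Fatou component of $f_2$ (not merely at most one), so the $q$ root points and the $q_1$ global arms are in bijection and $q_1 = q$ drops out directly in the dynamical plane. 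Thus your route is sound but pays for an extra fact about $\M_d$, whereas the paper's arm-counting yields equality rather than mere divisibility and stays entirely dynamical. The worry you flag about the middle paragraph is, I think, unfounded: the fixed-vertex step, the passage from ``fixes $x$ and every edge at $x$'' to ``fixes $[c]$ pointwise'' (which implicitly uses that $F$ preserves the cyclic order of edges at each vertex of the planar tree, so that fixing one edge at a vertex forces fixing them all), and the identification $x = \alpha_{f_1}$ all go through.
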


\begin{proof}
  A degree $d$ rational map has $d+1$ fixed points (counting multiplicity), and so we see that one of the $\alpha$-fixed points, $\alpha_1$ or $\alpha_2$, must become the fixed cluster point in the rational map, since otherwise we would have $[\alpha_1] = [\alpha_2]$ and the mating would be obstructed, by Theorem~\ref{t:Tanequivalences}. Without loss of generality, let the cluster point of $F$ be $[\alpha_1]$. 

  $\alpha_1$ is the landing point of $k$ external rays and these are permuted cyclically under iteration by the map $f_1$. Since the mating is not obstructed, $[\alpha_1]$ is a tree and the global arms of the tree are permuted cyclically and homeomorphically under $f_1 \uplus f_2$. Furthermore, because the map $f_1 \uplus f_2$ is a homeomorphism on this tree, each global arm contains at most one root point of a critical orbit Fatou component of $f_2$. Since there must be $q$ root points of critical orbit Fatou components in this ray equivalence class, there are $n$ global arms at $\alpha_1$ and so $q$ external rays landing at $\alpha_1$. But then $f_1$ has a period $n$ superattracting cycle and $n$ external rays landing on its $\alpha$-fixed point, so it is an $q$-rabbit. \end{proof}

Indeed, we can say slightly more. If the rational map has a fixed cluster point with combinatorial rotation number $\rho = p/q$, then the rabbit is actually the $p/q$-rabbit, the rabbit whose $\alpha$-fixed point with rotation number $p/q$. This is easy to see by noticing that the combinatorial rotation number at the $\alpha$-fixed point is given by the ordering of the permutation of the external rays, and it is precisely these external rays which form the arms of the periodic ray class which become the cluster point.

\subsection{Properties of the non-rabbit}\label{ss:1non-rab}

To complete the classification of the maps which mate to give a rational function with a fixed cluster point, we need to study the properties of the map which partners the $q$-rabbit in the mating. Hence we know turn our attention to the polynomials which partner the rabbits in the matings. Indeed, our classification of this complementary map requires us to take into account the ordering of the external rays which land at the $\alpha$-fixed point of the rabbit. This classification will follow the considerations found in \cite{Bullett-Sentenac} and \cite{Blokhetal}. In particular, we need to provide the definition of the rotation number of an angle.

\begin{defn}\label{d:1-ARN}
	Let $\theta \in \mathbb{S}^{1}$ be periodic of period $q$ under the map $\sigma_d \colon t \mapsto dt$, so that $\theta = a/(d^{q}-1)$ for some $a$. Label the angles $d^j\theta$, $j=1,\ldots,q$ cyclically by $\theta_1,\theta_2,\ldots,\theta_n$ with $\theta_1 = \theta$. Then we say that the angle $\theta$ has (angular) rotation number $p/q$ if
\[
	d \theta_k = \theta_{k+p \mod q}
\]
 for each $k$.
\end{defn}

Notice that if $\theta$ has rotation number $p/q$, then the angles $d \theta$, $d^2 \theta, \ldots$ have rotation number $p/q$ also. Hence we can equally well talk of the orbit of angles having angular rotation number $p/q$.

\begin{defn}
 Let $f$ be a hyperbolic polynomial belonging to the degree $d$ multibrot set $\M_d$. We say the angle $\theta$ is associated with the polynomial $f$ if the external ray of angle $\theta$ lands at a (not necessarily principle) root point of the critical value component Fatou component of $f$.
\end{defn}

The following is well-known and gives an extremely useful link between the dynamical plane and the parameter plane; see for example \cite{Milnor:mandel}.

\begin{prop}
  The angle $\theta$ is associated to $f$ if and only if the parameter ray of angle $\theta$ lands at a (not necessarily principle) root point of the hyperbolic component containing the map $f$.
\end{prop}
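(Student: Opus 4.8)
The plan is to obtain this from the standard Douady--Hubbard dictionary between dynamical and parameter external rays of $\M_d$ and the description it gives of the boundary of a hyperbolic component; everything used below is contained in \cite{Milnor:mandel} (and \cite{DouadyHubbard:Orsay1,DouadyHubbard:Orsay2} for $d=2$), so in practice one simply cites the statement, but here is how I would assemble it. Two facts are needed. \emph{Stability:} for $\theta$ periodic under $\sigma_d$ and any parameter $c$ at which $R_{f_c}(\theta)$ lands at a repelling periodic point, this landing point --- together with the cyclic pattern of all external rays landing at it --- depends holomorphically on $c$, and in particular is combinatorially constant as $c$ ranges over a single hyperbolic component $W$. \emph{Parabolic landing:} the parameter ray $R_{\M_d}(\theta)$ lands at a parameter $c_\theta$ at which $f_{c_\theta}$ has a parabolic cycle whose ray period equals the period of $\theta$, with $R_{f_{c_\theta}}(\theta)$ landing at the characteristic point of that cycle; moreover $c_\theta$ is the root of a unique hyperbolic component $W_\theta$, and when $c$ is pushed from $c_\theta$ into $W_\theta$ the parabolic cycle breaks into an attracting cycle --- whose critical-value immediate basin component we call $U_v(c)$ --- and a repelling cycle on the boundaries of those basin components, one point $z(c)$ of which lies on $\partial U_v(c)$ and carries $R_{f_c}(\theta)$. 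Finally I will use that distinct hyperbolic components have distinct roots and distinct sets of characteristic parameter-ray angles.

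Granting this, the implication $(\Leftarrow)$ is short. If $R_{\M_d}(\theta)$ lands at a root point $r_W$ of the hyperbolic component $W$ containing $f=f_c$, then $r_W=c_\theta$ and $W=W_\theta$ in the notation above, so $R_{f_c}(\theta)$ lands at $z(c)\in\partial U_v(c)$. Since $z(c)$ is fixed by the first-return map $f_c^{\circ n}$ on $\overline{U_v(c)}$ (where $n$ is the period of $W$), it is one of the at most $d-1$ root points of $U_v(c)$, and hence $\theta$ is associated to $f$.

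For $(\Rightarrow)$, suppose $\theta$ is associated to $f=f_c$, so that $R_{f_c}(\theta)$ lands at a root point $z\in\partial U_v(c)$, which is a repelling periodic point, and $\theta$ is periodic under $\sigma_d$. By Stability, as $c'$ runs over the hyperbolic component $W\ni f$ the ray $R_{f_{c'}}(\theta)$ keeps landing at the holomorphic continuation $z(c')$ of $z$, still a boundary fixed point of the first-return map on $\overline{U_v(c')}$; letting $c'$ tend to the root $r_W$ of $W$, the cycle of $z(c')$ merges with the attracting cycle, so $r_W$ is a parabolic parameter of the appropriate ray period at whose characteristic point $R_{f_{r_W}}(\theta)$ lands. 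By Parabolic landing, $R_{\M_d}(\theta)$ lands at a parameter with exactly these properties, which is the root of a hyperbolic component having $\theta$ among its characteristic angles; by the injectivity statement this component must be $W$, so $R_{\M_d}(\theta)$ lands at $r_W$.

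The step I expect to be the real obstacle --- and the reason one defers to \cite{Milnor:mandel} --- is the Parabolic landing fact: that the parameter ray of a periodic angle actually lands, and lands precisely at the parabolic parameter singled out by the dynamical plane, is the heart of the Douady--Hubbard correspondence and rests on parabolic-perturbation (parabolic implosion) techniques. Two bookkeeping points also need care. In the satellite case the parabolic cycle at $r_W$ has period a proper divisor of the period of $W$, so $z(c)$ may have period strictly smaller than the period of $U_v$ --- this already happens for low-period satellites such as the period-two component of $\M_2$ --- and one must check that the root point of $U_v$ is nonetheless being detected. For $d\ge 3$, $U_v$ carries up to $d-1$ root points, at internal angles $j/(d-1)$; one must then track which of these $R_{f_c}(\theta)$ lands at and verify that the corresponding parameter rays all co-land at the single root of $W$, which is exactly why the definition of an associated angle is phrased with a ``not necessarily principal'' root point.
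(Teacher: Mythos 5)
Your argument is correct and is essentially the approach the paper intends: the paper offers no proof at all, simply citing the statement as well-known from \cite{Milnor:mandel}, and your assembly of stability of repelling ray portraits over a hyperbolic component, landing of periodic-angle parameter rays at the corresponding parabolic root, and uniqueness of roots is exactly the standard Douady--Hubbard/Milnor argument being invoked there. Your closing caveats (satellite ray periods, the $d-1$ root points for $d\ge 3$) correctly identify the bookkeeping that the cited reference handles.
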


In the sequel, we use the notation $A_k$ to denote an arc of the form $(k/(d-1),(k+1)/(d-1)) \subset S^1$.

\begin{lem}\label{hangles}
 Suppose $F = f \Perp h$ is a rational map with a fixed cluster point and $f_1$ is the $p/q$-rabbit. Then one of the angles associated with $h$ has rotation number $(q-p)/q$. Moreover, in degree $d$, all the angles in the forward orbit of this angle lie in an arc $A_j$.
\end{lem}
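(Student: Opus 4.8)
Let me think about this. We have a rational map $F = f \Perp h$ with a fixed cluster point, where $f$ is the $p/q$-rabbit. We need to show one of the angles associated with $h$ has rotation number $(q-p)/q$, and moreover in degree $d$, all angles in the forward orbit of this angle lie in a single arc $A_j$.

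Key setup: The cluster point is the ray class $[\alpha_1]$ where $\alpha_1$ is the $\alpha$-fixed point of $f$. There are $q$ external rays of $f$ landing at $\alpha_1$, and these are the arms of the tree $[\alpha_1]$. By Proposition~\ref{p:nrabbit}'s proof, each global arm at $\alpha_1$ contains exactly one root point of a critical-orbit Fatou component of $h$ (since there are $q$ such root points distributed among $q$ arms).

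So: the root point of one of the critical-orbit Fatou components of $h$ lies on a specific global arm. The external ray(s) of $h$ landing at that root point — call one such angle $\theta$ — that's an "angle associated with $h$" if it lands at a root point of the critical *value* component. Hmm, need to be careful: associated means lands at root point of the critical value component Fatou component. The critical value component of $h$ is $h(U_0^h)$ where $U_0^h$ is the critical point component. So I want the angle landing at the root point on the arm that... let me just track the combinatorics.

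Here's the plan. The $q$ rays of $f$ landing at $\alpha_1$ have angles that are permuted by $\sigma_d$ with rotation number $p/q$ (that's what makes $f$ the $p/q$-rabbit). In the ray class $[\alpha_1]$, going around anticlockwise, we alternate between rays of $f$ and "subtrees" hanging into the $h$ side, each subtree containing a root point of a critical-orbit component of $h$. The map $f \Perp h$ acts on this tree, rotating the arms. Now here's the key: when we glue $\widetilde{\C}_f$ to $\widetilde{\C}_g$ (here $g = h$), the identification is $\infty\cdot e^{2\pi i t} \sim \infty \cdot e^{-2\pi i t}$ — it's orientation-reversing. So the cyclic order of the $h$-rays (as seen from the $h$ side) is the *reverse* of their cyclic order as seen in the combined tree. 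Combined with the fact that $f$ rotates its arms by $p/q$ in the anticlockwise sense, the induced rotation on the $h$-arms as seen from $h$'s own dynamical plane is by $-p/q = (q-p)/q$. So the relevant orbit of angles of $h$ — the angles landing at the root points of the critical-orbit components, which are permuted by $\sigma_d$ since $h$ maps its critical cycle around — has rotation number $(q-p)/q$. One of these root points is the root of the critical value component (namely $h(U_0^h) = U_1^h$), and the angle landing there is by definition associated with $h$; so that associated angle has rotation number $(q-p)/q$.

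For the "moreover" part: the $q$ angles of $h$ in this orbit all land at points in the ray class $[\alpha_1]$, which on the $h$ side sits "between" the images of the $f$-rays. The $f$-rays landing at $\alpha_1$ (on the $f$ side) have angles; under the gluing these become angles on the $h$ side forming the complementary structure. The $q$ rays of $f$ at $\alpha_1$ divide $S^1$ into $q$ arcs, and since $f$ is a rabbit (bifurcating off the period-1 component), there is a distinguished arc: the $\beta$-fixed point has angle $0$, and all $q$ rays of the $\alpha$-fixed point lie strictly... hmm. Actually the cleaner statement: for the $p/q$-rabbit, the $q$ external angles at $\alpha$ are all contained in a single arc $A_j = (j/(d-1), (j+1)/(d-1))$ — no wait, that's for $f$. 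What we want is about $h$. Let me reconsider: the rays of $f$ at $\alpha_1$, after the orientation-reversing gluing, sit in $h$'s plane as $q$ rays; the arms of the $h$-tree (the subtrees containing the critical-orbit root points) are interleaved with these. All $q$ root points, hence all $q$ angles of $h$ landing at them, lie in the arcs *between* consecutive glued-$f$-rays. But the structure of the rabbit forces all $q$ of the glued $f$-ray-angles to avoid one particular arc $A_j$ (the one "facing" the $\beta$ fixed point / the gluing circle near $0$), so the $h$-angles are confined to a region which, once we chase through, all lies in one $A_j$.

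**The main obstacle.** The hard part is the "moreover" statement and making the orientation-reversal bookkeeping rigorous: one must carefully set up the cyclic orderings on $\widetilde{\C}_f$, $\widetilde{\C}_h$, and $S^2_{f,h}$, track how the $q$ arms of $[\alpha_1]$ interleave between $f$-rays and $h$-subtrees, and verify that the rotation number flips sign while the "confined to one arc $A_j$" property is inherited from the rabbit structure of $f$ (equivalently, from the fact that all $q$ of the relevant $f$-angles at $\alpha_1$ lie in the complement of a single arc, which is exactly the combinatorial signature of bifurcating off the period-$1$ component). I expect the proof to proceed by: (1) identify $[\alpha_1]$ as a tree with $q$ arms and describe the interleaving; (2) use the orientation-reversing gluing to deduce the $h$-side rotation number is $(q-p)/q$; (3) invoke the rabbit combinatorics of $f$ to locate all the $h$-angles in one arc $A_j$; (4) identify one of the root points as the critical value component's root, so its landing angle is genuinely "associated with $h$."
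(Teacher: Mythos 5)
Your overall approach matches the paper's, and the four steps you list at the end are exactly the right ones. But both of the pieces you flag as ``the main obstacle'' are in fact quite simple once set up correctly, and your sketch leaves a genuine gap in the step that the paper actually leans on.

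The key technical lever in the paper's proof is that $\alpha_1$ is the \emph{only} periodic biaccessible point of $J(f)$ (Wittner, Claim 10.1.1, cited in the paper). Your proposal says each global arm of $[\alpha_1]$ ``contains exactly one root point of a critical-orbit Fatou component of $h$'' and then talks of ``subtrees hanging into the $h$ side,'' but never pins down \emph{where} on the arm that root point sits. The biaccessibility fact lets you conclude that the arm cannot branch at any $J(f)$ point other than $\alpha_1$, which forces the root point to be exactly $\gamma_h(-\theta_i)$, the landing point of the $h$-ray glued to $R_f(\theta_i)$. With that pinned down, the rest is arithmetic: the angles $\theta_1,\dots,\theta_q$ landing at $\alpha_1$ have rotation number $p/q$, so the angles $-\theta_1,\dots,-\theta_q$ have rotation number $(q-p)/q$, and one of them lands at the root of the critical value component of $h$, hence is associated to $h$. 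For the ``moreover'' you do not need to chase interleavings or worry about which arc faces the $\beta$-fixed point: the rabbit angles $\theta_i$ all lie in a single arc $A_k = (k/(d-1),(k+1)/(d-1))$, and negation maps $A_k$ onto $A_{d-k-2}$, so the $-\theta_i$ all lie in $A_{d-k-2}$. Your ``expect the proof to proceed by (1)--(4)'' is a correct outline; what is missing is the biaccessibility argument in step (1)/(4), and the realization that steps (2) and (3) reduce to the two one-line observations $\theta_i\mapsto -\theta_i$ flips rotation number and $A_k\mapsto A_{d-k-2}$ under negation.
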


\begin{proof}
  All rays landing on the $\alpha$-fixed point of $f$ have angles $\theta_i$ (in cyclic order) with rotation number $p/q$ and lie in arc $A_k$ for some $k \in \{ 0,1,\ldots,d-2 \}$. Since the only periodic biaccessible point on $J(f_1)$ is $\alpha_1$, (\cite{Wittner}, Claim 10.1.1) the root points of the critical orbit Fatou components must be the landing points of the rays of angles $-\theta_i$. Each of these angles has rotation number $(q-p)/q$, and precisely one of them is the angle of the ray landing at a root point of the critical value Fatou component of $h$. Since it lands at a root point of the critcal value component of $h$, it is one of the angles associated with $h$. The second statement simply follows from the fact that all the angles $\theta$ of the external rays landing at the $\alpha$-fixed point of an $n$-rabbit all lie in some arc $A_k$, and so the angles $-\theta$ must also lie in the arc $A_{d-k-2}$. 
\end{proof}

\begin{prop}\label{p:allobtained}
  All (admissible) combinatorial data can be obtained by matings in (precisely) $2(d-1)$ ways.
\end{prop}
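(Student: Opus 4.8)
The plan is to show that the combinatorial data $(\rho,\delta) = (p/q,\delta)$ of a bicritical rational map with a fixed cluster point is precisely the data that is realised, and to count the matings producing each such datum. First I would settle \emph{admissibility}: by Proposition~\ref{p:adamproof} the critical displacement cannot be $1$ or $2q-1$, and (as noted before Definition~\ref{d:1-ARN}) $\delta$ is always odd; so the admissible data in degree $d$ are the pairs $(p/q,\delta)$ with $\gcd(p,q)=1$, $\delta$ odd, $3 \le \delta \le 2q-3$. The bulk of the argument is to exhibit, for each such pair, a mating realising it, and to show there are exactly $2(d-1)$ of them. By Proposition~\ref{p:nrabbit} one polynomial is forced to be a $q$-rabbit, and by the remark following that proposition and by Lemma~\ref{hangles} it must be the $p/q$-rabbit $f$, whose $\alpha$-fixed point carries $q$ external rays of angles $\theta_1,\dots,\theta_q$ (cyclic, rotation number $p/q$) all lying in a single arc $A_k$. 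The complementary map $h$ is governed, again by Lemma~\ref{hangles}, by one of the angles $-\theta_i$, which lie in the arc $A_{d-k-2}$ and have rotation number $(q-p)/q$.

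The realisation step is then organised as follows. Having fixed the arc $A_k$ for the rabbit's rays (this is where one factor of $d-1$ enters: $k$ ranges over $\{0,1,\dots,d-2\}$), the root points of the $q$ critical-orbit Fatou components of $f$ sit on the rays of angles $-\theta_i$, so in the formal mating $f \uplus h$ the periodic ray class $[\alpha_1]$ is a $q$-armed tree, and each global arm must contain exactly one root point of a critical-value-component of $h$ (by Lemma~\ref{l:armslemma} and the homeomorphism property on the tree). The map $h$ is chosen by specifying which one of its associated angles is glued into which arm; since the angular rotation number of $h$ must be $(q-p)/q$ and the rabbit's arms are cyclically permuted with rotation $p/q$, the gluing is rigidly determined once we fix \emph{which} arm receives the critical value of $h$ — and the number of such choices is $q$, but after accounting for the cyclic symmetry of the rabbit's own arms (conjugation by the rotation of the tree) these collapse, leaving the freedom of choosing $\delta$ itself. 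Concretely: the critical displacement $\delta$ is, by definition, the combinatorial distance from the first critical point to the second, equivalently from the critical value of $f$ to the critical value of $h$ along the star; so prescribing $\delta$ is the same as prescribing the position of $h$'s critical value among the arms. That every admissible $\delta$ can actually be achieved is verified by exhibiting the corresponding angle $-\theta_i$ and checking, via Theorem~\ref{t:Tanequivalences}, that $[\alpha_f] \ne [\alpha_h]$ so the mating is unobstructed — the key point being that $h$ is never in a limb conjugate to the rabbit's, because $h$'s associated angles sit in a different arc $A_{d-k-2}$ (and if $k = d-k-2$, i.e. $d$ odd and $k=(d-2)/2$, a separate check that the relevant parameter rays still land at distinct root points).

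This gives, for each of the $d-1$ choices of arc and for an admissible $\delta$, at least one mating; I would then argue the count is \emph{exactly} $2(d-1)$ by showing the remaining factor of $2$ comes from the choice of marking of the critical points of $F$ (swapping $c_1 \leftrightarrow c_2$ sends $\delta \mapsto -\delta$ and interchanges the roles of the two rays-classes, as recorded after the fixed-case Definition of $\delta$), while there is no further freedom: the pair of polynomials and the identification are determined by the combinatorial data together with the arc and the marking, because the rabbit is determined by $(p/q, A_k)$ and $h$ is then determined by its associated angle in $A_{d-k-2}$, whose forward orbit is forced to lie in that single arc. The main obstacle I anticipate is the bookkeeping in the realisation step — making precise the bijection between ``admissible $\delta$'' and ``choice of $h$'s critical-value arm modulo the rabbit's cyclic symmetry,'' and confirming that the parity/range constraints on $\delta$ correspond exactly to the angles $-\theta_i$ that actually arise as associated angles of a genuine (hyperbolic, postcritically finite) polynomial $h$ in the correct sublimb — rather than the unobstructedness check, which is essentially immediate from Theorem~\ref{t:Tanequivalences} once the arcs are separated.
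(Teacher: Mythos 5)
Your proposal matches the paper's proof in both structure and content: the factor of $d-1$ comes from the $d-1$ choices of arc $A_k$ containing the rabbit's rays, the factor of $2$ from whether the rabbit appears first or second in the mating (which is what your ``marking'' observation amounts to, since swapping $c_1 \leftrightarrow c_2$ sends $\delta \mapsto -\delta$ and exchanges the two slots), and both the realisation step (pick the angle $-\theta_k$ to prescribe $\delta$, invoke Theorem~\ref{t:Tanequivalences} for unobstructedness) and the sharpness step (uniqueness of the orbit with rotation number $(q-p)/q$ in each arc, which the paper cites from Blokh et al.) are handled as in the paper. One small slip: the coincidence $k = d-k-2$ requires $d$ even, not $d$ odd.
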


\begin{proof}
  First we will construct a rational map $F \cong f \Perp h$ with combinatorial data $(\rho, \delta)$. $\rho$ is of the form $p/q$, so we take $f$ to be the $p/n$-rabbit whose corresponding angles lie in the arc $A_0$. We now make a judicious choice for our complementary map $h$. Label the angles of the rays landing at the $\alpha$-fixed point of $f$ in cyclic order by $\theta_1,\ldots,\theta_q$, starting with $\theta_1$ as the angle of the ray which lands anticlockwise from the critical value component of $f$. To get a critical displacement of $\delta = 2k-1$, we choose $h$ to be the rational map corresponding to the angle $-\theta_k$. The map $h$ does not lie in the conjugate limb to $f$ in $\M_d$ and so the mating exists and the rational map has combinatorial data $(\rho, \delta)$.

  To see that all combinatorial data is obtained with multiplicity $2(d-1)$, observe that the $p/q$-rabbit $f$ that was chosen in the previous paragraph could equally well have been the one whose corresponding angles lay in the arc $A_k$, with $k \in \{0,\ldots,d-2\}$. Then a similar argument allows us to find a complementary map $h$ which means $f \Perp h$ has combinatorial data $(p/q,\delta)$. Moreover, we can carry out the mating $h' \Perp f$, where $h'$ is the map for which $f \Perp h'$ has critical displacement $-\delta$ and again this holds for any choice of the $p/q$-rabbit $f$. This gives us the $2(d-1)$ distinct ways of forming the mating.

  To see that $2(d-1)$ is sharp, observe that by results of \cite{Blokhetal} (in particular, Theorem 2.8) there is precisely one orbit of angles in each arc $A_k$ which has rotation number $(q-p)/q$\footnote{In degree 2, this essentially says the well-known fact that there is a unique $p/q$-rabbit.}. These angles are precisely the angles $-\theta_1,\ldots,-\theta_q$ above. Two of these (namely $-\theta_1$ and $-\theta_n$) correspond to the $(q-p)/q$-rabbit and each of the others construct a rational map with a distinct critical displacement when mated with the appropriate $p/q$-rabbit. So no other maps can be mated with a $p/q$-rabbit to create a rational map with a fixed cluster point.
\end{proof}

\begin{rem}
  It follows from the above proof and by a simple counting argument that the maps $h$ such that $f \Perp h$ has a fixed cluster cycle (for some choice of the $p/q$-rabbit $f$) are precisley those maps $h$ who have a corresponding angle $\theta$ which has angular rotation number $p/n$ and whose orbit $\{ \theta, d\theta, \ldots , d^{q-1}\theta \}$ all lie in an arc $A_k$. 
\end{rem}

\subsection{Proof of Theorem~\ref{mainthm1}}

We now prove the first of our two main theorems. First we state a result from \cite{Thurstoneq}.

\begin{thm}\label{Fixedcase}[Theorem~A, \cite{Thurstoneq}]
  Suppose $F$ and $G$ are bicritical rational maps (with labelled critical points) with fixed cluster cycles with the same combinatorial data. Then $F$ and $G$ are equivalent in the sense of Thurston.
\end{thm}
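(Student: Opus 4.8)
The plan is to extract from the combinatorial data an invariant finite graph carrying a combinatorial self-map, to observe that $F$ and $G$ carry isomorphic such graphs, and then to upgrade this isomorphism to a Thurston equivalence by a pullback argument. Since $F$ and $G$ are already known to be rational, Thurston's theorem will be used only through the fact that neither admits a Levy cycle (by the bicritical criterion recalled above), and this is precisely what will force the pullback to converge. It is worth noting at the outset that ``the same combinatorial data'' incorporates the labelling of the critical points, so all homeomorphisms produced will respect the markings and the relabelling ambiguity $\delta\mapsto-\delta$ will not intervene.

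First I would isolate the invariant ``star''. Let $X_F$ be the union of the fixed cluster point $c_F$, the $2n$ angle-$0$ internal rays of the critical-orbit Fatou components that meet at $c_F$ (where $n$ is the common period of the two critical cycles), the critical-orbit points on those rays, and, to capture all of $P_F$, the internal rays joining each critical point to the root of its component. Since $c_F$ is fixed and $F$ sends angle-$0$ rays to angle-$0$ rays, $F(X_F)=X_F$. The map $F$ permutes the $2n$ arms at $c_F$, and being orientation preserving it acts there as a rotation $e_j\mapsto e_{j+s}$ of $\Z/2n$; comparing with Definition~\ref{d:CRN} forces $s\equiv 2p\pmod{2n}$ when $\rho=p/n$, so one critical cycle occupies the even-indexed arms, the other the odd-indexed arms, and the position of the second critical point is the odd integer $\delta$. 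Hence the marked pair $(X_F,F|_{X_F})$, together with the local degree $d$ at each critical point, is determined up to an orientation-preserving marked graph isomorphism by $(\rho,\delta)$ alone; the same description applies to $(X_G,G|_{X_G})$, and I fix an isomorphism $\iota$ intertwining the two self-maps and the markings.

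Next I would extend $\iota$ to a homeomorphism $\psi_0\colon S^2\to S^2$ with $\psi_0(X_F)=X_G$ that also conjugates $F$ to $G$ on each critical-orbit Fatou component (possible since on each such disk both maps are conjugate to $z\mapsto z^d$ in B\"ottcher coordinates) and carries $P_F$ to $P_G$ compatibly. Then I would run the usual pullback: choose inductively homeomorphisms $\psi_{k+1}$ with $G\circ\psi_{k+1}=\psi_k\circ F$ and $\psi_{k+1}|_{P_F}=\psi_k|_{P_F}$, which exist because $\psi_k$ matches critical points with critical points and critical values with critical values. By invariance of the star, $\psi_k$ carries the nested preimage graphs $F^{-k}(X_F)$ onto $G^{-k}(X_G)$; for a hyperbolic postcritically finite map these graphs eventually span $S^2$ with complementary pieces that are Jordan domains or annuli, each meeting $P_F$ in at most one point and with diameters shrinking in the hyperbolic metric of $\CC\setminus P_F$. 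It follows that $\psi_k$ and $\psi_{k+1}$ are isotopic rel $P_F$ for all large $k$, and putting $\phi_1=\psi_k$, $\phi_2=\psi_{k+1}$ yields the required Thurston equivalence $F\cong G$.

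The principal obstacle is the control of this pullback, namely proving that consecutive lifts eventually become isotopic rel $P_F$, equivalently that the iterated preimage graphs $F^{-k}(X_F)$ separate $P_F$ and shrink. This is where hyperbolicity and the absence of Levy cycles are genuinely used. To make it precise I would first enlarge $X_F$ by a bounded collection of further internal rays so that every complementary component of the enlarged graph is a disk or annulus containing at most one postcritical point, verify that the enlargement is still forward invariant up to isotopy rel $P_F$ with combinatorics still pinned by $(\rho,\delta)$, and then apply the standard expansion estimate for hyperbolic postcritically finite maps to conclude that $(\psi_k)$ is eventually isotopically constant rel $P_F$. The remainder is bookkeeping with the cyclic arm permutation identified above.
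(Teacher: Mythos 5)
First, a point of reference: the paper you are working from does not prove this statement at all — Theorem~\ref{Fixedcase} is imported verbatim as Theorem~A of the companion paper \cite{Thurstoneq}, so there is no internal proof to compare against, and your proposal has to be judged as an independent argument. Its first half is sound: the star $X_F$ is forward invariant, the action on the $2n$ arms is a rotation by $2p$ with the marked critical endpoints at positions $0$ and $\delta$, and hence $(X_F,F|_{X_F})$ with its marking is determined by $(\rho,\delta)$ (together with the degree, which you correctly treat as part of the data). The pullback scheme is also the right framework.

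The genuine gap is in your convergence step. The mechanism you invoke — that the complementary pieces of $F^{-k}(X_F)$ have shrinking diameters in the hyperbolic metric of $\CC\setminus P_F$ — fails for the pieces adjacent to the Fatou centres: those centres are postcritical points, hence punctures of $\CC\setminus P_F$, and a sector of a critical-orbit Fatou component cut off by two internal rays accumulating at such a puncture has infinite hyperbolic diameter for every $k$. The expansion enjoyed by a hyperbolic postcritically finite map lives on the Julia set (or in the orbifold metric) and does not control these Fatou sectors, so ``$\psi_k$ eventually isotopically constant rel $P_F$'' does not follow as stated, and your proposed enlargement of $X_F$ does not repair this. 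Fortunately the whole expansion apparatus is unnecessary here: in the fixed-cluster case every point of $P_F$ already lies on $X_F$ (the critical orbit points are the free endpoints of the arms), and $X_F$ is a finite tree, so $S^2\setminus X_F$ is a single open disk containing no postcritical point. Choose $\psi_0$ conjugating $F|_{X_F}$ to $G|_{X_G}$ and extend over this disk by the Schoenflies/Alexander trick; the lift $\psi_1$ with $G\circ\psi_1=\psi_0\circ F$ can be chosen to agree with $\psi_0$ on $X_F$ (both restrictions to $X_F$ are lifts through $G$ of $\psi_0\circ F|_{X_F}$), and then one application of the Alexander trick on the complementary disk gives $\psi_1\simeq\psi_0$ rel $X_F\supset P_F$. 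Taking $\phi_1=\psi_0$, $\phi_2=\psi_1$ yields the Thurston equivalence after a single lift; in particular, the absence of Levy cycles plays no role in this direction, contrary to what your opening paragraph suggests.
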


\begin{proof}[Proof of Theorem~\ref{mainthm1}]
 By Proposition~\ref{p:nrabbit}, one of the maps must be an $n$-rabbit, $f$, and since combinatorial rotation numbers are preserved by matings and the fact that the $\alpha$-fixed point of $f$ the cluster point, the combinatorial rotation number of the $\alpha$ fixed point of $f$ must be $p/n$ as well. By Lemma~\ref{hangles}, one of the angles associated to $h$ must have rotation number $(n-p)/n$. Since all admissible combinatorial data can be obtained by matings (Proposition~\ref{p:allobtained}), then by Theorem~\ref{Fixedcase}, all rational maps with fixed cluster points are matings.
\end{proof}

%In light of the above theorem, we see that each rational map $F$ with a fixed cluster point can be obtained in precisely two ways. Suppose that $F$ is  Denoting $f$ by the relevant $n$ rabbit, then we have $f \Perp h_1 \cong F \cong h_2 \Perp f$ for some polynomials $h_1$ and $h_2$. The reason for this is simple. The critical displacement can be measured in two ways, depending on which critical point was chosen to be the first critical point. Hence, without marking, a rational map with a fixed cluster point could have critical displacement $k$ or $2n-k$.

\section{Period two cluster cycles}

The simplicity of the results in the fixed cluster point case perhaps could lead the reader to believe that the analogous results hold in the period two case. However, there is actually an increased level of complexity in the period two case, as we will discover. We will only consider the quadratic case in this section: a brief discussion of the higher degree case (and the difficulties in tackling it) is given after Theorem~\ref{t:preciselyone}. We know from the previous section that if we wanted to construct a rational map with a fixed cluster point that has combinatorial roation number $\rho$, then we need one of the maps to be the $\rho$-rabbit. We do not get such an exact statement. However, we can differentiate between the two polynomials: one of them must belong to the $1/2$-limb.

\begin{thm}\label{t:preciselyone}
Let $F \cong f_{1} \Perp f_{2}$ be a degree 2 rational map with a period 2 cycle of cluster points. Then precisely one of the maps $f_{1}$ or $f_{2}$ belongs to $\M_{1/2}$, the $1/2$-limb of $\M$.
\end{thm}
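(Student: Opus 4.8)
The plan is to prove the statement in two halves: \emph{at most one} of $f_1,f_2$ lies in $\M_{1/2}$, and \emph{at least one} does.

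\emph{At most one.} In degree $2$ the limb $\M_{p/q}$ is conjugate, in the sense of condition (4) of Theorem~\ref{t:Tanequivalences}, to $\M_{(q-p)/q}$, so $\M_{1/2}$ is the unique self-conjugate limb. If both $f_1$ and $f_2$ belonged to $\M_{1/2}$ they would lie in conjugate limbs, whence by Theorem~\ref{t:Tanequivalences} the formal mating $f_1\uplus f_2$ would carry a Levy cycle, and a Levy cycle is a Thurston obstruction (its transition matrix dominates a cyclic permutation matrix, so the leading eigenvalue is at least $1$). This contradicts the hypothesis that $F\cong f_1\Perp f_2$ is a rational map, so at most one of the two polynomials lies in $\M_{1/2}$.

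\emph{At least one.} Let $x$ be a cluster point of $F$; then $[x]$ is a ray class of period $2$ in $f_1\uplus f_2$, and $F^{\circ2}$ rotates the star of the cluster with combinatorial rotation number $\rho=p/q$. I first observe that $\rho\ne0$: if $q=1$ then each critical cycle of $F$ would have period $2$, forcing $f_1=f_2=z^2-1$, the centre of the period-two component, which the previous paragraph excludes. So $q\ge2$ and $p\ne0$. By Lemma~\ref{l:matingrayclass} the ray class $[x]$ contains at most one periodic branch point with non-zero combinatorial rotation number, and since the cluster rotates non-trivially it contains exactly one, say $w$; Lemma~\ref{l:armslemma} further shows that the $q$ critical-orbit Fatou components of either critical cycle meeting $x$ sit one apiece at the ends of the $q$ global arms at $w$, which $F^{\circ2}$ permutes cyclically with rotation number $p/q$. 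Now $w$ is a periodic point of $f_1$ or of $f_2$, say of $f_1$, and it cannot be fixed by $f_1$, for otherwise $[w]=[x]$ would be a \emph{fixed} ray class rather than one of period $2$. Hence $w$ is a period-$2$ point of $f_1$ that is the landing point of $q\ge2$ external rays, cyclically permuted by $f_1^{\circ2}$ with non-zero rotation number $p/q$.

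It remains to locate $f_1$ in parameter space; this is the exact analogue, one level down from the $\alpha$-fixed point, of the characterisation of $p/q$-rabbits used in Proposition~\ref{p:nrabbit}. By the standard combinatorics of orbit portraits and parameter wakes for quadratic polynomials (see \cite{Milnor:mandel}; compare the discussion after Definition~\ref{d:1-ARN}), a quadratic polynomial carrying such a period-$2$ portrait is precisely one lying in the $p/q$-sub-limb of the period-two hyperbolic component of $\M$. That component has its root at $c=-3/4$, the common landing point of the parameter rays $R_{\M}(1/3)$ and $R_{\M}(2/3)$, so the component and every sub-limb hanging from it lie in the $1/2$-wake, hence in $\M_{1/2}$. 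Therefore $f_1\in\M_{1/2}$, which together with the first half gives that precisely one of $f_1,f_2$ lies in $\M_{1/2}$. The step I expect to be the main obstacle is this last one: passing from the dynamical fact that $f_1$ has a period-$2$ point which is the landing point of several rays with rotation number $p/q$ to the parameter-plane statement that $f_1$ lies in a sub-limb of the period-two component. One must quote, or reprove, enough of the wake machinery --- essentially that the characteristic arc of such a period-$2$ orbit portrait lies in $(1/3,2/3)$, so that the associated parameter wake lies in the $1/2$-wake --- while the subsidiary point that the rotational branch point $w$ is genuinely a period-$2$ point of a single one of the two polynomials is handled above via Lemmas~\ref{l:matingrayclass} and \ref{l:armslemma}.
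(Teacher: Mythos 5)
Your proof of the \emph{at most one} half is the same as the paper's: both polynomials in $\M_{1/2}$ means they lie in conjugate limbs, and Theorem~\ref{t:Tanequivalences} obstructs the mating.

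For the \emph{at least one} half you take a genuinely different route from the paper. The paper argues by counting period-two cycles: if both $f_1$ and $f_2$ lie outside $\M_{1/2}$, then in each $J(f_i)$ the rays $R(1/3)$ and $R(2/3)$ land at distinct points forming a period-two orbit; under the mating these glue into the two singleton classes $\{\gamma_{f_1}(1/3)\sim\gamma_{f_2}(2/3)\}$ and $\{\gamma_{f_2}(1/3)\sim\gamma_{f_1}(2/3)\}$, which form a period-two cycle of $F$ disjoint from the cluster cycle, contradicting the fact that a quadratic rational map has at most one period-two cycle. This is short, purely local, and requires no rotation-number analysis. Your argument instead dissects the ray class $[x]$ of the cluster point, locates a rotational branch point $w$ inside it via Lemmas~\ref{l:matingrayclass} and \ref{l:armslemma}, identifies $w$ as a period-two point of $f_1$ carrying $q\ge2$ rays with rotation number $p/q$, and then quotes orbit-portrait and wake machinery from \cite{Milnor:mandel} to place $f_1$ in the $p/q$-sublimb of the period-two component, hence in $\M_{1/2}$. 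What that buys you is that you simultaneously establish the substance of Proposition~\ref{p:2clusttunedrabbit} (which the paper proves separately, also by invoking the uniqueness of the period-two cycle), but the price is heavier machinery and one step that is asserted rather than proved: you claim that ``since the cluster rotates non-trivially'' the ray class \emph{contains} a periodic branch point $w$ with non-zero rotation number. Lemma~\ref{l:matingrayclass} only gives at most one such point, and Lemma~\ref{l:armslemma} is conditional on the existence of one; neither supplies existence. You need an additional tree argument --- for instance, that the homeomorphism $F^{\circ2}$ of the finite tree $[x]$ must fix a vertex (it cannot flip an edge, since the two endpoints of an edge lie in different Julia sets), that this fixed vertex is not a leaf, and that a trivial rotation at every branch point is incompatible with the $q$ root points of $f_1$'s critical cycle being permuted with rotation number $p/q\ne0$. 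This is fillable, but as written it is a gap, and it is precisely the step that the paper's counting argument lets one avoid.
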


\begin{proof}
It is clear that both $f_{1}$ and $f_{2}$ cannot belong to the $(1/3,2/3)$-limb, since then the mating would be obstructed by Theorem~\ref{t:Tanequivalences}. Hence it only remains to show that both of $f_{1}$ and $f_{2}$ cannot lie outside $\M_{1/2}$. So assume $f_{1}$ and $f_{2}$ lie outside $\M_{1/2}$. If $f_{i}$ is not in $\M_{1/2}$, the external rays of angles $1/3$ and $2/3$ must land at distinct points. Since these angles have period 2 under angle doubling, they must land at points with period dividing 2. As $R_{1/3}$ and $R_{2/3}$ land at different points, these landing points must be a period 2 cycle.

Now notice that, under mating, we have the identifications
\[
	\gamma_{f_1}\left( 1/3 \right) \sim \gamma_{f_2} \left( 2/3 \right)  \quad \text{ and } \quad \gamma_{f_2} \left( 1/3 \right) \sim \gamma_{f_1} \left( 2/3 \right)
\]
and these points are not identified with each other, or any other points. In particular, these points cannot be cluster points. Since $f_{i}(\gamma_{f_i}(1/3)) = \gamma_{f_i}(2/3)$ and $f_{i}(\gamma_{f_i}(2/3)) = \gamma_{f_i}(1/3)$, these pairs form a period 2 cycle for the map $F \cong f_{1} \Perp f_{2}$. However, since $F$ already has a period 2 cycle (the cluster point cycle) by assumption, we see that this second period 2 cycle cannot exist, since a degree 2 rational map can only have one period 2 cycle. Hence both of the maps cannot lie outside $\M_{1/2}$, and so precisely one of them lies in $\M_{1/2}$.  \end{proof}

We notice that, similarly to the last section, we can separate the classification into an investigation of the map in $\M_{1/2}$ and, afterwards, we can study the map that does not belong to $\M_{1/2}$. Indeed as we will see, the period two cluster case is far more complicated than the relatively simple fixed cluster case. A further comment is required on the restriction to degree 2. On the one hand, this restriction is motivated by the fact that Thurston equivalence is only possible with the given combinatorial data in the quadratic case. However, it turns out that even a generalisation of Theorem~\ref{t:preciselyone} does not hold in the higher degree case. In particular, though it remains true that one of the maps must belong to \emph{a} $1/2$-limb\footnote{The notion of the $1/2$-limb is no longer unique in degrees greater than $2$} in the degree $d$ multibrot set $\M_d$, it is not true that precisely one of them has this property. Examples showing this is no longer the case can be found in \cite{Thurstoneq} (Section 4) and also in the forthcoming paper \cite{2SupAtt}. It is hoped a more detailed study of the higher degree case will be the subject of future work.

\subsection{Properties of the map in $\M_{1/2}$}

For each $\rho \in \Q / \Z$, there are in fact two maps in the $1/2$ limb that have a period two orbit with combinatorial rotation number $\rho$. It turns out that either of these can be used to create a rational map with a period two cluster cycle using matings.

\begin{prop}\label{p:2clusttunedrabbit}
	Let $f_1$ and $f_2$ be quadratic polynomials with period $2q$ superattracting orbits. Suppose $F \cong f_{1} \Perp f_2$ is a rational map that has a period 2 cluster cycle. Then one of $f_{1}$ or $f_{2}$ is either the tuning of the basilica by an $q$-rabbit (a``double rabbit''), or the (unique) other period $2q$ component lying in the wake of this double rabbit.
\end{prop}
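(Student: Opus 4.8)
The plan is to carry over the argument of Proposition~\ref{p:nrabbit}, replacing the $\alpha$-fixed point by the unique period two orbit of a quadratic polynomial, and then to read off the map in $\M_{1/2}$ from the structure of the $1/2$-limb. First I would invoke Theorem~\ref{t:preciselyone} to fix notation: exactly one of the two polynomials, say $f_1$, lies in $\M_{1/2}$. Write $\{x_0,x_1\}$ for the period two cluster cycle, with combinatorial rotation number $\rho = p/q$ (with $q \geq 2$). Since $F$ restricts to $f_j$ on the embedded copy of $K(f_j)$ in $S^2_{f_1,f_2}$, any $\alpha$-fixed point of $f_1$ or $f_2$ stays fixed under $F$; as $x_0$ and $x_1$ are not $F$-fixed, neither cluster point is the ray class of an $\alpha$-fixed point. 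On the other hand, as in the discussion preceding Lemma~\ref{l:armslemma}, each $x_i$ (having nonzero combinatorial rotation number) is a periodic ray class containing a periodic branch point $b_i$ with nonzero combinatorial rotation number, which by Lemma~\ref{l:matingrayclass} is the only such point in $[x_i]$. Because $F(b_0)$ again lies in the copy of the filled Julia set that $b_0$ came from, and in the ray class $[x_1]$, it must equal $b_1$; iterating once more shows $\{b_0,b_1\}$ is a genuine periodic orbit of one of the two polynomials. Its period divides $2$, and it is not $1$ (a fixed branch point carrying at least two external rays would be an $\alpha$-fixed point, which we have excluded), so it is exactly $2$.

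Next I would show this orbit belongs to $f_1$. Counting as in Proposition~\ref{p:nrabbit} --- the cluster around $x_0$ contains exactly $q$ critical orbit Fatou components of the \emph{other} polynomial, and by Lemma~\ref{l:armslemma} each global arm at $b_0$ contains exactly one of their root points --- one finds $b_0$ is the landing point of exactly $q$ external rays, cyclically permuted with combinatorial rotation number $p/q$. For a quadratic polynomial there is only one period two orbit, so this is it; and a quadratic whose period two orbit is a repelling orbit that is the landing point of $q \geq 2$ rays permuted with rotation number $p/q$ is precisely a tuning of the basilica by a quadratic in the $p/q$-limb of $\M$ (the period two analogue of the fact that a repelling fixed point with $q$ rays of rotation number $p/q$ forces the parameter into the $p/q$-limb of the main cardioid). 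In particular the polynomial carrying $\{b_0,b_1\}$ lies in $\M_{1/2}$, so by Theorem~\ref{t:preciselyone} it is $f_1$; hence $f_1$ is the tuning of the basilica by some quadratic $g$ lying in the $p/q$-limb of $\M$.

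It remains to identify $g$. Tuning doubles periods, so $f_1$ having a period $2q$ superattracting orbit forces $g$ to be the centre of a period $q$ hyperbolic component, and $g$ lies in the $p/q$-limb. The classification of such $g$ now follows from the combinatorics of the wakes of the Mandelbrot set, in the spirit of \cite{Bullett-Sentenac} and \cite{Blokhetal}: inside the $p/q$-limb of $\M$ the only centres of period $q$ components are the $q$-rabbit and at most one further (necessarily primitive) period $q$ component, and when it exists it lies in the wake of the $q$-rabbit. Since tuning by the basilica is a homeomorphism onto the little Mandelbrot copy at the period two component that multiplies periods by $2$ and carries sub-wakes to sub-wakes, $g$ being the $q$-rabbit makes $f_1$ the double rabbit, while $g$ being the other component makes $f_1$ the unique other period $2q$ component in the wake of the double rabbit. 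This is the assertion of the proposition.

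The step I expect to be the real obstacle is the last one: establishing that inside the $p/q$-limb of $\M$ there are at most two centres of period $q$ components, and that the extra one (when present) sits in the $q$-rabbit's wake. Everything before that is essentially a transcription of the fixed-cluster argument; this finite-combinatorial input about the Mandelbrot set is the new ingredient, and it is precisely what will govern the multiplicities in Theorem~\ref{t:shares}. A smaller point also needing attention is the characterisation used in the second paragraph of the parameters with a biaccessible repelling period two orbit of rotation number $p/q$ --- the period two counterpart of the standard statement for repelling fixed points --- which is most efficiently obtained from tuning theory.
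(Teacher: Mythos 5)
The first two paragraphs of your argument, up to the point where you establish that the period two orbit of the map in $\M_{1/2}$ lands $q$ external rays with rotation number $p/q$, track the paper's own proof closely (the paper reaches the same conclusion via Lemma~\ref{l:matingrayclass} and Lemma~\ref{l:armslemma}, and then simply asserts the final combinatorial fact). The problem is your third paragraph. You claim that a quadratic whose period two orbit is repelling and lands $q$ rays permuted with rotation number $p/q$ must be a tuning of the basilica by a quadratic in the $p/q$-limb. That is false. The correct consequence is that the parameter lies in the $p/q$-sublimb of the period $2$ component (equivalently, in the wake of the double rabbit's component), and this sublimb is strictly larger than the image of the $p/q$-limb under basilica-tuning: it also contains the decorations hanging off the little Mandelbrot copy. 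The secondary map $g_{p/q}$ lives in the sublimb but \emph{not} in the little copy. Concretely, for $p/q=1/4$ the paper records $g_{1/4}$ at angles $87/255=.\overline{01010111}$ and $88/255=.\overline{01011000}$; the consecutive binary pairs contain $11$ and $00$, so neither angle is in the image of basilica tuning (whose image consists of angles built from the blocks $01$ and $10$). So $g_{1/4}$ is not basilica$\,*\,$(anything).

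The same example also shows the second half of your last step is unreliable: in the $1/4$-limb of $\M$ there is no period-$4$ component besides the $1/4$-rabbit itself (there is no angle $a/15$ strictly between $1/15$ and $2/15$), so your argument would produce only the double rabbit and miss the secondary map entirely, contradicting the proposition and Corollary~\ref{c:bothmatings}. What you actually need, and what the paper uses, is the statement that the $p/q$-sublimb of the period $2$ component contains exactly two period $2q$ superattracting parameters, the satellite (double rabbit) and one primitive component in its wake; this is a fact about wakes in $\M$ (in the spirit of internal-address combinatorics) and is not obtained by pushing the fixed-point story through the basilica tuning map.
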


\begin{proof}
  This proof is equivalent to showing that the map in $\M_{1/2}$ must belong to a $p/q$-sublimb of the period 2 component of $\M$. Notice that maps belonging to this sublimb are precisely those which have a period 2 point with combinatorial rotation number $p/q$. If $f \in \M_{1/2}$ then it has a period two cycle $\{ p_1 , p_2 \}$. Under mating, the equivalence classes of these points must also have period two, since otherwise we would have $[p_1] = [p_2]$, contradiciting Lemma~\ref{l:matingrayclass}. Since a quadratic rational map has at most one period two cycle, the classes $[p_1]$ and $[p_2]$ must become the cluster cycle. It now follows from Lemma~\ref{l:armslemma} that the combinatorial rotation number of this period two orbit will be $p/q$ for some $p$, and the only polynomials which have a period $2q$ superattracting cycle and a period two point with combinatorial rotation number $p/q$ are those two given in the statment of the proposition.
\end{proof}

An example of the position of the two maps in paramter space is given in Figure~\ref{f:twomaps}. It again follows easily that the combinatorial rotation number of the period two cycle is the same as the combinatorial rotation number of the period two cluster cycle in the resulting rational map. Figure~\ref{f:twomaps} shows the position of these two maps in the period 8, rotation number 1/4 case.
\begin{figure}[ht]
    \begin{center}
    \includegraphics[width=0.95\textwidth]{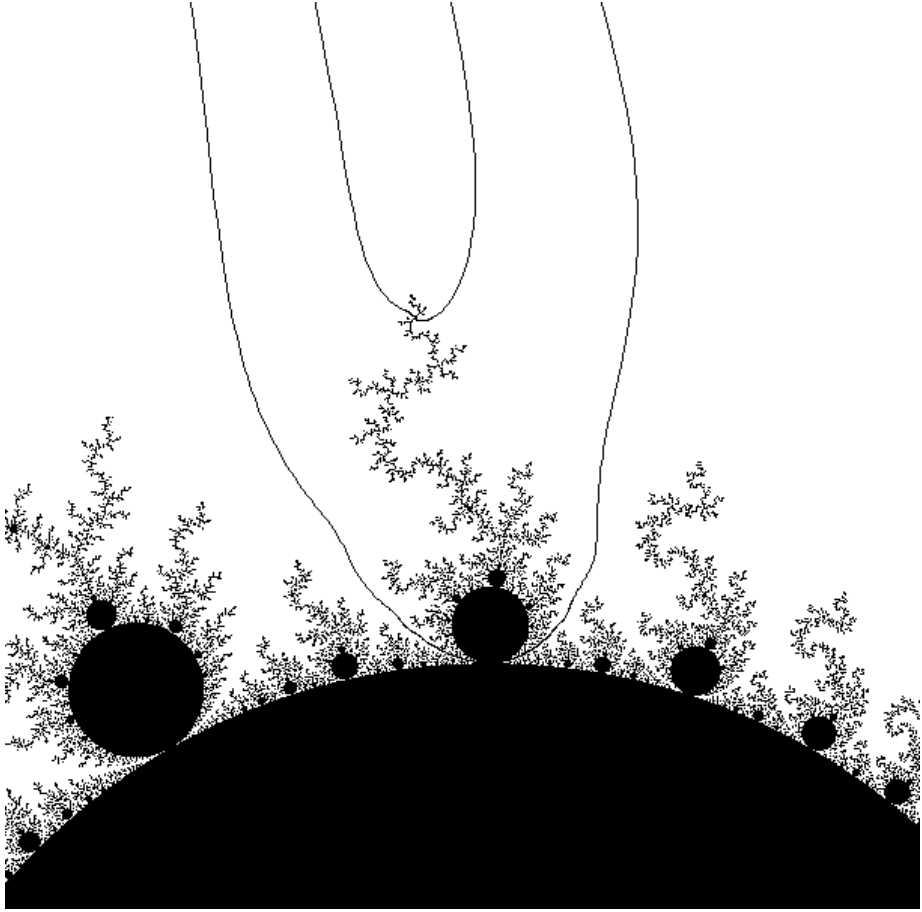}
    \end{center}
    \caption{The double rabbit component and secondary map component of period 8, rotation number 1/4 case.}
\label{f:twomaps}
\end{figure}
We remark for the moment that this classification is the best possible: it is not true in the period two cluster case that one of the maps must be the double rabbit, for example, since there are matings with the secondary maps which create period two cluster cycles. The matings with the double rabbit are the canonical examples, in that they behave similarly to the examples as found in Section~\ref{fixedclust}. We will consider the matings with the secondary map in Section~\ref{secondmap}. We denote by $f_{p/q}$ (respectively $g_{p/q}$) the double rabbit (respectively secondary map) with a period two orbit with combinatorial rotation number $p/q$.

\subsection{Properties of the complementary map}

We now attempt to prove some analogous results to those in Section~\ref{ss:1non-rab}. We start with a well-known lemma whose proof, which makes use of results from \cite{Milnor:mandel}, is omitted.

\begin{lem}\label{l:Wittner2}
Suppose that $z$ is a biaccessible periodic point in $J(f_{p/q})$. Then $z$ is either the $\alpha$-fixed point or belongs to the period 2 orbit $\{p_1,p_2\}$.
\end{lem}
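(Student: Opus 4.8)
The plan is to...The plan is to prove Lemma~\ref{l:Wittner2} by combining two standard facts about quadratic polynomials: first, that a biaccessible periodic point of a polynomial with locally connected Julia set must be the landing point of at least two external rays which are permuted (necessarily cyclically) by the first return map; and second, that such a configuration of rays descends, via the correspondence between dynamical and parameter rays, to strong restrictions on where the parameter $c$ lives. Since $f_{p/q}$ is the tuning of the basilica by a $q$-rabbit, I would describe its Julia set via the tuning / renormalization picture: there is a canonical copy of the basilica Julia set containing $J(f_{p/q})$, and the small copies of the $q$-rabbit Julia set decorating it. The biaccessible periodic points should therefore be only those coming from (i) the biaccessible periodic points of the basilica itself, which are the $\alpha$-fixed point and the period two orbit, and (ii) the biaccessible periodic points inside a small rabbit copy — but the rabbit, being a primitive(-ish) satellite map whose only periodic biaccessible point on its Julia set is its own $\alpha$-fixed point (this is the fact cited as Claim 10.1.1 of \cite{Wittner} and used in Lemma~\ref{hangles}), contributes nothing new beyond points that are already the $\alpha$-fixed point or in $\{p_1,p_2\}$ after applying the tuning homeomorphism.

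More concretely, the key steps in order would be: (1) recall that $f_{p/q} \in \M_{1/2}$ has a period two cycle $\{p_1,p_2\}$ and an $\alpha$-fixed point, and that these are biaccessible; (2) suppose $z$ is biaccessible of period $m$, so $z$ is the landing point of $r \ge 2$ external rays $R(\theta_1),\dots,R(\theta_r)$, cyclically permuted by $f_{p/q}^{\circ m}$; (3) consider the closure of the union of these rays together with $z$ — this is a "ray portrait" in the sense of Milnor \cite{Milnor:mandel} — and invoke the fact that the associated characteristic arc determines, and is determined by, a parameter ray portrait at a point on the boundary of a hyperbolic component; (4) use the location of $f_{p/q}$ (it lies in a $p/q$-sublimb of the period two component, and in no other limb of any lower-period component) to conclude that the only possible periods and portraits are those of the fixed point or of the period two orbit; (5) conclude $z \in \{\alpha\} \cup \{p_1,p_2\}$.

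The main obstacle, I expect, is making step (3)–(4) rigorous without reproving a chunk of the Milnor/tuning theory: one needs to rule out the appearance of "extra" biaccessible periodic points hiding in the decorations — i.e., the landing points of rays whose angles are not the obvious fixed-point or period-two angles $\{1/3,2/3\}$ (and their basilica-tunings). The cleanest way around this is probably to argue via tuning: the tuning homeomorphism conjugates $f_{p/q}$ on its small Julia set copies to the $q$-rabbit, periodic biaccessible points of $f_{p/q}$ are either "outside" all small copies (and hence biaccessible points of the basilica, giving $\alpha$ or $\{p_1,p_2\}$) or "inside" a small copy, in which case they correspond under tuning to periodic biaccessible points of the $q$-rabbit, and the $q$-rabbit has only one such, namely its $\alpha$-fixed point, whose image under the tuning map is (a point in) the basilica's period two orbit root, again giving nothing outside $\{\alpha\}\cup\{p_1,p_2\}$. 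Since the excerpt explicitly says the proof "makes use of results from \cite{Milnor:mandel}" and is "omitted," I would keep the write-up at the level of citing the ray-portrait classification and the tuning correspondence rather than grinding through the combinatorics of angles.
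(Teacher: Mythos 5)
The paper does not supply a proof of Lemma~\ref{l:Wittner2}, saying only that it ``makes use of results from \cite{Milnor:mandel}''; your steps (2)--(4), which reduce the claim to Milnor's orbit-portrait/wake correspondence and the internal address $1 \to 2 \to 2q$ of $f_{p/q}$, are exactly the argument that citation points to, and they are correct: the only parabolic points whose wakes contain the double rabbit are the root of the period $2$ component (giving the $\alpha$ portrait with two rays) and the $p/q$-bifurcation point on its boundary, which, because the period $2q$ component is a satellite, contributes the period-two orbit $\{p_1,p_2\}$ with $q$ rays each and nothing new. So the core of your proposal matches the paper's intent. Your alternative route via tuning, treating $f_{p/q}$ as basilica $*$ $q$-rabbit, is a reasonable supplement but is not quite watertight as sketched: the dichotomy ``biaccessible in the basilica skeleton'' versus ``biaccessible inside a small rabbit copy'' does not by itself exclude a periodic point $z$ lying in a small copy whose biaccessibility comes from a mixture of rays --- some inherited from the small copy and some from the ambient basilica combinatorics --- and you would need to invoke the rigidity of periodic ray portraits under renormalization (or reduce again to Milnor's wake argument) to rule this out. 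Also, a small correction: the basilica's only \emph{periodic} biaccessible point is $\alpha$; its period-two orbit lies in the closures of the critical Fatou components and is not biaccessible in $J(z^2-1)$, so the points $\{p_1,p_2\}$ of $f_{p/q}$ arise only through the small copies (as the $\alpha$-points of the embedded rabbits), not as already-biaccessible points of the basilica as your phrasing briefly suggests. None of this affects the conclusion, and the orbit-portrait version of your argument stands on its own.
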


As with the previous section, our description of the complementary map (the map which does not belong to the $1/2$-limb of the Mandelbrot set) will take place as a discussion of the associated angles. In fact, there are perhaps more descriptive ways of describing these maps (see \cite{Mythesis} for a discussion of the combinatorial classification using internal addresses), but the description with associated angles is the more complete for the moment. We first require an analogue to Definition~\ref{d:1-ARN}.

\begin{defn}\label{d:2-ARN}
  Let $\theta \in S^1$ be periodic of period $2q$ under the map $\sigma_2 : t \mapsto 2t$ (so $\theta = a/(2^{2q}-1)$ for some $a$). Consider the disjoint sets
\begin{align*}
    &A_{0} = A_{0}(\theta) = \left\{ \theta, 2^{2}\theta, 2^{4}\theta, \ldots, 2^{2k}\theta, \ldots , 2^{2(q-1)}\theta \right\} \quad\text{and}\\
    &A_{1} = A_1(\theta) = \left\{ 2 \theta, 2^{3} \theta, \ldots, 2^{2k+1}\theta, \ldots, 2^{2q-1}\theta \right\}.
\end{align*}
We say $\theta$ has (admissible) ($2$-)angular rotation number $p/q$ if the following conditions are satisfied.
\begin{itemize}
	\item{$A_0$ and $A_1$ lie in disjoint arcs of $S^1$}
	\item{The sets $A_0$ and $A_1$ have (angular) rotation number (in the sense of Definition~\ref{d:1-ARN}) $p/q$ under the map $\theta \mapsto 4 \theta$.}
\end{itemize}
\end{defn}

Clearly this is not a full generalisation of Definition~\ref{d:1-ARN}, since the first condition above introduces a restriction which was not used previously. It actually turns out that this restriction will be important for us. There are actually orbits of angles which satisfy the second condition but not the first (see \cite{Blokhetal}), but we do not want to consider such orbits in this paper and so the restriction allows us to ignore them. For the moment we will be assuming that the map in $\M_{1/2}$ is the double rabbit. As will be shown later, the set of complementary maps to the secondary map are a subset of the complementary maps to the double rabbit. %We now discuss the two conditions required to satisfy Definition~\ref{d:2-ARN}. Our motivation is to classify the complementary maps $h$ in an almost identical way to that which was done in the previous section. Hence the definition of 2-angular rotation number will be used, in the main, to describe the angles associated to the partners in the matings to the maps which lie in $\M_{1/2}$. They follow from the following obseravation. Notice that for any map $f_{p/q}$, the points of the period two orbit are separated in the Hubbard tree of $f_{p/q}$ by $\alpha_f$, the $\alpha$-fixed point of $f_{p/q}$. Hence one of the points, $p_1$, in the period two orbit is such that all angles landing on it belong to the arc $(1/3,2/3)$ and the other point $p_0$ is the landing point of external rays whose angles all lie in $(2/3,1/3)$. So if $\theta$ is the angle of an external ray landing on either $p_0$ or $p_1$, then $\theta$ will have 2-angular rotation number $p/q$ and as we will see in Proposition~\ref{p:hangles}, this will mean the relevant angle associated with a complementary map $h$ will be required to have its orbits separated into two disjoint arcs in the way required by the first condition, and will be required to have the correct rotation number under the second iterate of the map $t \mapsto 2t$, as required by the second condition.

\begin{prop}\label{p:hangles}
  Suppose $F \cong f_{p/q} \Perp h$ is a rational map with a period two cluster cycle. Then one of the angles associated to $h$ has $2$-angular rotation number $(q-p)/q$.
\end{prop}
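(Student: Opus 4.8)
The plan is to mimic the structure of Lemma~\ref{hangles} from the fixed case, but with the extra bookkeeping forced by the period two structure and by the restriction built into Definition~\ref{d:2-ARN}. By Proposition~\ref{p:2clusttunedrabbit} we know that (after relabelling) $f_1 = f_{p/q}$, the double rabbit, and that its period two cycle $\{p_1,p_2\}$ becomes the period two cluster cycle of $F$, with combinatorial rotation number $p/q$. The first step is to identify, in the dynamical plane of $f_{p/q}$, the external rays landing at $p_1$ and $p_2$: by Lemma~\ref{l:Wittner2} the only biaccessible periodic points of $f_{p/q}$ are the $\alpha$-fixed point and this period two orbit, so (exactly as in the fixed case, citing \cite{Wittner}, Claim~10.1.1) the root points of the critical orbit Fatou components that must lie in the ray classes $[p_1]$ and $[p_2]$ are precisely the landing points of the rays of angle $-\theta$, where $\theta$ ranges over the angles of rays landing at $\{p_1,p_2\}$. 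Since $\gamma_h(-\theta) \sim \gamma_{f_{p/q}}(\theta)$ under the mating identification, the root point of the critical value component of $h$ lying in the cluster is the landing point of the $h$-ray of some angle $-\theta$, so $-\theta$ is associated to $h$.

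The second, more delicate step is to show that this angle $-\theta$ has $2$-angular rotation number $(q-p)/q$ in the sense of Definition~\ref{d:2-ARN}, i.e.\ that it satisfies \emph{both} conditions there. The rotation number condition (the second bullet of Definition~\ref{d:2-ARN}) follows from the fact that the first return map $f_{p/q}^{\circ 2}$ permutes the rays at $p_1$ with combinatorial rotation number $p/q$ — this is where Lemma~\ref{l:armslemma} and the analysis of Proposition~\ref{p:2clusttunedrabbit} already gave us $\rho(F) = p/q$ — and then negating all angles reverses the rotation number to $(q-p)/q$, just as in the last sentence of the proof of Lemma~\ref{hangles}. The key point is that the angles at $p_1$ form the set $A_0$ (rays of even period-index in the $\sigma_2$-orbit) and the angles at $p_2$ form $A_1$ (odd period-index), since $f_{p/q}$ swaps $p_1$ and $p_2$; so negating gives the two sets $A_0(-\theta)$, $A_1(-\theta)$ of Definition~\ref{d:2-ARN}. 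The first bullet — that $A_0$ and $A_1$ lie in disjoint arcs — is the part I expect to be the main obstacle. It should follow from the location of $f_{p/q}$ in the $1/2$-limb: because $f_{p/q}$ is a tuning of the basilica, the two points $p_1$ and $p_2$ are separated by the rays of angle $1/3$ and $2/3$ landing at $\alpha$, so the rays at $p_1$ all lie on one side of this separating arc and those at $p_2$ on the other; negating angles preserves this separation and gives the two disjoint arcs. Making this rigorous will require carefully invoking the tuning structure (or the relevant results of \cite{Milnor:mandel} on the combinatorics of the $1/2$-limb) to control which arc of $S^1 \setminus \{1/3,2/3\}$ the rays at $p_1$ versus $p_2$ occupy.

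Finally I would close the argument by checking consistency: precisely one angle in the orbit $\{-\theta, -2\theta, \ldots\}$ is the angle landing at a root of the critical value component of $h$ (the others land at roots of the other critical orbit components), and by the preceding paragraphs that orbit realises the $2$-angular rotation number $(q-p)/q$, so $h$ has an associated angle with the claimed property. The one genuine gap relative to the fixed case is that here we must also handle the possibility that the map in $\M_{1/2}$ is the secondary map $g_{p/q}$ rather than the double rabbit $f_{p/q}$; as the paper remarks just before the statement, the proof for the moment assumes the double rabbit, and the fact that the complementary maps to $g_{p/q}$ form a subset of those to $f_{p/q}$ is to be established separately in Section~\ref{secondmap}, so it is legitimate to defer that case.
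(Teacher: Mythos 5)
Your proposal follows essentially the same route as the paper's proof: identify the rays landing at the period two orbit $\{p_1,p_2\}$ of $f_{p/q}$, use Lemma~\ref{l:Wittner2} to control biaccessible points, conclude that the rays $R_h(-\theta_i)$ must land at root points of critical orbit Fatou components of $h$, and note that negation of angles flips the $2$-angular rotation number from $p/q$ to $(q-p)/q$. Two remarks on detail. First, where you assert that the root points ``are precisely the landing points of the rays of angle $-\theta$,'' the paper is careful to close a loophole you gloss over: if $R_h(-\theta_i)$ did not land at such a root, the tree $[p_i]$ would have to pass through another periodic biaccessible point of $J(f_{p/q})$, which by Lemma~\ref{l:Wittner2} could only be $\alpha_f$; the paper then invokes Lemma~\ref{l:matingrayclass} to show $[\alpha_f] \neq [p_1], [p_2]$ and dismiss this, and then invokes Lemma~\ref{p:Marysresult} to guarantee that one of the landing points is the root of the critical \emph{value} component (so the angle really is an associated angle in the paper's sense). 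You should make those two invocations explicit. Second, you devote most of your effort to the disjoint-arcs condition in Definition~\ref{d:2-ARN}, flagging it as the likely obstacle; the paper treats this as immediate from $f_{p/q}$ lying in $\M_{1/2}$ (it just writes ``let $p_0$ be the periodic point whose external rays all lie in $(2/3,1/3)$''), and your sketch via the separating rays at $\alpha_f$ is a correct justification of that fact, just more than the paper bothers to say. Finally, since the proposition is stated only for $f_{p/q}$, there is no secondary-map case to defer within this proof.
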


\begin{proof}
  This is similar to Lemma~\ref{hangles}. The the ray classes of the period two orbit $\{ p_0, p_1 \}$ of $f_{p/q}$ will become the cluster cycle and we note that the angle of any external ray landing on one of the the $p_i$ has 2-angular rotation number $p/q$. Let $p_0$ be the periodic point whose external rays all lie in $(2/3,1/3)$ and label these angles in anticlockwise order (starting anywhere) by $\theta_1,\ldots,\theta_n$. We claim that $R^h_{-\theta_i}$ will land at the root point of a critical orbit Fatou component of $h$. If not, the ray class would have to contain another periodic biaccessible point of $J(f_{p/q})$ and by Lemma~\ref{l:Wittner2} the only other biaccessible point is $\alpha_f$ and by Lemma~\ref{l:matingrayclass} $[\alpha_f] \neq [p_1] \neq [p_2] \neq [\alpha_f]$. By Lemma~\ref{p:Marysresult}, one of the rays $R^h_{-\theta_i}$ will land at the root point of the critical value Fatou component of $h$, and so $-\theta_i$ will be one of the angles associated with $h$. Finally, as the angles in the orbit of $\theta_i$ have 2-angular rotation number $p/q$, the angles in the orbit  of $-\theta_i$ will have 2-angular rotation number $(q-p)/q$. 
\end{proof}

\begin{prop}\label{p:alldata2}
  All combinatorial data can be obtained.
\end{prop}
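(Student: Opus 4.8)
The plan is to establish the realisation direction --- the necessary conditions having already been extracted in Propositions~\ref{p:2clusttunedrabbit} and~\ref{p:hangles} --- by a construction parallel to that of Proposition~\ref{p:allobtained}, with the double rabbit playing the role of the $p/q$-rabbit. Fix combinatorial data $(\rho,\delta)=(p/q,\delta)$ with $\delta$ odd, and set $f=f_{p/q}$, the double rabbit, with period two orbit $\{p_0,p_1\}$ of combinatorial rotation number $p/q$. As in the proof of Proposition~\ref{p:2clusttunedrabbit}, in any unobstructed mating $f\Perp h$ with $h\notin\M_{1/2}$ the classes $[p_0]$ and $[p_1]$ are forced to have period two --- otherwise $[p_0]=[p_1]$, contradicting Lemma~\ref{l:matingrayclass} --- so, a quadratic rational map having a unique period two cycle, they must constitute the cluster cycle; and by Lemma~\ref{l:armslemma} this cluster cycle inherits the combinatorial rotation number $p/q$. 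Hence $\rho(F)=p/q$ is automatic and the entire problem is to choose $h$ so that the critical displacement is exactly $\delta$.

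To select $h$, I would argue with associated angles as in Proposition~\ref{p:hangles}. Let $p_0$ be the member of the orbit whose external rays lie in $(2/3,1/3)$, and list their angles anticlockwise as $\theta_1,\dots,\theta_q$, normalised so that $\theta_1$ is the first angle met on turning anticlockwise from the critical value component of $f_{p/q}$. Since the first return map to $p_0$ rotates its $q$ rays by $p/q$, each $\theta_i$ is periodic of period exactly $2q$ under $\sigma_2$ with $2$-angular rotation number $p/q$, so each $-\theta_i$ is periodic of period $2q$ with $2$-angular rotation number $(q-p)/q$. By the standard correspondence between dynamical and parameter rays there is then a unique hyperbolic quadratic polynomial $h=h(k)$ with a period $2q$ superattracting orbit whose critical value component has root point $\gamma_h(-\theta_k)$, and I would take $h=h(k)$ for the index $k=k(\delta)$ dictated by the alternation of the two critical cycles around the cluster star. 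As in Proposition~\ref{p:hangles}, Lemmas~\ref{l:Wittner2}, \ref{l:matingrayclass} and~\ref{p:Marysresult} ensure that the rays $R^h_{-\theta_i}$ genuinely land at root points of critical orbit Fatou components of $h$ --- one of them the critical value component --- so that these $q$ arms interleave the $q$ arms $\theta_i$ of $f_{p/q}$ at $[p_0]$ to form a $2q$-armed star.

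The remaining task is to check that the mating is legitimate and carries the prescribed data. By Theorem~\ref{t:Tanequivalences}, $f_{p/q}\Perp h$ is unobstructed exactly when $f_{p/q}$ and $h$ lie in non-conjugate limbs; as the $1/2$-limb is its own conjugate limb and $f_{p/q}\in\M_{1/2}$, this reduces to showing $h\notin\M_{1/2}$, which I would deduce from the disjoint-arcs clause of Definition~\ref{d:2-ARN}: the half-orbits $A_0(-\theta_k)$ and $A_1(-\theta_k)$ lying in disjoint arcs prevents the rays $R^h_{1/3}$ and $R^h_{2/3}$ from landing at a common period two point, which is exactly what would be needed for $h$ to lie in $\M_{1/2}$. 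With the mating unobstructed, Thurston's theorem produces a rational map $F$, and Rees's theorem gives $F\cong f_{p/q}\Perp h$ as a geometric mating. Finally, the ray relation identifies $\gamma_h(-\theta_k)$ with $\gamma_{f_{p/q}}(\theta_k)=p_0$, so the root point of the critical value component of $h$ lies on the star of the cluster $\mathcal{C}_1=[p_0]$, at the arm fixed by the cyclic position of $-\theta_k$ among the $\theta_i$; a direct check of that position against the arm $\ell_0$ carrying the critical point of $f_{p/q}$ shows the critical displacement equals $\delta$. Letting $\delta$ run over the admissible odd residues, and $p/q$ over all $p/q$ (for which $f_{p/q}$ exists), then yields all combinatorial data.

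The hard part will be the verification that $h\notin\M_{1/2}$: one must pin down exactly which orbits of angles with $2$-angular rotation number $(q-p)/q$ correspond to maps inside $\M_{1/2}$ and which lie outside it, and this is where the argument has to engage seriously with the combinatorics of \cite{Blokhetal} and \cite{Bullett-Sentenac}; it is also the step that isolates the period two analogue of the adjacency obstruction of Proposition~\ref{p:adamproof} --- the values of $\delta$ (if any) for which the prescribed $h$ would slip into $\M_{1/2}$. A second, more bookkeeping-heavy difficulty is fixing the precise index $k(\delta)$: in contrast to the fixed case there is no $\delta\leftrightarrow-\delta$ symmetry here, so the orientation conventions built into the definitions of the cluster star and of the critical displacement must be carried through the whole computation without error.
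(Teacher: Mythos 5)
Your overall approach is the same as the paper's: fix $(p/q,\delta)$, take $f_{p/q}$ as one factor, list the angles $\theta_i$ of the external rays landing at $p_0\in J(f_{p/q})$ and choose $h$ to be the polynomial whose associated parameter angle is $-\theta_k$ for the appropriate index. However, you have miscalibrated where the difficulty lies. You flag the verification that $h\notin\M_{1/2}$ as ``the hard part,'' to be settled by engaging with \cite{Blokhetal} and \cite{Bullett-Sentenac}, and you worry about a period-two analogue of the adjacency obstruction of Proposition~\ref{p:adamproof}, i.e.\ values of $\delta$ for which the prescribed $h$ would land in $\M_{1/2}$. Neither concern is justified, and the reason is a one-line symmetry observation that the paper uses and your write-up misses: the arc $(2/3,1/3)$ is invariant under $t\mapsto -t$. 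Since each $\theta_k$ lies in $(2/3,1/3)$ (these are the angles landing at the period-two point $p_0$ of a map in $\M_{1/2}$), each $-\theta_k$ also lies in $(2/3,1/3)$, and a quadratic polynomial with an associated angle in $(2/3,1/3)$ is automatically outside $\M_{1/2}$. Thus \emph{every} choice of $k$ produces an unobstructed mating (by Theorem~\ref{t:Tanequivalences}), there is no excluded value of $\delta$, and no recourse to the combinatorics of angular rotation numbers is needed for this step. This is exactly the structural difference from the fixed case, where $-\theta_1$ and $-\theta_q$ fall into the conjugate limb and $\delta=\pm1$ is ruled out; here the arc is its own conjugate, so all odd $\delta$ are realised.

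Two minor bookkeeping points. First, you anchor the labelling of the $\theta_i$ at the critical \emph{value} component of $f_{p/q}$, whereas the paper anchors at the critical \emph{point} component; this just shifts the index $k$ and should be chosen consistently with the definition of $\delta$. Second, the chain of arguments you rehearse from Proposition~\ref{p:2clusttunedrabbit} (the $[p_i]$ having period two, becoming the unique period-two cycle of $F$, inheriting the rotation number $p/q$) is correct but is already established there and need not be reproved; the content of Proposition~\ref{p:alldata2} is only the choice of $h$ realising the prescribed $\delta$.
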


\begin{proof}
  We will show that any combinatorial data $(\rho,\delta) = (p/q,2k+1)$ can be obtained by a mating of the form $f_{p/q} \Perp h$. Clearly, if $h$ is chosen appropriately, then the rational map formed by this mating will have a period two cluster cycle with combinatorial rotation number $p/q$, since matings preserve the combinatorial rotation number. Label the angles of the external rays landing at $p_0 \in J(f_{p/q}))$ by $\theta_0,\ldots,\theta_{q-1}$, where $\theta_0$ is the angle of the external ray which approaches $p_0$ immediately anticlockwise from the critical point component of $f_{p/q}$. Then to get $\delta = 2k+1$, let $h$ be the map associated with the parameter ray of angle $-\theta_k$. Since $\theta_k \in (2/3,1/3)$, then $-\theta_k \in (2/3,1/3)$ also, so the mating is not obstructed by Tan Lei's theorem. Also, since $-\theta_k$ is associated to $h$, the external ray of angle $-\theta_k$ lands at the base point of the critical value component of $h$. Thus the rational map $F \cong f_{p/q} \Perp h$ exists and has combinatorial data $(\rho,\delta)$.
\end{proof}

A converse to the Proposition~\ref{p:hangles} also exists, if one places a suitable restriction on the angles that can be associated with the complementary map $h$.

\begin{prop}\label{p:hanglesconverse}
  Suppose $h$ has an associated angle $\theta$ which has $2$-angular rotation number $(q-p)/q$ and that this angle lies in $(2/3,1/3)$. Then $F \cong f_{p/q} \Perp h$ (and $F' \cong h \Perp f_{p/q}$) have a period two cluster cycle. 
\end{prop}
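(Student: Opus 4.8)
The plan is to reverse the construction used in Proposition~\ref{p:alldata2} and then invoke the uniqueness theorem from \cite{Thurstoneq}. First I would check that the mating $F \cong f_{p/q} \Perp h$ is unobstructed: since $\theta$ lies in $(2/3,1/3)$, so does $-\theta$, and therefore the associated map $h$ does not lie in the $(1/3,2/3)$-limb of $\M$. Since $f_{p/q} \in \M_{1/2}$, the two maps lie outside conjugate limbs, so by Theorem~\ref{t:Tanequivalences} (condition 4) there is no Levy cycle and hence no Thurston obstruction; the geometric mating $F$ exists and is a rational map, and similarly for $F'$.

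Next I would show that $F$ actually has a period two cluster cycle. The period two orbit $\{p_0,p_1\}$ of $f_{p/q}$ lies in $J(f_{p/q})$, and by Lemma~\ref{l:matingrayclass} (applied as in the proof of Proposition~\ref{p:2clusttunedrabbit}) the ray classes $[p_0],[p_1]$ remain a genuine period two cycle in $F$ and are distinct. I must check that these ray classes are in fact cluster points, i.e.\ that each contains a critical orbit Fatou component from \emph{each} of the two critical cycles. The $f_{p/q}$-side contributes the critical orbit Fatou components of $f_{p/q}$ landing at $p_0$ and $p_1$ (recall $f_{p/q}$ is a double rabbit, so its critical value component has its root on the period two orbit). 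For the $h$-side I would use that $-\theta$ is associated to $h$: the external ray $R^h_{-\theta}$ lands at a root point of the critical value Fatou component of $h$, and under the identification $\gamma_{f_{p/q}}(\theta)\sim\gamma_h(-\theta)$ this point is glued into $[p_0]$ (after labelling $p_0$ to be the point whose rays lie in $(2/3,1/3)$, as in Proposition~\ref{p:hangles}). Because $-\theta$ has $2$-angular rotation number $(q-p)/q$, its whole orbit feeds critical-value-component root points into the ray classes along the period two cycle, so each of $[p_0],[p_1]$ meets a critical orbit Fatou component of $h$; combined with the contributions from $f_{p/q}$, these ray classes are cluster points, and their period is two.

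The remaining point is to identify which combinatorial data $(\rho,\delta)$ arises, and here I would simply quote Proposition~\ref{p:alldata2} together with Theorem~\ref{Fixedcase}'s period two analogue. The combinatorial rotation number is $p/q$ because it equals the $2$-angular rotation number $p/q$ of the rays of $f_{p/q}$ landing at $\{p_0,p_1\}$, which is preserved by mating. Since $-\theta$ is, up to the cyclic relabelling, one of the angles $-\theta_k$ of Proposition~\ref{p:alldata2}, the critical displacement is the corresponding odd integer $2k+1$; thus $F$ realises exactly the combinatorial data obtained by that construction. (If one wants the statement purely qualitatively — that $F$ \emph{has} a period two cluster cycle — this last paragraph is not even needed, and the argument concludes at the end of the previous paragraph.) The main obstacle I expect is the bookkeeping in the previous paragraph: one must argue carefully that no \emph{other} periodic biaccessible point of $J(f_{p/q})$ is dragged into $[p_0]$ or $[p_1]$ (which would spoil the count or the period), and this is precisely where Lemma~\ref{l:Wittner2} and Lemma~\ref{l:matingrayclass} are used to rule out $\alpha_f$ and to keep $[p_0],[p_1],[\alpha_f]$ mutually distinct — exactly the hypothesis $\theta\in(2/3,1/3)$ is what prevents the $h$-side rays from accidentally attaching to $[\alpha_f]$ instead.
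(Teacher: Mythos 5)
Your proof takes a genuinely different route from the paper's. The paper's proof of Proposition~\ref{p:hanglesconverse} is a short \emph{counting} argument: it identifies the angles $\theta$ satisfying the hypotheses with the (unique) orbit of $q$ angles that corresponds, via the map $t\mapsto 4t$, to the $p/q$-rabbit in $\M_4$ whose associated angles lie in $(1/3,2/3)$, so that there are exactly $q$ candidate angles; Proposition~\ref{p:alldata2} then constructs $q$ matings realising all $q$ critical displacements, each using one of these $q$ angles, so every candidate must already have been used. Your plan instead tries to verify directly that the ray classes $[p_0],[p_1]$ become cluster points in the mating. Both routes can work, but yours requires a step that you have asserted rather than proved, and this is precisely the step the paper's counting argument is designed to sidestep.

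The gap is in the sentence ``Because $-\theta$ has $2$-angular rotation number $(q-p)/q$, its whole orbit feeds critical-value-component root points into the ray classes along the period two cycle.'' For this you need to know that the landing point $\gamma_{f_{p/q}}(\mp\theta)$ in the dynamical plane of $f_{p/q}$ actually \emph{is} $p_0$ (or $p_1$), i.e.\ that $\mp\theta$ belongs to the orbit of external angles landing on the period two cycle of $f_{p/q}$. That requires a uniqueness statement: among period-$2q$ angles lying in $(2/3,1/3)$ there is a single orbit with the given $2$-angular rotation number, and it is the one landing on $\{p_0,p_1\}$. Without this, $\mp\theta$ could a priori be a different angle of period $2q$ with the right rotation number landing elsewhere, and then the root point of $h$'s critical-value component would not be glued into $[p_0]$ or $[p_1]$ at all. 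The paper gets this uniqueness from the correspondence with the $p/q$-rabbit in $\M_4$ (ultimately \cite{Blokhetal}); you invoke it implicitly (``$-\theta$ is, up to cyclic relabelling, one of the angles $-\theta_k$ of Proposition~\ref{p:alldata2}'') but never justify it, so the argument does not close. There is also a sign slip throughout: the hypothesis says $\theta$ (not $-\theta$) is associated to $h$ and has rotation number $(q-p)/q$, so it is $-\theta$ that has rotation number $p/q$ and should land on the period two orbit of $f_{p/q}$; as written, your rotation-number bookkeeping is inverted.
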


\begin{proof}
  Since $\theta \in (2/3,1/3)$, the set $A_1 = \{ 2^{2k+1}\theta : k = 0,\ldots,q-1 \}$ is a subset of $(1/3,2/3)$ and by definition has $1$-angular rotation number $p/q$ under the map $t \mapsto 4t$. Hence the angles in $A_1$ correspond to the angles of the rays which land on the (unique) $p/q$ rabbit in $\M_4$ whose associated angles lie in $(1/3,2/3)$. Hence there are exactly $q$ angles which satisfy the properties of the proposition and since by Proposition~\ref{p:alldata2}, all $q$ different critical displacements can be obtained, we are done.
\end{proof}

\subsection{Proof of Theorem~\ref{mainthm2}}\label{Proofmainthm2}

We now prove the second main theorem. Again, we require a result from \cite{Thurstoneq}.

\begin{thm}\label{Per2case}[Theorem~B, \cite{Thurstoneq}]
  Suppose that two quadratic rational maps $F$ and $G$ have a period two cluster cycle with rotation number $p/n$ and critical displacement $\delta$. Then $F$ and $G$ are equivalent in the sense of Thurston.
\end{thm}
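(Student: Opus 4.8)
The statement is a rigidity result: the combinatorial data $(\rho,\delta)$, with $\rho = p/n$, should determine the Thurston class of a quadratic rational map with a period two cluster cycle. The plan is to compare $F$ and $G$ directly by producing an orientation preserving homeomorphism $\phi_0$ of $S^2$ with $\phi_0(P_F)=P_G$, lifting it through the dynamics, and showing the lift is isotopic to $\phi_0$ rel $P_F$; equivalently, one could fix once and for all a combinatorial model branched cover $\Phi_{\rho,\delta}$ and show every such $F$ is Thurston equivalent to it, so that $F\cong\Phi_{\rho,\delta}\cong G$. The geometric object carrying all of the combinatorics is the \emph{cluster skeleton} $\Theta_F$: the union of the stars $X_{\mathcal{C}_1}\cup X_{\mathcal{C}_2}$ of the two clusters (that is, the cluster points $x_1,x_2$ together with the $0$-internal rays of the critical orbit Fatou components meeting each of them) augmented by short internal-ray arcs through the remaining critical orbit components so as to make $\Theta_F$ a connected graph spanning $P_F$ whose complementary regions are disks. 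By construction $F(\Theta_F)=\Theta_F$, the first return map $F^{\circ 2}$ fixes each $x_i$ and rotates its incident arms by $p/n$, and the cyclic positions of the two critical orbits along the stars are exactly recorded by $\delta$; so the isotopy class, rel $P_F$, of the pair $(F,\Theta_F)$ depends only on $(\rho,\delta)$.

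First I would set up B\"ottcher coordinates on the immediate basins of the two superattracting cycles of $F$ and of $G$; since corresponding cycles have the same period $2n$ and the internal rays are matched by $\rho$ and $\delta$, these give a biholomorphic conjugacy between small neighborhoods $V_F\supset P_F$ and $V_G\supset P_G$ carrying the initial segments of $\Theta_F$ to those of $\Theta_G$. Next I would extend this local conjugacy to a global orientation preserving homeomorphism $\phi_0\colon (S^2,P_F,\Theta_F)\to (S^2,P_G,\Theta_G)$: once the cyclic order of the arms at $x_1,x_2$ and the locations of the critical orbits have been matched (which is precisely what the equality of $(\rho,\delta)$ provides), $\Theta_F$ and $\Theta_G$ are the ``same'' graph spanning $P_F$, $P_G$, and the complementary disks can be matched by Alexander's trick. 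Then, since $\phi_0$ is orientation preserving with $\phi_0(P_F)=P_G$, there is a lift $\phi_1$ with $G\circ\phi_1=\phi_0\circ F$; among the possible lifts I would choose the one agreeing with the B\"ottcher conjugacy near $P_F$, so that $\phi_1|_{P_F}=\phi_0|_{P_F}$.

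It remains to prove $\phi_0\simeq\phi_1$ rel $P_F$, and this is the step I expect to be the main obstacle. The idea is to show that $\phi_1$, like $\phi_0$, carries $\Theta_F$ to $\Theta_G$ up to isotopy rel $P_G$: along the arcs of $\Theta_F$ joining postcritical points to the cluster points one propagates correctness of $\phi_1$ outward from $P_F$ using $G\circ\phi_1=\phi_0\circ F$ together with the fact that these arcs are eventually mapped by iterates of $F$ into $V_F$, where $\phi_1$ is a genuine conjugacy. The only genuinely periodic locus, the pair of cluster points $x_1,x_2$ with $F^{\circ 2}(x_i)=x_i$, must be handled separately; here one uses that the combinatorial rotation number $\rho$ pins down the arm permutation at each $x_i$, that $\delta$ pins down the relative position of the two critical orbits, and that Lemma~\ref{p:Marysresult} rules out the degenerate configuration, so that the germ of the dynamics at each $x_i$ is combinatorially rigid. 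Once $\phi_1$ and $\phi_0$ agree up to isotopy rel $P_F$ on the spanning graph $\Theta_F$, I would conclude: after an isotopy rel $P_F$ the homeomorphism $\phi_0^{-1}\circ\phi_1$ is the identity on $\Theta_F$ and then, disk by disk, the identity on the complement, hence is isotopic to the identity rel $P_F$. Therefore $\phi_0$ and $\phi_1$ form a Thurston equivalence and $F\cong G$.

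For the model variant one additionally needs that $\Phi_{\rho,\delta}$ is unobstructed, which by the Levy cycle criterion in the bicritical case amounts to showing $\Phi_{\rho,\delta}$ has no Levy cycle; this can be arranged by construction, since a Levy cycle would, in the spirit of Theorem~\ref{t:Tanequivalences}, force a ray/internal-ray class containing a loop and two fixed points, which is incompatible with the cluster skeleton built above. With that in hand, Thurston's Theorem gives a unique rational map Thurston equivalent to $\Phi_{\rho,\delta}$, and the argument of the preceding two paragraphs (comparing an arbitrary such $F$ to $\Phi_{\rho,\delta}$ in place of $G$) yields $F\cong\Phi_{\rho,\delta}$, completing the proof.
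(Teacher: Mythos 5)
The paper offers no proof of this statement at all: it is imported verbatim as Theorem~B of \cite{Thurstoneq} and used as a black box, with the existence half supplied separately by Proposition~\ref{p:alldata2}. So the only question is whether your sketch actually establishes the rigidity statement, and it does not: the step you yourself flag as ``the main obstacle'' is left essentially unargued, and the mechanism you propose for it is flawed. The arcs of your cluster skeleton are the closed $0$-internal rays joining superattracting postcritical points to the repelling cluster points $x_1,x_2$; this graph is invariant and its arcs map onto one another, so they are \emph{not} ``eventually mapped by iterates of $F$ into $V_F$'', and the claimed outward propagation of agreement from the B\"ottcher neighbourhoods never gets started. Controlling the lift along the full arcs and, above all, at the periodic cluster points requires a genuine pullback/expansion argument or an explicit combinatorial normal form for the branched cover, which is exactly the content of the proof in \cite{Thurstoneq}; saying that $\rho$ ``pins down the arm permutation'' and $\delta$ ``pins down the relative position'' restates the hypothesis rather than proving that the chosen lift $\phi_1$ carries $\Theta_F$ to $\Theta_G$ up to isotopy rel $P_F$.

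There is also a structural reason your argument cannot be complete as written: nothing in it uses that the maps are quadratic. The skeleton-plus-Alexander scheme would apply verbatim to bicritical maps of higher degree with a period two cluster cycle, where the statement is \emph{false} --- the present paper explicitly notes (after Theorem~\ref{t:preciselyone}, citing Section~4 of \cite{Thurstoneq} and \cite{2SupAtt}) that Thurston equivalence with the same combinatorial data fails in degrees greater than $2$. Hence the implication ``same $(\rho,\delta)$ $\Rightarrow$ the lift respects the skeleton up to isotopy'' is precisely where degree-$2$-specific work must enter, and your proposal contains none; Lemma~\ref{p:Marysresult} excludes one degenerate configuration but is nowhere near a rigidity statement for the germ at $x_i$. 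The same gap infects your model-map variant, where in addition the unobstructedness of $\Phi_{\rho,\delta}$ is asserted rather than proved (the paper instead gets existence concretely, by exhibiting matings realising every admissible $(\rho,\delta)$).
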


\begin{proof}[Proof of Theorem~\ref{mainthm2}]
By Proposition~\ref{p:alldata2}, all combinatorial data can be obtained and by Theorem~\ref{Per2case}, maps with the same combinatorial data are Thurston equivalent, so all maps with period two cluster cycles are matings. By Proposition~\ref{p:2clusttunedrabbit}, one of the maps must be either the double rabbit $f_{p/q}$ or the unique period $2q$ polynomial $g_{p/q}$ that lies in the wake of $f_{p/q}$ in $\M$. Finally, by Proposition~\ref{p:hangles} the complementary map $h$ must have an associated angle with 2-angular rotation number $(q-p)/q$. 
\end{proof}

\subsection{Matings with the secondary map}\label{secondmap}

We now focus on the main difference between the matings in the fixed and period two cases, namely that there exist non-trivial shared matings in the period two case, which do not exist in the fixed case. In the fixed case, since one of the maps had to be a However, it is not true that

We describe the general structure of these Hubbard trees of the secondary maps. This can be calculated from algorithms in \cite{SchleichBru}, for example.

\begin{prop}
	The Hubbard tree of a secondary map can be described as follows. There are two period 2 points $p_1$ and $p_2$ with $n$ arms. One of the global arms at $p_1$ contains the critical point $c_0$ and the other global arms at $p_1$ have endpoints $c_1,c_3,\ldots,c_{2n-3}$. The point $p_2$ has one global arm which contains the critical point $c_0$, and the endpoints of the other global arms are $c_2, c_4, \ldots, c_{2n-2}$. The point $c_{2n-1}$ is on the arc $(p_2, c_{2n-1})$. Finally, there are no branch points on the arc $(p_1,p_2)$, and $c_0$ lies on $(p_1,p_2)$.
\end{prop}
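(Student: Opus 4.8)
The plan is to reconstruct the Hubbard tree of $g := g_{p/q}$ from its combinatorial rotation data, using the theory of admissible Hubbard trees of \cite{SchleichBru} together with the fact that $g$ is the unique period $2q$ map in the wake of $f_{p/q}$ that is \emph{not} the double rabbit. Write the critical orbit as $c_0 \mapsto c_1 \mapsto \cdots \mapsto c_{2q-1} \mapsto c_0$. By (the proof of) Proposition~\ref{p:2clusttunedrabbit}, $g$ has a period two orbit $\{p_1,p_2\}$ whose combinatorial rotation number is $p/q$; hence $p_1$ and $p_2$ lie on the Hubbard tree $T$, the first return map $g^{\circ 2}$ cyclically permutes the local arms at $p_1$, and at $p_2$, with rotation number $p/q$, and $g$ carries the arms at $p_1$ bijectively to the arms at $p_2$ and back. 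A short count shows $p_1$ and $p_2$ each have exactly $q$ arms: more than $q$ arms at $p_i$ would force more than $2q$ endpoints of $T$ into the critical orbit, which is impossible.

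The crux is to establish the global shape of $T$: that $p_1,p_2$ are its only branch points and that the critical point $c_0$ lies on the interior of the unique arc $[p_1,p_2]\subset T$. Branch points of a Hubbard tree are periodic or strictly preperiodic. For the periodic ones we use the analogue of Lemma~\ref{l:Wittner2} for $g$ (proved exactly as there, via \cite{Milnor:mandel}, since $g$ lies in the $1/2$-limb): the only biaccessible periodic points of $J(g)$ are the $\alpha$-fixed point and the two points of $\{p_1,p_2\}$, and $\alpha_g$ has only two arms. The strictly preperiodic case, and the location of $c_0$, are handled by applying the Bruin--Schleicher admissibility criterion to the kneading sequence of $g$: among the admissible trees compatible with the above period two data there are precisely two, the one obtained by tuning the basilica by the $q$-rabbit (which is $f_{p/q}$, and has $c_0$ at the end of an arm of $p_1$ or $p_2$) and the one in the Proposition (which has $c_0$ on $(p_1,p_2)$); since $g \neq f_{p/q}$, the latter holds. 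I expect this identification to be the main obstacle, since it is exactly the step that pins $g_{p/q}$ down combinatorially and excludes the a priori more complicated trees permitted by the valence count alone.

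Once the shape of $T$ is known, the remaining assertions follow by iterating $g$ on $c_0$, using $g(p_1)=p_2$, $g(p_2)=p_1$, and the fact that the only fold of $g$ on $T$ is at $c_0$. Since $g(\{p_1,p_2\})=\{p_1,p_2\}$, the image $g([p_1,p_2])$ already contains $[p_1,p_2]$, and therefore equals $[p_1,p_2]$ together with one extra arm ending at $c_1$; labelling $p_1$ so that this arm issues from $p_1$ shows that $c_1$ is the endpoint of an outward arm of $p_1$ and that $[p_1,p_2]$ contains a preimage of $c_0$, namely $c_{2q-1}$. Pushing forward, each outward arm endpoint at $p_1$ is carried to one at $p_2$ and conversely, so the forward orbit of $c_1$ runs through the outward arm endpoints of $p_1$ and of $p_2$ in alternation before returning to $[p_1,p_2]$, while the $p/q$-rotation of the arms under $g^{\circ 2}$ fixes their cyclic order around each of $p_1$ and $p_2$; this yields the stated distribution of the $c_j$ and the position of $c_{2q-1}$. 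I would present this final step briefly, as it is a routine finite computation once the shape of $T$ has been fixed.
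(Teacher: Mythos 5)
The paper itself offers no proof of this Proposition: it is preceded only by the remark ``This can be calculated from algorithms in \cite{SchleichBru}, for example,'' and then the statement is given (with the accompanying Figure~\ref{f:treepic}). So there is no argument in the paper to compare yours against; what you have written is, in effect, the only attempt at a proof.

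Your overall strategy---recover the period-two branch structure from Proposition~\ref{p:2clusttunedrabbit}, rule out extra branch points via the analogue of Lemma~\ref{l:Wittner2}, and then distinguish $g_{p/q}$ from $f_{p/q}$ by the position of $c_0$ on $T$---is reasonable. But there is a concrete error in the iteration step. You assert that ``$[p_1,p_2]$ contains a preimage of $c_0$, namely $c_{2q-1}$.'' The first half is correct: since $g([p_1,p_2]) \supset [p_1,p_2] \ni c_0$, some preimage of $c_0$ lies on $(p_1,p_2)$. But that preimage is the strictly \emph{pre}periodic one, not $c_{2q-1}$. In fact $c_{2q-1}$ lies on the interior of the arm $[p_2,c_{2q-2}]$, as the caption of Figure~\ref{f:treepic} makes explicit (the arc $(p_2,c_{2n-1})$ printed in the statement appears to be a typo for $(p_2,c_{2n-2})$). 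This matters because the distinctive feature of the secondary map's tree---the one that separates it from the double rabbit---is precisely that $c_{2q-1}$ is \emph{not} an endpoint of the tree but sits in the interior of an arm at $p_2$; your argument places it in the wrong location and would not, if pushed through, reproduce the stated tree. The correct bookkeeping is: $g$ is injective on $[p_2,c_{2q-2}]$ (since $c_0\notin[p_2,c_{2q-2}]$), and $g$ carries $[p_2,c_{2q-2}]$ onto $[p_1,c_{2q-1}] = [p_1,p_2]\cup[p_2,c_{2q-1}]$, with $c_{2q-1}\mapsto c_0$; this forces $c_{2q-1}\in(p_2,c_{2q-2})$.

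Beyond that slip, you have honestly flagged the real crux---that the Bruin--Schleicher admissibility analysis leaves exactly two trees compatible with the period-two data, one for $f_{p/q}$ and one for $g_{p/q}$---and you have left it unproved, calling it ``the main obstacle.'' As written, then, this is a plausible outline with one wrong intermediate claim and one acknowledged gap at the step that actually pins down the tree, rather than a complete proof.
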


As an example, Figure~\ref{f:treepic} shows the Hubbard tree for the secondary map which corresponds to the angles $(407/1023,408/1023)$.

\begin{figure}[htb]
\begin{center}
 
\input{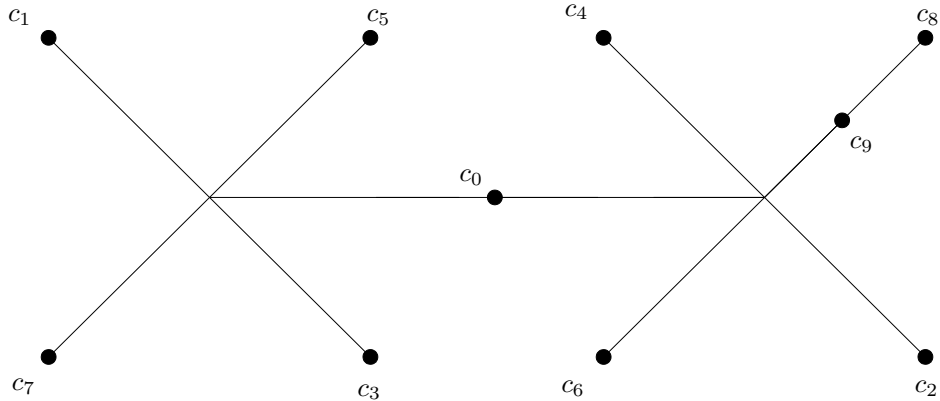}

\caption{Hubbard tree for a secondary map for $q=5$, $\rho = 2/5$. Notice in particular that $c_{2n-1}$ lies on the branch from $p_2$ which has endpoint $c_{2n-2}$.}
\label{f:treepic}
\end{center}
\end{figure}

We now ask which rational maps can be obtained by matings with the secondary map. We will prove the following.

\begin{prop}\label{l:critdisp}
	Suppose $F \cong g_{p/q} \Perp h$ is a rational map. Then the critical displacement of $F$ will be $1$ or $2n-1$. 
\end{prop}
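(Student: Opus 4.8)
The plan is to read the cluster structure of $F\cong g_{p/q}\Perp h$ off the Hubbard tree of $g_{p/q}$ described above, and then to show that the critical value component of $h$ is forced to be an arm of a star adjacent to the critical point component of $g_{p/q}$; by the period two definition of critical displacement this is exactly the assertion $\delta=1$ or $\delta=2q-1$ (here $n=q$).

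First I would fix the cluster structure, following the proof of Proposition~\ref{p:2clusttunedrabbit}: the period two orbit $\{p_1,p_2\}$ of $g_{p/q}$ is the cluster cycle of $F$, and I write $X_1$, $X_2$ for the stars at the cluster points $[p_1]$, $[p_2]$. Since $F$ cyclically permutes the (period $2q$) critical cycle components of each polynomial while permuting $\{[p_1],[p_2]\}$, and since by definition of a cluster at least one critical cycle component of each polynomial lies at a cluster point, in fact all $2q$ of them do, $q$ at each cluster point, with the $g_{p/q}$-arms and the $h$-arms alternating around each star. By Lemma~\ref{p:Marysresult} the two critical points of $F$ lie in distinct clusters; since the critical point $c_0$ of $g_{p/q}$ lies on the arc $(p_1,p_2)$, its component $U_0$ is an arm of one star, say $X_1$, so the critical point component of $h$ is an arm of $X_2$ and the critical value component $V_1$ of $h$ is an arm of $X_1$. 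Taking the marked critical point $c_1$ to be $c_0$, the definition of $\delta$ then says precisely that $\delta$ is the index of $V_1$ among the $2q$ arms of $X_1$ listed anticlockwise from $U_0$; so it suffices to show that $V_1$ is one of the two arms of $X_1$ neighbouring $U_0$.

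This adjacency is the heart of the matter, and it is exactly where the special shape of the secondary map's Hubbard tree enters: $c_0$ is the only critical orbit point on the arc $(p_1,p_2)$ and this arc carries no branch points. Consequently the root of $U_0$ sits in the interior of this branch-free arc (in contrast to the double rabbit, where the critical point component is natively attached at the period two point), and $U_0$ is the unique $g_{p/q}$-arm of $X_1$ ``facing $p_2$'': under the mating, the arc of the Hubbard tree from $p_1$ to $p_2$ descends to a path in the dynamical plane of $F$ joining $[p_1]$ to $[p_2]$ which enters $X_1$, passes through $U_0$, and then leaves towards $[p_2]$ without meeting any further critical cycle component of $g_{p/q}$ or any branch point. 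The arm of $X_1$ along which this path leaves towards $[p_2]$ must therefore be one of the two arms flanking $U_0$; and since the critical point component of $h$ sits in $X_2$ at $[p_2]$, that arm is the first return of the $h$-cycle into $X_1$ along the path, which is $V_1$. Hence $V_1$ neighbours $U_0$ and $\delta=\pm1$. (The ``buried'' position of $c_{2q-1}$ on the arc $(p_2,c_{2q-2})$ plays the dual role in making this picture consistent on the $X_2$ side, and also explains why the same argument does not apply to the double rabbit.)

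I expect the adjacency step to be the only genuine obstacle. Making rigorous the claims ``the root of $U_0$ is glued into $[p_1]$ immediately next to the native $g_{p/q}$-arms of $X_1$'' and ``the $h$-arm of $X_1$ on the $p_2$-side of $U_0$ is $V_1$'' requires a careful analysis of the ray equivalence relation: one identifies the external rays of $g_{p/q}$ landing at $p_1$ that separate its arms, locates the root of $U_0$ relative to these rays using that $(p_1,p_2)$ is branch-free, and then, via the identification $R_{g_{p/q}}(t)\sim R_h(-t)$ together with the description of $2$-angular rotation numbers, pins down which ray of $g_{p/q}$ at $p_1$ is glued to the ray of $h$ landing at the root of $V_1$. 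Lemmas~\ref{l:matingrayclass}, \ref{l:armslemma} and \ref{l:Wittner2} are the tools for excluding extraneous identifications; everything else above is immediate from the earlier results.
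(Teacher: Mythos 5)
Your geometric picture is largely the right one — the key feature is indeed that for $g_{p/q}$ the critical point component $U_0$ is ``buried'' in the interior of the branch-free arc $(p_1,p_2)$, rather than being attached at a period-two point as it is for $f_{p/q}$ — but the proposal stops exactly where the real work begins. The sentence ``that arm is the first return of the $h$-cycle into $X_1$ along the path, which is $V_1$'' is an assertion, not an argument: the cluster at $[p_1]$ contains $q$ components from the critical cycle of $h$, and nothing in what you have written explains why the one adjacent to $U_0$ must be the critical \emph{value} component $V_1$ rather than $V_3, V_5,\ldots$. (The phrase ``first return along the path'' does not single out $V_1$: after the Hubbard-tree arc exits $U_0$ it passes through a strictly preperiodic co-root into the Julia set of $g$, and it is not a priori ``next to'' any particular $h$-component.)

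The paper closes exactly this gap with an angular analysis that your proposed toolkit (Lemmas~\ref{l:matingrayclass}, \ref{l:armslemma}, \ref{l:Wittner2}) does not supply. Writing $\theta,\theta+1,\theta+2,\theta+3$ (in units of $1/(2^{2q}-1)$) for the four consecutive angles with $\theta,\theta+3$ landing at $p_1$ and $\theta+1,\theta+2$ at the root of the critical value component $U_1$ of $g$, the paper first rules out all but two possible gluings of $R_h(-\theta),\ldots,R_h(-(\theta+3))$: Lemma~\ref{l1} is a separation/planarity argument excluding pairings with other rays at $p_1$, and Lemma~\ref{l2} is a minimal-sector-width argument excluding the ``local'' pairings $-\theta\sim-(\theta+1)$ and $-(\theta+2)\sim-(\theta+3)$. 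In the two remaining cases the identified rays bound a sector of angular width $2$, whose preimage has width $1$ and therefore must contain the critical value of $h$; this forces the common landing point to be the root of $V_2$, so $V_2$ is adjacent to $U_1$, and pulling back by one iterate gives $V_1$ adjacent to $U_0$. It is precisely this width-counting that pins down $V_1$ and hence $\delta=\pm1$, and it has no counterpart in your outline. Until that step is supplied, the proof is incomplete.
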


For ease of notation we will drop the subscript on $g_{p/q}$. We will break down the proof into a sequence of lemmas, in which we will show that it is not possible for the rational map constructed by the mating $g \Perp h$ to have any other critical displacement apart from $1$ or $2n-1$. First a comment on the angles of the rays that will be relevant to this discussion. The branch at the period two point $p_1$ in the Hubbard tree containing the critical value of $g$ is separated from the other branches at the period two point by two rays, of angle $\theta$ and $\theta+3$ (we suppress the denominator $2^{2n} -1$). The rays of angles $\theta +1$ and $\theta+2$ land at the root point of the critical value component of $g$, see Figure~\ref{f:critvalrays}.

\begin{figure}[htb]
\begin{center}
 
\input{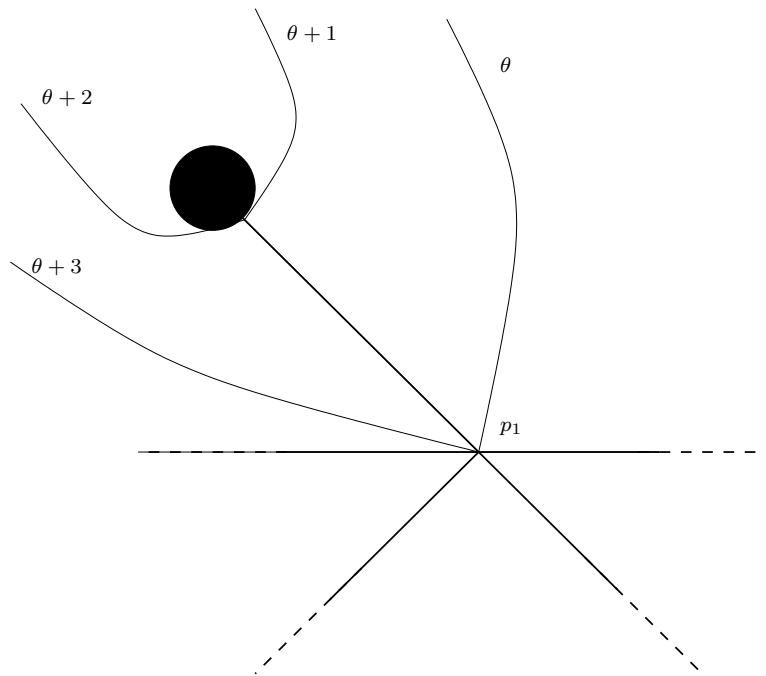}

\caption{The rays landing at the period two point and critical value component of the map $g$.}
\label{f:critvalrays}
\end{center}
\end{figure}

We claim that the ray graph containing the point $p_1$ (as in the picture) must have (precisely) one of the following two properties. The branch of the graph of the ray equivalence class containing the root point $r$ of the critical value component $U_1$ of $g$ (in other words, the landing point of the rays of angle $\theta+1$ and $\theta+2$) must contain either the ray of angle $\theta$ or the ray of angle $\theta+3$. We will show that every other possibility is impossible.

\begin{lem}\label{l1}
 The branch of the ray equivalence class containing $r$ cannot contain the ray $R_{g}(\phi)$, which lands at $p_1$ and such that $\phi \notin \{ \theta, \theta+3 \}$.
\end{lem}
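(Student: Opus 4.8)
The plan is to argue by contradiction, using that $F \cong g \Perp h$ is rational, so the mating is unobstructed; by Theorem~\ref{t:Tanequivalences} and Lemma~\ref{TanShish} no ray equivalence class contains a closed loop, and each periodic ray class — in particular $[p_1]$ — is a tree embedded in $S^2_{f,g}$. Suppose, contrary to the statement, that the branch of $[p_1]$ at $p_1$ associated with $r$ contains a ray $R_g(\phi)$ landing at $p_1$ with $\phi\notin\{\theta,\theta+3\}$.

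First I would pin down the local picture at $p_1$ from the description of the Hubbard tree of $g$ together with Figure~\ref{f:critvalrays}: the $n$ external rays of $g$ landing at $p_1$ cut a neighbourhood of $p_1$ into $n$ sectors, and exactly one of these, the sector $\Sigma$ flanked by the two \emph{consecutive} rays $R_g(\theta)$ and $R_g(\theta+3)$ (consecutive because $R_g(\theta+1)$ and $R_g(\theta+2)$ land not at $p_1$ but at $r$), is the one carrying the Hubbard-tree arm towards the critical value; hence $\Sigma$ contains the critical value component $U_1$ and its root $r$, with $r$ lying in the \emph{open} sector $\Sigma$. Since $\phi\notin\{\theta,\theta+3\}$, the ray $R_g(\phi)$ is not a boundary ray of $\Sigma$; it is flanked by two sectors $\Sigma',\Sigma''$, neither of which is $\Sigma$, so $r\notin\overline{\Sigma'}\cup\overline{\Sigma''}$.

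The core of the argument is a confinement statement: the branch $B$ of $[p_1]$ issuing from $p_1$ along $R_g(\phi)$ is contained in $\overline{\Sigma'}\cup\overline{\Sigma''}$. Indeed, the only way for $B$ to leave this region would be to cross the boundary of $\Sigma'$ or of $\Sigma''$, which, apart from $R_g(\phi)$ itself, is made up of further external rays landing at $p_1$ (and, in the formal mating, their continuations into the dynamical plane of $h$) — all of which lie in $[p_1]$; a crossing of $B$ with any such arc is an intersection of two arcs of the embedded tree $[p_1]$ at a point other than $p_1$, producing a cycle, hence a Levy cycle (Lemma~\ref{TanShish}) and a Thurston obstruction, contradicting that $F$ is rational. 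Therefore $B\subset\overline{\Sigma'}\cup\overline{\Sigma''}$, so $B$ cannot reach $r$ and a fortiori cannot be the branch of $[p_1]$ associated with $r$. This is the desired contradiction.

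I expect the genuine difficulty to be in justifying the confinement step rigorously in the formal mating $S^2_{f,g}$: there $\overline{R_g(\theta)}$, $\overline{R_g(\theta+3)}$ and the other rays at $p_1$ are not Jordan curves but arcs running out to the circle at infinity and, after the identification at infinity, on into the dynamical plane of $h$, so one has to track exactly how the boundary of each sector closes up on the $h$-side and check that the portions relevant to $B$ really do belong to $[p_1]$. The clean way to do this is to replace $[p_1]$ by a thin tubular neighbourhood, as in Lemma~\ref{TanShish}, and argue with its honest bounding Jordan curves, or equivalently to pass to the topological mating $K(g)\Perp K(h)$, where $[p_1]$ has been collapsed and the statement reduces to one about the cyclic order of the arms at the cluster point. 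One further point deserves explicit comment: $r$ does not itself lie in $[p_1]$ (its period is $2n\neq 2$, so by the remarks preceding Lemma~\ref{l:armslemma} it cannot lie in a periodic ray class of period $2$), so "the branch associated with $r$" must be understood as the branch of $[p_1]$ whose sector contains $r$; it is exactly under this reading that the Hubbard-tree data identify that sector with $\Sigma$ and force the first ray of that branch to be $R_g(\theta)$ or $R_g(\theta+3)$.
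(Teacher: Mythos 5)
Your proof goes by a genuinely different route from the paper's, but the central \emph{confinement} step contains a real gap that the paper's argument is specifically designed to avoid.

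The claim that the branch $B$ of $[p_1]$ issuing along $R_g(\phi)$ ``is contained in $\overline{\Sigma'}\cup\overline{\Sigma''}$'' cannot be justified by the no-crossing observation alone. The other external rays at $p_1$, together with their continuations into the $h$-plane, are arcs (or sub-trees of $[p_1]$), not Jordan curves, and the complement of an embedded tree in $S^2$ is \emph{connected}. So the fact that $B$ cannot meet the branches $B_\theta$, $B_{\theta+3}$ except at $p_1$ does not trap $B$ in any region: $B$ immediately leaves $\overline{\Sigma'}\cup\overline{\Sigma''}$ through the circle at infinity (it must, to follow $R_h(-\phi)$), and nothing in your argument prevents its $h$-side part from re-entering the $g$-plane through the arc of the circle at infinity between $\theta$ and $\theta+3$, i.e.\ into $\Sigma$. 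What is missing is a \emph{closed} separating curve, and external rays alone never supply one. This is exactly what the paper builds: it takes $\gamma$, the ray-path from $r$ to $p_1$ through $[p_1]$, and closes it up with the \emph{regulated arc} $[p_1,r]\subset K(g)$, obtaining a genuine Jordan curve $\Gamma=\gamma\cup[p_1,r]$. This $\Gamma$ does separate $S^2$; the paper then observes that it separates the next root point $r'$ (anticlockwise of $r$) from the next ray $R_g(\phi')$ (anticlockwise of $\phi$), while the dynamics --- which preserve cyclic order and cannot make rays cross $\Gamma$ --- force the branch containing $r'$ to contain $R_g(\phi')$. That contradiction is the proof. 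The ingredient you are missing is precisely the regulated arc inside $K(g)$, which is what turns a non-separating union of rays into a separating Jordan curve.

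A secondary issue: your closing comment that $r\notin[p_1]$ is at odds with the paper's own proof, which explicitly speaks of ``the (unique) path through external rays from $r$ to $p_1$'' and of ``the branch containing $r$''; both presuppose $r\in[p_1]$. The period obstruction you cite does not apply as you read it: Lemma~\ref{l:armslemma} shows that a period-$2q$ orbit can meet each of the $q$ global arms of $[p_1]$ at $p_1$ in exactly one point, so $[p_1]$ (period $2$ as a set under $F$, with a first return that permutes its $q$ arms) quite consistently contains $q$ of the $2q$ points of the orbit of $r$. Re-reading the lemma as a statement about which sector contains $r$, rather than about which branch of $[p_1]$ contains $r$, changes what is being proved and is not the reading the paper uses.
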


\begin{proof}
It is clear that each branch of the graph must contain exactly one of the rays landing at the point $p_1$. If it contained more than one then they would form a loop and thus would generate a Levy cycle. Suppose the branch containing $r$ contains a ray of angle $\phi \notin \{ \theta, \theta+3 \}$, and $R_g(\phi)$ lands at $p_1$. Let $\gamma$ denote the (unique) path through external rays from $r$ to $p_1$. Since the only possible biaccessible points in $J(g)$ on this path are $r$ and $p_1$, we see that (using $[p,r]$ as the notation for the regulated arc from $p$ to $r$):
\[
	\Gamma = \gamma \cup [p_1,r]
\]
separates the sphere into two pieces. In particular, it separates the root point $r'$ on the branch anticlockwise in the cyclic order from $r$ from the ray of angle $\phi'$ which is the first angle anticlockwise round from $\phi$ in the cyclic ordering of rays at $p$. But since $F$ has to maintain this cyclic ordering, the branch containing $r'$ has to contain the ray of angle $\phi'$, and since rays cannot cross this is a contradiction, see Figure~\ref{f:critpic1}.

\begin{figure}[ht]
\begin{center}

\input{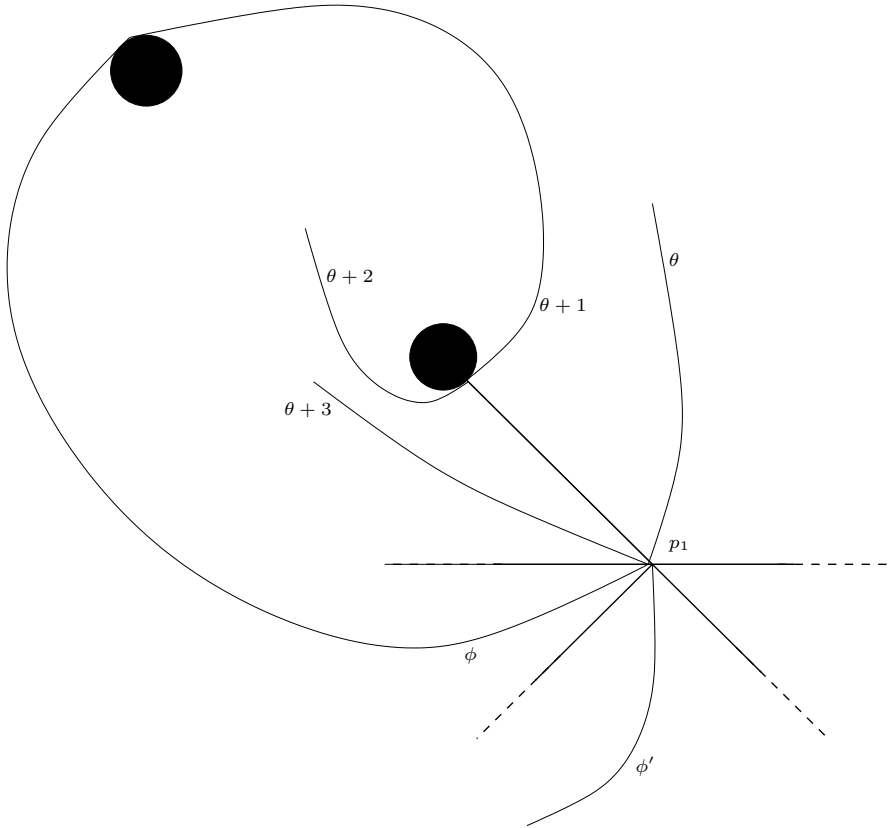}

\caption{The case where the branch containing $r$ contains the ray of angle $\phi$.}
\label{f:critpic1}
\end{center}
\end{figure}	 
\end{proof}

So we now know that the branch of the graph of the ray equivalence class containing $r$ must contain the ray $R_g(\theta)$ or $R_g(\theta+3)$. We now study this branch in more detail.

\begin{lem}\label{l2}
\mbox{}
\begin{enumerate}
  \item{The rays $R_h(-(\theta+1))$ and $R_h(-\theta)$ do not land at the same point on $J(h)$.}
  \item{The rays $R_h(-(\theta+2))$ and $R_h(-(\theta+3))$ do not land at the same point on $J(h)$.}
\end{enumerate}
\end{lem}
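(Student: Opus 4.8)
The plan is to argue by contradiction in each case, mimicking the proof of Lemma~\ref{l1}. Assume, for part (1), that $R_h(-(\theta+1))$ and $R_h(-\theta)$ land at a common point $w\in J(h)$. Then
\[
  \Gamma \;=\; [p_1,r]\;\cup\;\overline{R_g(\theta+1)}\;\cup\;\overline{R_h(-(\theta+1))}\;\cup\;\overline{R_h(-\theta)}\;\cup\;\overline{R_g(\theta)}
\]
is a simple closed curve in $S^2_{g,h}$: it meets $K(g)$ only along the regulated arc $[p_1,r]$, meets $K(h)$ only at $w$, the $g$-rays and $h$-rays are joined at the identified points of the circle at infinity for the angles $\theta$ and $\theta+1$, and distinct external rays do not cross. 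Deleting the interior of $[p_1,r]$ leaves a path $\gamma$ through external rays from $r$ to $p_1$ whose only biaccessible points in $J(g)$ are $r$ and $p_1$ — i.e.\ precisely the situation of Lemma~\ref{l1}, now with $\gamma$ reaching $p_1$ along $R_g(\theta)$ (which is consistent with the conclusion of Lemma~\ref{l1} that this branch of $[r]$ must contain $R_g(\theta)$ or $R_g(\theta+3)$).

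The next step is to locate the critical orbit data relative to $\Gamma$. By Figure~\ref{f:critvalrays}, $R_g(\theta+1)$ and $R_g(\theta+2)$ bound the critical value component $U_1$ of $g$ at $r$, while $R_g(\theta)$ and $R_g(\theta+3)$ bound, at $p_1$, the Hubbard-tree arm carrying $r$ and $c_1$; hence $U_1$, together with $R_g(\theta+2)$ and $R_g(\theta+3)$, lies in one of the two discs cut off by $\Gamma$, and one checks — using the description of the Hubbard tree of the secondary map (the Proposition preceding Figure~\ref{f:treepic}) — that no critical orbit point of $g$ lies in the other disc. Now $[p_1]=[r]$ is the period-two cluster point, and the first-return map $F^{\circ 2}$ permutes its arms with combinatorial rotation number $p/q$; so, exactly as in Lemma~\ref{l1}, there is a root point $r'$ of a critical orbit Fatou component of $g$ and an external ray $R_g(\phi')$ landing at $p_1$ which the $F$-equivariance of the cyclic order at $p_1$ forces to lie in the same complementary component of $\Gamma$, while in fact $\Gamma$ separates them. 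Since external rays cannot cross, this is the contradiction. Part (2) is symmetric: run the same argument with $R_g(\theta+2),R_g(\theta+3)$ in place of $R_g(\theta+1),R_g(\theta)$, so that $\gamma$ now reaches $p_1$ along $R_g(\theta+3)$ and the contradiction comes from the opposite side of the configuration.

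I expect the main obstacle to be the bookkeeping rather than any new idea: one must track which critical orbit component and which external ray of $g$ — and correspondingly of $h$ — sits on each side of $\Gamma$, and match this against the cyclic order imposed by the rotation number $p/q$ and by the Hubbard tree of the secondary map, in order to pin down the particular pair $(r',R_g(\phi'))$ whose required coexistence on one side of $\Gamma$ is violated. It is worth noting that the naive hope of turning $\Gamma$ directly into a Levy cycle fails here, because $\Gamma$ bounds a disc containing at most one postcritical point; the contradiction genuinely comes from the cyclic-order obstruction, as in Lemma~\ref{l1}.
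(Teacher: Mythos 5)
Your set-up mimics Lemma~\ref{l1}, but the cyclic-order mechanism that produced the contradiction there is not available here, and the argument as written has a real gap at the key step. In Lemma~\ref{l1} the curve $\Gamma$ reached $p_1$ along a ray $R_g(\phi)$ with $\phi\notin\{\theta,\theta+3\}$, so $\Gamma$ genuinely cut the sphere into two pieces \emph{each of which} contained rays landing at $p_1$ and critical-orbit root points of $g$; equivariance of the cyclic order at $p_1$ then forced a pair $(r',R_g(\phi'))$ onto the same side while $\Gamma$ separated them. In your configuration $\gamma$ reaches $p_1$ along $R_g(\theta)$ itself, which is precisely one of the two rays Lemma~\ref{l1} \emph{permits}. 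As you yourself observe, one of the two complementary discs of your $\Gamma$ (the width-one sliver between $R_g(\theta)$ and $R_g(\theta+1)$) contains \emph{no} critical-orbit point of $g$ and no ray landing at $p_1$ other than those on $\Gamma$. So every candidate pair $(r',R_g(\phi'))$ sits in the same component of $S^2\setminus\Gamma$, and there is nothing for equivariance to contradict. Your phrase ``the main obstacle is bookkeeping rather than any new idea'' is exactly where the proof breaks: no amount of bookkeeping on the $g$-side will produce a separated pair, because none exists.

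The paper's proof instead works on the $h$-side and uses an angular-width argument rather than a cyclic-order one. The two rays $R_h(-\theta)$ and $R_h(-(\theta+1))$ bound, at their putative common landing point $\zeta$, a sector of the smallest possible angular width (one unit of $1/(2^{2n}-1)$). If $\zeta$ were a root point of a critical-orbit Fatou component of $h$, the minimal width would force that component to be the critical value component, putting both critical values in the same cluster and contradicting Lemma~\ref{p:Marysresult}. If $\zeta$ is not a root point, the same minimality forces the width-one sector to trap the root point of the component containing the critical point of $h$, which must lie in the ray-equivalence class of the cluster point but is now cut off from it by the two rays; this too is a contradiction. Nothing in your draft reaches either of these conclusions, since you never examine what the width-one sector on the $h$-side is obliged to contain. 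To repair the argument you would need to import some version of the sector-width observation — at which point you would essentially be reproducing the paper's proof, and the curve $\Gamma$ you built becomes an unnecessary detour.
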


\begin{proof}
Suppose that the ray $R_h(-(\theta+1))$ lands at the same point as $R_h(-\theta)$. This common landing point $\zeta$ cannot be the root point of a critical orbit component, since the size of the sector would mean this would have to be a critical value component, and so the critical values would be in the same cluster, a contradiction of Proposition~\ref{p:Marysresult}. However, the angular width of the sector bounded by $\zeta$ and the external rays $R_h(-(\theta+1))$ and $R_h(-\theta)$ is the smallest it can possibly be, and so it must contain the root point of the Fatou component which contains critical point of $h$. But this is another contradiction since the root point of the critical value component needs to be in the ray equivalence class of $f(p_1)$ and it is separated from this point. The proof of when $R_h(-(\theta+2))$ lands at the same point as $R_h(-(\theta+3))$ is analogous to the above.	
\end{proof}

\begin{proof}[Proof of Proposition~\ref{l:critdisp}]
In light of Lemma~\ref{l1} and Lemma~\ref{l2}, the only remaining possibility is that the rays $R_h(-(\theta+2))$ and $R_h(-\theta)$ land at the same point or the rays $R_h(-(\theta+1))$ and $R_h(-(\theta+3))$ land at the same point. Since both situations are essentially the same will we discuss only the first one. The common landing point $\xi$ of the two rays must be the root point of a critical orbit component. For if not, we notice that
\[
	R_g(\theta+2) \cup	R_h(-(\theta+2)) \cup R_h(-\theta) \cup R_g(\theta)
\]
separates the ray $R_h(-(\theta+1))$ from all other rays with the necessary denominator (Figure~\ref{f:critpic2}). Since the root point of a critical orbit component must have (at least) two rays landing on it, this means that this branch of the graph of the ray equivalence class cannot contain a root point of a critical orbit component. But then none of the branches can, as each one maps homeomorphically onto its image and they are periodic. Hence the rays $R_h(-(\theta+2))$ and $R_h(-\theta)$ land at the root point $\hat{r}$ of a critical orbit component. The angular width of the sector containing this component is 2, and so the angular width of its pre-image is 1. This means that the pre-image sector contains the critical value, and so in particular the pre-image of $\hat{r}$ is the root point of the critical value component. Hence $\hat{r}$ is the root point of the component $V_2$ containing the second iterate of the critical point of $h$. Since $V_2$ is adjacent to $U_1$, it follows their pre-images are adjacent, and so the critical point component of $g$ will be adjacent to the critical value component of $h$. This means that the critical displacement of the resultant rational map will have to be $\pm 1$, or equivalently, equal to $1$ or $2n-1$.
\end{proof}

This proof shows us that, in order for a rational map to be a mating with the secondary map as one of the participants, one of the clusters has to see a critical point adjacent to the critical value of the other critical point.

\begin{figure}[ht]
\begin{center}

\input{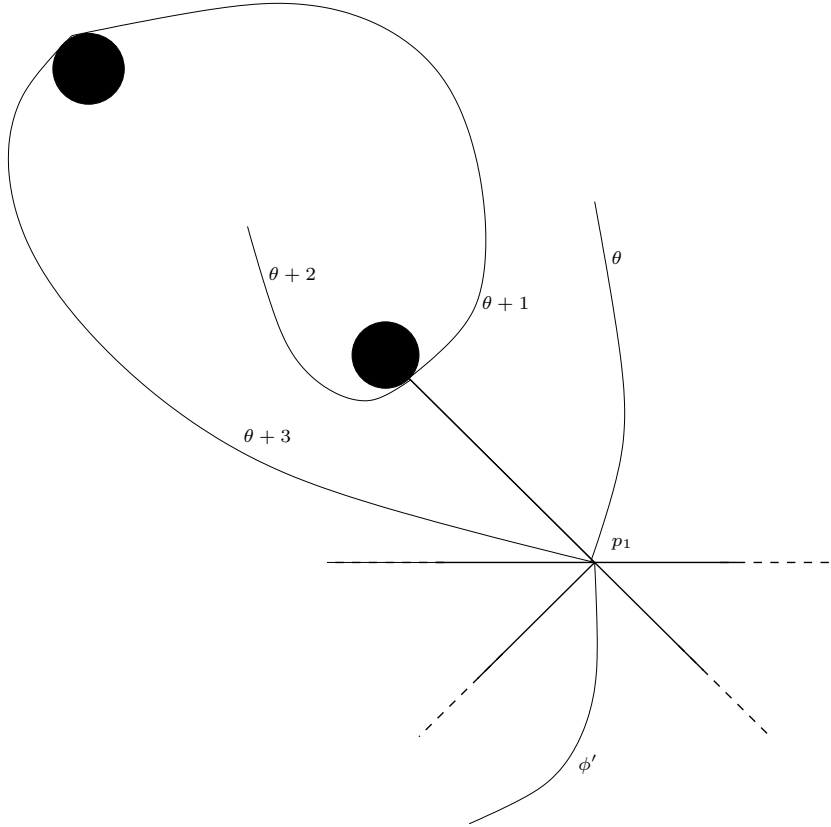}

\caption{How the ray class is formed near the critical value component of $g_{p/q}$.}
\label{f:critpic2}
\end{center}
\end{figure}	

\begin{cor}\label{c:bothmatings}
  Suppose $F \cong g_{p/q} \Perp h$ is a rational map with a period two cluster cycle. Then $F' \cong f_{p/q} \Perp h$ is also a rational map with a period two cluster cycle.
\end{cor}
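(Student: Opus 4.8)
The plan is to extract from the proof of Proposition~\ref{l:critdisp} enough information about the complementary map $h$ to apply the converse Proposition~\ref{p:hanglesconverse}, but now with $f_{p/q}$ in place of $g_{p/q}$.

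First I would record what Proposition~\ref{l:critdisp} and its proof already give us: the mating $F\cong g_{p/q}\Perp h$ has critical displacement $\delta$ equal to $1$ or $2q-1$, and in the ray class that becomes a cluster point the critical value component $V_1$ of $h$ sits adjacent to the critical point component of $g_{p/q}$. Tracing the rays in that proof (Figures~\ref{f:critvalrays}--\ref{f:critpic2}), the root point of $V_1$ is the landing point of external rays of $h$ whose angles are the negatives of angles of external rays of $g_{p/q}$ landing on one of the two points of the period two cycle $\{p_1,p_2\}$. Since those rays of $g_{p/q}$ land on a period two point with combinatorial rotation number $p/q$ (Proposition~\ref{p:2clusttunedrabbit}), their common $2$-angular rotation number is $p/q$, and hence the rays of $h$ landing at the root of $V_1$ have $2$-angular rotation number $(q-p)/q$; in particular $h$ has an associated angle of $2$-angular rotation number $(q-p)/q$.

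Next I would check the arc condition. For any map in $\M_{1/2}$ — in particular for $g_{p/q}$ — the two points of the period two cycle have their external rays lying in the two complementary arcs of $S^1$ cut off by $R_{1/3}$ and $R_{2/3}$, one point in each, and the arc $(2/3,1/3)$ is invariant under $t\mapsto -t$. So provided the point of $\{p_1,p_2\}$ to which $V_1$ is attached is the one whose rays lie in $(2/3,1/3)$, the associated angle $\psi$ of $h$ found above lies in $(2/3,1/3)$. Granting this, Proposition~\ref{p:hanglesconverse} applies to $\psi$ and yields that $F'\cong f_{p/q}\Perp h$ (and $h\Perp f_{p/q}$) has a period two cluster cycle, which is exactly the assertion.

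The hard part is the hypothesis in the previous paragraph: determining, from the ray-class picture in the proof of Proposition~\ref{l:critdisp} together with the Hubbard tree of the secondary map, to which of $p_1$, $p_2$ the critical value component of $h$ attaches, and reconciling this with the convention (used for $f_{p/q}$ in Proposition~\ref{p:hangles}) that the relevant period two point is the one with rays in $(2/3,1/3)$. If this direct identification proves awkward, an alternative endgame is available: by Proposition~\ref{p:hanglesconverse} and its proof the $q$ angle-orbits of $2$-angular rotation number $(q-p)/q$ lying in $(2/3,1/3)$ are in bijection with the $q$ possible critical displacements, so once we know $\delta(F)=\pm 1$ and match the $\delta=\pm1$ identifications in the proof of Proposition~\ref{l:critdisp} with those coming from mating with $f_{p/q}$, the map $h$ must be precisely the complementary map attached to the corresponding candidate angle, and then $f_{p/q}\Perp h$ has a period two cluster cycle by construction.
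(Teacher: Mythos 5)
Your proposal follows essentially the same route as the paper: both reduce the claim to Proposition~\ref{p:hanglesconverse} by identifying an angle associated to $h$ with $2$-angular rotation number $(q-p)/q$ lying in $(2/3,1/3)$, using as the crucial observation that $f_{p/q}$ and $g_{p/q}$ have the same set of external angles landing on their period two orbits (because $g_{p/q}$ lies in the wake of $f_{p/q}$).

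Where you differ is in the treatment of the arc condition, which you flag as the ``hard part.'' In the paper this step is essentially free: since $F\cong g_{p/q}\Perp h$ is a rational map, the mating is unobstructed, and by Theorem~\ref{t:Tanequivalences} the polynomial $h$ cannot lie in the limb $\M_{1/2}$ conjugate to the one containing $g_{p/q}$; consequently the angle associated to $h$ automatically lies in $(2/3,1/3)$, and there is no need to determine by hand to which of $p_1,p_2$ the critical value component of $h$ is attached in the ray class. Your alternative ``counting'' endgame is a valid fallback, but it is more work than is needed: the obstruction criterion already pins down the arc. Once you replace the direct identification of the attaching point by this appeal to non-obstruction, your argument is the paper's argument.
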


\begin{proof}
  There are precisely two rays landing at the root points of the critical orbit Fatou components of $h$ and the orbits of these rays are disjoint. Since $F$ is a rational map, the angle $\theta$ associated to $h$ must belong to $(2/3,1/3)$. Furthermore, as $F$ has a period two cluster cycle, one of these ray orbits is made up of angles of the form $-\theta_i$, where $\theta_i$ are the $q$ angles of the rays which land on the period two orbit of $g_{p/q}$. But these are precisely the angles of the rays which land on the period two orbit of $f_{p/q}$. The orbit therefore has $2$-angular rotation number $p/q$ and so one of the angles associated to $h$ satisfies the conditions of Proposition~\ref{p:hanglesconverse}.
\end{proof}

The converse to Corollary~\ref{c:bothmatings} is not true. For example, if $h$ is the quadratic polynomial associated to the angles $(13/63,14/63)$ then $f_{1/3} \Perp h$ is equivalent to a rational map with combinatorial data $(1/3,3)$. However $g_{1/3} \Perp h$ is equivalent to a rational map that does not have a period two cluster cycle. We should now specify exactly how the two critical displacements $\delta = 1$ and $\delta = 2q-1$ are obtained under the mating with the secondary map. Indeed, it is easy to see (for example, by considering ``drawing the rays shut'' in Figure~\ref{f:critpic2}) that if we are in the case where $R_h(-\theta)$ and $R_h(-(\theta+2))$ land at the same point, then $\delta = 1$. Similarly, if we are in the case where $R_h(-(\theta+1))$ and $R_h(-(\theta+3))$ land at the same point, then we will obtain $\delta = 2q-1$ for the resultant rational map. Conversely, if $\delta =1$ (respectively $\delta = 2q-1$) then the rays $R_h(-\theta)$ and $R_h(-(\theta+2))$ (respectively $R_h(-(\theta+1))$ and $R_h(-(\theta+3))$) land at the same point.

%\begin{cor}
%If $R^h_{-\theta}$ and $R^h_{-(\theta+2)}$ land at the same point, then the resultant rational map $F \cong g_{p/q} \Perp h$ will have critical displacement equal to $1$. Alternatively, if $R^h_{-(\theta+1)}$ and $R^h_{-(\theta+3)}$ land at the same point, then
%\end{cor}
%
%\begin{proof}
%  The relevant arm of the ray class which becomes one of the cluster points can be seen in Figure~\ref{f:critpic2}.
%\end{proof}

\section{Shared Matings}\label{s:shared}

A little further work shows that if $g \Perp h$ has critical displacement 1 (respectively $2n-1$), then $f \Perp h$ has critical displacement $2n-1$ (respectively 1). The proof of this fact will allow us to discuss how the matings are shared in the period two cluster case. In the case where the clusters were fixed, the nature of the sharing was relatively simple: one just needs to choose the appropriate rabbit and then mate it with the correct choice of a complementary map to get the required critical displacement (depending on whether the rabbit was chosen to be the first or second map in the mating). In this case, the introduction of the secondary map means the discussion of shared matings has an extra degree of complexity.

\begin{lem}
  $G \cong g_{p/q} \Perp h$ has critical displacement $1$ (respectively $2q-1$) if and only if $F \cong f_{p/q} \Perp h$ has critical displacement $2q-1$ (respectively $1$).
\end{lem}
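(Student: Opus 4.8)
The plan is to relate the two matings $G \cong g_{p/q} \Perp h$ and $F \cong f_{p/q} \Perp h$ through their common complementary map $h$, using the fact (established in Corollary~\ref{c:bothmatings}) that both are matings with period two cluster cycles and that they share the same orbit of associated angles $-\theta_i$ landing at the root points of the critical orbit components of $h$. The key observation is that in both matings the cluster point is the class of the period two orbit, and the external rays of $g_{p/q}$ (respectively $f_{p/q}$) landing at this orbit have the \emph{same} angles (namely the $\theta_i$), since both the double rabbit and the secondary map have the same period two orbit with the same $2$-angular rotation number $p/q$. What differs between $g_{p/q}$ and $f_{p/q}$ is the position of the critical value component relative to that period two orbit, and hence which pair of rays $R_g(\theta+1),R_g(\theta+2)$ (versus the analogous rays for $f$) bound the critical value component. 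The remark at the end of Section~\ref{secondmap} already pins down, for the secondary map, that $\delta = 1$ corresponds to $R_h(-\theta)$ and $R_h(-(\theta+2))$ landing together, and $\delta = 2q-1$ to $R_h(-(\theta+1))$ and $R_h(-(\theta+3))$ landing together.

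The steps I would carry out are: first, recall from Figure~\ref{f:critvalrays} and the surrounding discussion the precise angles of the rays at the period two point and critical value component of $g$, and record the corresponding picture for $f_{p/q}$ — here the critical value component of $f_{p/q}$ is separated from the rest of the arms in the opposite way, so that the rays bounding its critical value component correspond (after negation, under the mating identification $t \mapsto -t$) to a different adjacent pair of $h$-rays. Second, I would translate the statement ``$G$ has critical displacement $1$'' into the statement ``$R_h(-\theta)$ and $R_h(-(\theta+2))$ land at a common point of $J(h)$, which is the root of the component $V_2$ of $h$'' using the already-proved converse at the end of the previous section. Third — the crux — I would show that this same configuration of $h$-rays, when ``drawn shut'' against the $f_{p/q}$-side instead of the $g_{p/q}$-side, produces a cluster in which the critical point component of $f_{p/q}$ is \emph{not} adjacent to the critical value component of $h$, but rather sits at combinatorial distance $2q-1$; this is where one compares the cyclic order of arms around the period two point of $f_{p/q}$ (given by the Hubbard tree / the ordering of the $\theta_i$) with that around the period two point of $g_{p/q}$ and observes the ``reversal'' caused by the critical value component of $f_{p/q}$ lying on the arc $(p_1,p_2)$ whereas for $g_{p/q}$ the critical \emph{point} component does. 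Fourth, I would run the symmetric argument with the roles of $1$ and $2q-1$ interchanged, and note the equivalence is an iff because both implications have been shown (or by a counting argument: for fixed $h$ there are exactly the two displacements available on each side, and they must be the two distinct values $1$ and $2q-1$).

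The main obstacle I expect is Step three: making precise the sense in which the cyclic ordering of critical orbit components around the cluster point ``reverses'' when passing from the secondary map to the double rabbit. Concretely, one must show that the pair of $h$-rays $\{-\theta, -(\theta+2)\}$ (forced to coincide when $\delta(G)=1$) lands at the root of the critical value component of $h$ precisely when, on the $f_{p/q}$-side, the critical value of $h$ appears $2q-1$ arms around from the critical point of $f_{p/q}$ in the anticlockwise ordering $\ell_0,\ell_1,\ldots,\ell_{2q-1}$ of Definition for the period two critical displacement. I would handle this by carefully drawing the ray-equivalence graph near the cluster point in both cases (the analogues of Figure~\ref{f:critpic2}), reading off the cyclic order of root points of the $f_{p/q}$-components and the $h$-components as they interleave around $p_1$, and using the fact that the critical value component of $f_{p/q}$ occupies the slot \emph{immediately clockwise} of where the critical \emph{point} component of $g_{p/q}$ sat — the one structural difference between the two Hubbard trees — which swaps ``adjacent to the critical value'' (distance $1$) with ``adjacent on the far side'' (distance $2q-1$). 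Everything else is bookkeeping with angles mod $2^{2q}-1$ and the already-established lemmas of this section.
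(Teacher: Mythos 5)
Your overall strategy is essentially the paper's: translate a given critical displacement for $G\cong g_{p/q}\Perp h$ into a statement about which pair of $h$-rays land together (the dictionary from the end of Section~\ref{secondmap}), and then read off the displacement of $F\cong f_{p/q}\Perp h$ from the resulting ray-equivalence-class picture, using the cyclic order of arms around the period two point. That is exactly how the paper proves the forward implication ($\delta_G=1\Rightarrow\delta_F=2q-1$ and its symmetric counterpart).

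Where you differ, and where you should be more careful, is the converse implication. You propose to get the ``iff'' either by symmetry or by a counting argument (``$\delta_G\in\{1,2q-1\}$ on the $g$-side, so the forward implications pin down a bijection''). This counting argument is sound \emph{only if one already knows that $G\cong g_{p/q}\Perp h$ is a rational map with a period two cluster cycle}. But that is precisely what is not free: as the paper emphasises immediately after Corollary~\ref{c:bothmatings}, the converse of that corollary is false, so the existence of $F$ does not by itself yield the existence of $G$. For this reason the paper's proof of the converse direction does genuine work: assuming $\delta_F=1$, it locates the ray $R_h(-(\theta+3))$ at the root of $V_2$ and then, by eliminating the alternatives ($\phi\neq -(\theta+2)$ would force $V_2$ to be the critical value component, $\phi\neq-\theta$ would create a loop, and $\phi>-(\theta+1)$ would separate $R_h(-(\theta+1))$ from the critical orbit), shows that the partner ray is forced to be $R_h(-(\theta+1))$. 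This reconstructs the ray class of $g_{p/q}\Perp h$ explicitly, which simultaneously proves that $G$ exists and that $\delta_G=2q-1$. Your plan should include this step rather than outsourcing it to a counting argument.

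One smaller point: in your description of the crux you say the pair $\{-\theta,-(\theta+2)\}$ lands at ``the root of the critical value component of $h$.'' It does not; as in the proof of Proposition~\ref{l:critdisp}, the common landing point is the root of the component $V_2$ containing $h^{\circ 2}(0)$ (the sector bounded by these two rays has angular width $2$, so its preimage has width $1$ and contains the critical value, making $V_2$ the image of the critical value component). Getting this right matters for reading off the correct cyclic position of the $h$-components around the cluster point when you switch from $g_{p/q}$ to $f_{p/q}$.
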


\begin{proof}
By the observation at the end of the previous section, we see that if $G \cong g_{p/q} \Perp h$ has critical displacement $1$ then the rays $R^h_{-\theta}$ and $R^h_{-(\theta+2)}$ land at the same point on $J(h)$, namely the root point of $U$, the Fatou component which contains $h^{\circ 2}(0)$. So when we carry out the mating $f_{p/q} \Perp h$, we see that since $R^h_{-\theta}$ lands on the root point of $U$, the component $U$ will lie clockwise of the critical value component of the critical point belonging to $f_{p/q}$. This means that the critical displacement will be $2q-1$, as required. The case for critical displacement $2q-1$ is similar.

Conversely, by the fact that the branches of the ray equivalence classes $[p_1]$ and $[p_2]$ are all homeomorphic, it only is necessary to show that if $F \cong f_{p/q} \Perp h$ has a critical displacement of $1$, then the ray class for $g_{p/q} \Perp h$ is as in Figure~\ref{f:critpic2}. By assumption, the ray $R_h(-(\theta +3))$ must land on the root point $r$ of the critical orbit component of $h$ containing $h^{\circ 2}(0)$, which we denote $V_2$. We must show that the partner ray is $R_{h}(-(\theta+1))$. So let the partner ray be $R_h(\phi)$. Clearly, $\phi \neq -(\theta+2)$ since then $V_2$ would have to be the critical value component and $\phi \neq -\theta$ since this would create a loop in the ray equivalence class, meaning the mating would be obstructed. The union $R_{h}(-(\theta+3)) \cup R_{h}(\phi)$ separates $h^{\circ 2}(0)$ from the rest of the critical orbit of $h$ (this is easy to see by, for example, considering the Hubbard tree of $h$). In particular, if $\phi > -(\theta+1)$ then we must have $-(\theta+3) < -(\theta + 1) < \phi$, meaning the ray $R_{h}(-(\theta+1))$ will not land on the root point of a critical orbit Fatou component of $h$. But this would contradict $F$ having a period two cluster cycle, so we see that $\phi = -(\theta+1)$. In particular this means that the ray class is as in Figure~\ref{f:critpic2} and we are done.
\end{proof}

We now have all the information we need to start enumerating examples, save for a simple calculation as was promised after the definition of the period two critical displacement. 

\begin{lem}
  Suppose the bicritical rational map $F$ with labelled critical points has a period two cluster cycle with critical data $(p/q,\delta)$. Then under the alternative labelling it has combinatorial data $(p/q,2p-\delta)$.
\end{lem}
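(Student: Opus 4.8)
The plan is to express both critical displacements as anticlockwise combinatorial distances inside one and the same star, and then to add them.

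Write the two critical orbits as $c_{1}=x_{0}\mapsto x_{1}\mapsto\cdots\mapsto x_{2q-1}\mapsto x_{0}$ and $c_{2}=y_{0}\mapsto y_{1}\mapsto\cdots\mapsto y_{2q-1}\mapsto y_{0}$, and let $X_{1},X_{2}$ be the stars of $\mathcal{C}_{1},\mathcal{C}_{2}$. Since $F$ interchanges the two clusters, $\mathcal{C}_{1}$ carries $x_{0},x_{2},\dots$ together with $y_{1},y_{3},\dots$ while $\mathcal{C}_{2}$ carries $x_{1},x_{3},\dots$ together with $y_{0},y_{2},\dots$, so each star has $q$ arms from each orbit and $2q$ arms in all. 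The cluster point of $\mathcal{C}_{i}$ is a period-$2$ point of $F$ in $J(F)$, hence not a critical point, so $F$ is a local homeomorphism there; consequently $F$ restricts to orientation-preserving homeomorphisms $X_{1}\to X_{2}$ and $X_{2}\to X_{1}$, each permuting the $2q$ arms while respecting the anticlockwise cyclic order. For a star $X$ and two of its arms with endpoints $z,w$, write $d_{X}(z,w)$, a residue modulo $2q$, for the number of arms passed going anticlockwise from the arm at $z$ to the arm at $w$. By definition $\delta=d_{X_{1}}(c_{1},F(c_{2}))$; since $F\colon X_{1}\to X_{2}$ sends the arm at $c_{1}$ to the arm at $F(c_{1})$, the arm at $F(c_{2})$ to the arm at $F^{\circ 2}(c_{2})$, and preserves cyclic order, this transports to $\delta=d_{X_{2}}(F(c_{1}),F^{\circ 2}(c_{2}))$, which is exactly the reformulation of $\delta$ as the distance between $F(c_{1})$ and $F^{\circ 2}(c_{2})$ announced after the period-two definition. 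On the other hand, the alternative labelling swaps $c_{1}\leftrightarrow c_{2}$ and $\mathcal{C}_{1}\leftrightarrow\mathcal{C}_{2}$ (this leaves the combinatorial rotation number $p/q$ unchanged, as Definition~\ref{d:CRN} is insensitive to the labelling), so its critical displacement is $\delta'=d_{X_{2}}(c_{2},F(c_{1}))$. Concatenating the two anticlockwise arcs at the arm of $F(c_{1})$ gives $\delta+\delta'\equiv d_{X_{2}}(c_{2},F^{\circ 2}(c_{2}))\pmod{2q}$.

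It remains to show $d_{X_{2}}(c_{2},F^{\circ 2}(c_{2}))\equiv 2p\pmod{2q}$, i.e.\ that the first return map $F^{\circ 2}\colon X_{2}\to X_{2}$ rotates the cyclic set of $2q$ arms by exactly $2p$. Being an orientation- and cyclic-order-preserving bijection of the $2q$ arms, $F^{\circ 2}$ is a rotation, say by $s$; numbering positions so that the arm at $c_{2}$ is position $0$ gives $d_{X_{2}}(c_{2},F^{\circ 2}(c_{2}))=s$. By Definition~\ref{d:CRN}, on the $q$ arms of $X_{2}$ coming from one critical orbit $F^{\circ 2}$ acts as the rotation by $p/q$, hence with order exactly $q$ there (and likewise on the $q$ arms from the other orbit), so $(F^{\circ 2})^{\circ q}$ fixes every arm and therefore $sq\equiv 0\pmod{2q}$; thus $s=2t$ with $\gcd(t,q)=1$. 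The key point is now that the $2q$ arms of $X_{2}$ alternate between the two critical orbits: the $2$-colouring of the positions by ``belongs to the orbit of $c_{1}$'' versus ``belongs to the orbit of $c_{2}$'' is preserved by $F^{\circ 2}$, hence invariant under the rotation by $s=2t$; since $\gcd(2t,2q)=2$, that rotation generates the subgroup of all even rotations, so the colouring is invariant under the rotation by $2$, and as each colour has exactly $q$ arms and there are exactly $q$ even positions, one colour occupies precisely the even positions and the other the odd ones. Consequently $c_{2}=y_{0}$ lies at an even position and $F^{\circ 2}(c_{2})=y_{2}$ lies $2p$ positions anticlockwise from it, because among the $q$ arms of its own orbit the arm at $y_{2}$ is $p$ steps anticlockwise from the arm at $y_{0}$. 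Hence $s=2p$, and combining with the previous paragraph $\delta+\delta'\equiv 2p\pmod{2q}$, which is the assertion.

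The bookkeeping with the homeomorphisms $F\colon X_{i}\to X_{3-i}$ is routine; the one genuinely substantive step, and the one I expect to be the main obstacle, is establishing that the arms of the two critical orbits alternate around each cluster point. The argument above does this using only that $F^{\circ 2}$ acts as a single rotation on all $2q$ arms and simultaneously as a $p/q$-rotation on the $q$ arms of either orbit separately; an alternative, if preferred, would be to read the alternation off the combinatorial description of the Hubbard trees already recorded in this section.
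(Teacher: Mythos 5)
Your argument is correct and follows essentially the same strategy as the paper's four-line proof: transport the critical displacement to the star $X_2$ and identify the discrepancy between $\delta$ and $\delta'$ with the rotation of the full $2q$-arm star under the first return map $F^{\circ 2}$. The paper silently treats this rotation as $2p$ (in the step ``since the combinatorial rotation number is $p/q$''), whereas you make explicit and prove the underlying fact --- that the arms of the two critical orbits alternate around the cluster point, so the full-star rotation is exactly twice the $p/q$-rotation on either orbit's $q$ arms.
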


\begin{proof}
  The critical displacement is by definition the combinatorial distance between $c_1$ and $F(c_2)$. By considering pre-images, this is the same as the distance between $F^{\circ (2q-1)}(c_1)$ and $c_2$. Since the combinatorial rotation number is $p/q$, we see that the distance between $F(c_1)$ and $c_2$ is  $\delta - 2p$. Hence the critical displacement when $c_2$ is taken to be the first critical point is $2p-\delta$.
\end{proof}

 We see now that it is possible to get 2-fold, 3-fold and even 4-fold shared matings in the period two case. For example, we have the following equivalences of matings in the period $8$, rotation number $1/4$ case. The notation is as follows:
\begin{align*}
  &f = f_{1/4} \text{ is the double rabbit corresponding to the angles $86/255$ and $89/255$.}\\
  &g = g_{1/4} \text{ is the secondary map corresponding to the angles $87/255$ and $88/255$.}\\
  &h_1 \text{ is the map corresponding to the angles $83/255$ and $84/255$.}\\
  &h_3 \text{ is the map corresponding to the angles $77/255$ and $78/255$.}\\
  &h_5 \text{ is the map corresponding to the angles $53/255$ and $54/255$.}\\
  &h_7 \text{ is the map corresponding to the angles $211/255$ and $212/255$.}
\end{align*}

\noindent With this notation, we have the following equivalences:
\begin{align*}
  &(\delta = 1) 	\quad	\quad	f \Perp h_1 \cong g \Perp h_7 \cong h_1 \Perp f \cong h_7 \Perp g  \\
  &(\delta = 3) 	\quad	\quad	f \Perp h_3 \cong h_7 \Perp f \cong h_1 \Perp g  \\
  &(\delta = 5) 	\quad	\quad	f \Perp h_5 \cong h_5 \Perp f   \\
  &(\delta = 7)		\quad	\quad	f \Perp h_7 \cong g \Perp h_1 \cong h_3 \Perp f
\end{align*}
Indeed, generally the smallest multiplicity of sharing is $2$, and the greatest is $4$. Using the notation above (so that $h_{\delta}$ is the polynomial such that $F \cong f_{p/q} \Perp h_{\delta}$ has combinatorial data $(\rho,\delta)$) we always have the equivalence
\begin{equation}\label{e:shared}
  f_{p/q} \Perp h_{\delta} \cong F \cong h_{2p-\delta} \Perp f_{p/q}.
\end{equation}
Further, as we saw above, it is also sometimes possible to construct a rational map with a period two cluster cycle using a mating with the map $g_{p/q}$. In some cases (as with the case $\delta = 1$ above), this can give us a shared mating with multiplicity $4$.

\begin{proof}[Proof of Theorem~\ref{t:shares}]
All combinatorial data can be obtained by Proposition~\ref{p:alldata2}. By the observation of equation~(\ref{e:shared}), each set of data can be obtained in at least two ways and as stated above, this data can be obtained with a mating with the map $g_{p/q}$ in at most two more ways. As shown in the calculation of the case where $\rho = 1/4$ , all the possible multiplicities can be obtained. 
\end{proof}

\section{Open questions}\label{s:openprobs}

We now ask how we may generalise the results of this paper. The first question is, even if we restrict ourselves to the quadratic case, what can we say about the matings that produce a map with a period $p$ cluster cycle. Some preliminary calculations are in \cite{Mythesis} and Saul Schleimer (\emph{personal communication}) has suggested  that Thurston equivalence for the period three case should be possible. It is always possible to create such a rational map with the mating with an ``order $p$'' rabbit (the higher order analogues to the rabbits and ``double rabbits'' of this paper), but there are in general $2^{p-1}-1$ ``secondary maps''in the period $p$ case, each of which having a period $p$ orbit with a well-defined combinatorial rotation number, and so being a viable candidate for a partipant in a mating that will create a rational map with a period $p$ cluster cycle. Such considerations may lead to a better understanding of parameter space: some considerations of the fixed case (and using parabolic, not hyperbolic maps) can be found in \cite{AdamBuffEcalle}. Moreover, the question remains how else can we create such rational maps and, assuming there are other ways, how prevalent are the shared matings in the higher period cases? Of course, any approach towards answering this question needs to resolve two questions. Firstly, what matings can admit a map with a period $p$ cluster cycle? Secondly, what is the analogue of the Thurston equivalence results as discussed in \cite{Thurstoneq}? Of course, an attempt to understand the higher degree cases would also be an interesting pursuit, since, as this and the sister paper show, even in the relatively benign period two case, the complications can cause even Thurston equivalence to fail in higher degrees.

\vspace{12pt}
\noindent\emph{Acknowledgements.} I would like to thank my PhD supervisor, Dr.~Adam Epstein, for all the help he has contributed towards my research and, in particular, this article. Furthermore, my thanks to Professors Mary Rees and Anthony Manning for providing a number of useful suggestions in the preparation of the manuscript. This research was funded by a grant from EPSRC.

\bibliographystyle{amsalpha}
\bibliography{papers}

\end{document}